\newtheorem{theo}{Theorem}[section]
\newtheorem{lemma}{Lemma}[section]
\newtheorem{prop}{Proposition}[section]
\newtheorem {remark} {Remark} [section]
\numberwithin{equation}{section}
\def\sommaire{\@restonecolfalse\if@twocolumn\@restonecoltrue\onecolumn
\fi\chapter*{Sommaire\@mkboth{SOMMAIRE}{SOMMAIRE}}
  \@starttoc{toc}\if@restonecol\twocolumn\fi}
\def\thebibliographie#1{\chapter*{Bibliographie\@mkboth
  {BIBLIOGRAPHIE}{BIBLIOGRAPHIE}}\list
  {[\arabic{enumi}]}{\settowidth\labelwidth{[#1]}\leftmargin\labelwidth
  \advance\leftmargin\labelsep
  \usecounter{enumi}}
  \def\newblock{\hskip .11em plus .33em minus .07em}
  \sloppy\clubpenalty4000\widowpenalty4000
  \sfcode`\.=1000\relax}
\def\references#1{\section*{R\'ef\'erences\@mkboth
  {R\'EF\'ERENCES}{R\'EF\'ERENCES}}\list
  {[\arabic{enumi}]}{\settowidth\labelwidth{[#1]}\leftmargin\labelwidth
  \advance\leftmargin\labelsep
  \usecounter{enumi}}
  \def\newblock{\hskip .11em plus .33em minus .07em}
  \sloppy\clubpenalty4000\widowpenalty4000
  \sfcode`\.=1000\relax}
\def\inte#1{
\displaystyle\mathop{#1\kern0pt}^\circ
}
\newcommand{\beq}{\begin{equation}}
\newcommand{\eeq}{\end{equation}}
\newcommand{\ben}{\begin{eqnarray}}
\newcommand{\een}{\end{eqnarray}}
\newcommand{\beno}{\begin{eqnarray*}}
\newcommand{\eeno}{\end{eqnarray*}}
\let\al=\alpha
\let\d=\delta
\let\e=\varepsilon
\let\r=\rho
\let\f=\phi
\let\wh=\widehat
\def\cK{{\mathcal K}}
\def\cR{{\mathcal R}}
\def\virgp{\raise 2pt\hbox{,}}
\def\cdotpv{\raise 2pt\hbox{;}}
\def\eqdefa{\buildrel\hbox{\footnotesize def}\over =}
\def\im {\mathop{\rm Im}\nolimits}
\def\C{\mathop{\mathbb C\kern 0pt}\nolimits}
\def\DD{\mathop{\mathbb D\kern 0pt}\nolimits}
\def\EE{\mathop{\mathbb E\kern 0pt}\nolimits}
\def\K{\mathop{\mathbb K\kern 0pt}\nolimits}
\def\N{\mathop{\mathbb  N\kern 0pt}\nolimits}
\def\Q{\mathop{\mathbb  Q\kern 0pt}\nolimits}
\def\R{{\mathop{\mathbb R\kern 0pt}\nolimits}}
\def\SS{\mathop{\mathbb  S\kern 0pt}\nolimits}
\def\St{\mathop{\mathbb  S\kern 0pt}\nolimits}
\def\Z{\mathop{\mathbb  Z\kern 0pt}\nolimits}
\def\ZZ{{\mathop{\mathbb  Z\kern 0pt}\nolimits}}
\def\H{{\mathop{{\mathbb  H\kern 0pt}}\nolimits}}
\def\PP{\mathop{\mathbb P\kern 0pt}\nolimits}
\def\TT{\mathop{\mathbb T\kern 0pt}\nolimits}
\def\pa{\partial}
\newcommand{\la}{\lambda}
\newcommand{\andf}{\quad\hbox{and}\quad}
\newcommand{\with}{\quad\hbox{with}\quad}
\def\pr{\psi^\hbar}
\def\im{{\rm i}}
\def\bg{{\vec g}}
\def\bv{{\vec v}}
\begin{document}

\title[Global controllability of the semiclassical NLS] {On the global approximate controllability  in small time of semiclassical 1-D Schrödinger equations between two states with positive quantum densities}

\author[J. M. Coron]{Jean-Michel  Coron}
\address[J.-M. Coron]%
{Sorbonne Universit\'{e}, Universit\'{e} Paris-Diderot SPC, CNRS, INRIA, Laboratoire Jacques-Louis Lions, LJLL,  \'{e}quipe CAGE, F-75005 Paris.}
\email{coron@ann.jussieu.fr}
\author[S. Xiang]{Shengquan Xiang}
\address[S. Xiang]%
{B\^{a}timent des Math\'{e}matiques, EPFL, Station 8, CH-1015 Lausanne, Switzerland.}
\email{shengquan.xiang@epfl.ch}
\author[P. ZHANG]{Ping Zhang}%
\address[P. Zhang]
{Academy of
Mathematics $\&$ Systems Science and  Hua Loo-Keng Key Laboratory of
Mathematics, The Chinese Academy of Sciences, China, and School of Mathematical Sciences, University of Chinese Academy of Sciences, Beijing 100049, China.}
\email{zp@amss.ac.cn}
\subjclass[2010]{}
\keywords{}
\begin{abstract} In this paper, we study, in the semiclassical  sense,  the global approximate controllability  in small time of the quantum density and quantum momentum of the 1-D semiclassical cubic Schrödinger equation with two controls between two states with positive quantum densities. We first control the asymptotic expansions of the zeroth
and first order of the physical observables via Agrachev-Sarychev's method.
 Then we conclude the proof  through techniques of semiclassical
approximation of the nonlinear Schr\"{o}dinger equation.
 \end{abstract}

\maketitle

\noindent {\sl Keywords:} the semiclassical Schr\"{o}dinger equation, quantum density, quantum momentum, controllability, semiclassical limit.

\vskip 0.2cm
\noindent {\sl AMS Subject Classification (2000):
35Q55,  	
35C20,  	
93C20. 	
}   \
\setcounter{equation}{0}

\section{Introduction}
The purpose of this paper is to investigate the global approximate controllability of the physical observables of the
 following one-dimensional cubic Schr\"{o}dinger equation in the semiclassical regime
\begin{equation}\label{S1eq1}
\left\{\begin{array}{l}
\displaystyle i\hbar\pa_t \pr= -\frac{\hbar^2}2\pa_x^2\pr+\left(F^\hbar+(|\pr|^2-1)\right)\pr,  \qquad (t,x)\in\R^+\times{\Bbb T} \, ,
 \\
\displaystyle  \pr|_{t=0}=\pr_0=a^0(x;\hbar)\exp\left({i}S(x)/{\hbar}\right) \, ,
\end{array}\right.
\end{equation}
where $\pr$ denotes the wave function in quantum mechanics so that $\int_{\TT}|\pr(t, x)|^2\,dx=1$, $\hbar$ denotes
the Planck constant, and $\Bbb T:=\mathbb{R}/2\pi \mathbb{Z}$. The function $\pr$ is complex-valued while the functions
 $S(x)$  and $F^{\hbar}$ are real-valued.  The potential $V\eqdefa F^\hbar+(|\pr|^2-1)$  is  composed by two parts: $(|\pr|^2-1)$ corresponds to the Gross-Pitaevskii equations, and  $F^\hbar$ denotes a background charge that can be controlled in semiconductor applications.  In control theory, typically for the case $\hbar=1$,  the term $F^\hbar \pr$ is called bilinear control.

We  assume that, for some positive integer $k$, 
\begin{equation}\label{cond-reg-be}
\begin{split}
S(x)\in H^{k+3},\ \ a^0(x;\hbar)&=a^0_0(x)+ \hbar a^0_1(x)+\hbar R(x; \hbar)\with\\
  a^0_0(x)\in H^{k+2},\ \ a^0_0(x)>0, &\ \ a^0_1(x)\in H^k\   \andf \lim_{\hbar\rightarrow 0^+} \lVert R(x; \hbar) \lVert_{H^k}=0.
\end{split}
\end{equation}

 For \eqref{S1eq1},
the well-known Madelung transform (see \cite{1927-Madelung-ZP}) introduces
two real variables $\rho^\hbar (t, x)$ and $S^\hbar (t, x)$ such that $\psi^\hbar =
\sqrt{\rho^\hbar}\exp\left({i}S^\hbar/{\hbar}\right).$ Then we can equivalently rewrite \eqref{S1eq1} as
\begin{equation}\label{S1eq1a}
\left\{\begin{array}{l}
\displaystyle \pa_t\rho^\hbar+\pa_x\left(\rho^\hbar u^\hbar\right)=0 \qquad (t,x)\in\R^+\times{\Bbb T} \, ,
 \\
 \displaystyle \pa_t\left(\rho^\hbar u^\hbar\right)+\pa_x\left(\rho^\hbar(u^\hbar)^2\right)+\frac12\pa_x(\rho^\hbar)^2+\pa_x\left(F^\hbar\right)\rho^\hbar=\frac{\hbar^2}2\rho^\hbar
 \Bigl(\frac{\pa_x^2\sqrt{\rho^\hbar}}{\sqrt{\rho^\hbar}}\Bigr)_x,\\
\displaystyle  \rho^\hbar|_{t=0}=|a^0(x,\hbar)|^2,\quad u^\hbar|_{t=0}=\pa_xS(x),
\end{array}\right.
\end{equation}
with
\begin{equation}
u^\hbar (t, x)\eqdefa \pa_x S^\hbar (t, x).
\end{equation}
The right-hand side of the second equation of \eqref{S1eq1a} is called the quantum pressure.

 Formally, by taking $\hbar \to 0$ in \eqref{S1eq1a}, we
 obtain the following compressible Euler equation in $\R^+\times{\Bbb T}$ for  quantities $(\r_0, u_0)$ with an additional force $\eta_0(t, x)$:
\begin{equation}\label{S1eq6a}
\left\{\begin{array}{l}
\displaystyle  \pa_t\r_0+\pa_x(u_0\r_0)=0,\\
\displaystyle \pa_t u_0+u_0\pa_xu_0+\pa_x\r_0= \eta_0,
 \\
\displaystyle (\r_0,u_0)|_{t=0}=(|a^0_0|^2,\pa_xS),
\end{array}\right.
\end{equation}
where
\begin{equation}
\eta_0 \eqdefa -\pa_x F_0,
\end{equation}
and where the quantities $\rho_0, u_0, F_0$ correspond to the leading order of $\rho^{\hbar}, u^{\hbar}, F^{\hbar}$ under WKB sense.
We refer to
Section \ref{sec-sub-gset}, especially to  Equation \eqref{S1eq5}--\eqref{S1eq6b}  for details of this expansion and even for the related higher order expansions.\\

Since $u_0$ is the derivative of some function $S_0$ on the torus ${\Bbb T}$, it satisfies $\int_{\mathbb{T}} u_0\,dx=0$.
 For this purpose, we denote by $H^s_0$  the codimension one  subspace of $H^s$ such that its elements satisfy $\int_{\mathbb{T}} f\,dx=0$.

Corresponding to the Newtonian mechanics, we call $\rho^\hbar$ and $\rho^\hbar u^\hbar$ the quantum density and quantum momentum respectively.
In the whole space case and if there is no superfluid at infinity and  $f'(0)>0$, Grenier  \cite{1998-Grenier-PAMS} solved the limit
problem before the formation of singularity in the limit system  with initial data in Sobolev spaces. The main
idea in  \cite{1998-Grenier-PAMS} is to use the symmetrizer of the limit system \eqref{S1eq6a} to get $H^s$ energy
estimates which are uniform in $\e$ for a singularly perturbed system. Nevertheless  this method does not work for the semiclassical limit
of  Schr\"odinger-Poisson equations, as the resulting limit
system is not a symmetric hyperbolic one.   The third author of this manuscript \cite{2002-Zhang-SIAMJMA}  used the Wigner measure approach  to study the semiclassical
limit of Schr\"odinger-Poisson equation (see  \cite{2002-Zhang-JPDE} for more general nonlinearity).
 The assumption that $f'(0)>0$ in \cite{2002-Zhang-JPDE} was removed by Alazard and Carles in \cite{AC09}. With  superfluid at infinity to pass over an obstacle, the
corresponding problem was solved by Lin and Zhang in \cite{2006-Lin-Zhang-ARMA}.\\

The goal of this paper is to find control $F^\hbar$
\begin{equation}
F^\hbar(t,x)=F_0(t, x)+ \hbar F_1(t, x) \textrm{ with }  F_i(t, x)\in C([0, T]; E),
\end{equation}
where
\begin{equation}
E\eqdefa {\rm span}\{\sin x, \cos x\} \textrm{ is a two-dimensional control space }
\end{equation}
 such that we are able  to get
 the approximate controllability of the physical observables $\rho^\hbar$ and $u^\hbar.$

More precisely, our main result states as follows.

\begin{theo}\label{th1.1}
{\sl Let $k\geq 3, T>0$ and $\varepsilon>0$. Let $(\r_0^f, u_0^f)\in H^k\times H^{k}_0$ and $(\r_1^f, u_1^f)\in H^{k-2}\times H^{k-2}_0$, which satisfy
\begin{equation}
\int_{\mathbb{T}} \r_0^f\,dx= \int_{\mathbb{T}}  |a_0^0|^2\,dx,\quad
\int_{\mathbb{T}} \r_1^f\,dx= \int_{\mathbb{T}} 2 \textrm{Re} (\bar{a}_0^0 a_1^0)\,dx \andf \r^f_0>0.  \notag
\end{equation}
 There exist  $(\hat\r_0, \hat u_0)\in H^{k+2}\times H^{k+2}_0, (\hat\r_1, \hat u_1)\in H^k\times H^{k}_0$   which satisfy
\begin{gather}
\int_{\mathbb{T}} \hat \r_0\,dx= \int_{\mathbb{T}}  \r_0^f\,dx,\quad \int_{\mathbb{T}} \hat \r_1\,dx= \int_{\mathbb{T}} \r_1^f\,dx \andf
\hat \r_0>0,  \notag\\
\lVert (\hat\r_0, \hat u_0)- (\r_0^f, u_0^f)\lVert_{H^k\times H^{k}}\leq \varepsilon, \quad \lVert (\hat\r_1, \hat u_1)- (\r_1^f, u_1^f)\lVert_{H^{k-2}\times H^{k-2}}< \varepsilon,\notag
\end{gather}
and controls $F^{\hbar}$ in $C^{\infty}([0, T]; E)$ such that,
the solution of equation \eqref{S1eq1} satisfies
\begin{align}
\psi^{\hbar}(T)=& \big(\hat a_0+ \hbar \hat a_1+ \hbar r^{\hbar}_a \big) \exp \Big(\frac{i}{\hbar} \big(\hat S_0+ \hbar \hat S_1+ \hbar r^{\hbar}_S\big)\Big)  \notag\\
=&\big(\hat a_0 e^{i \hat S_1}+ s^{\hbar}\big) \exp\big(\frac{i}{\hbar}\hat S_0\big),  \notag
\end{align}
with
\begin{gather}
|\hat a_0(x)|^2= \hat\r_0(x), \quad 2{\rm Re}(\bar{\hat{a}}_0 \hat a_1)= \hat\r_1(x),     \notag\\
\pa_x \hat S_0(x)= \hat u_0(x),\quad
\pa_x \hat S_1(x)= \hat u_1(x),    \notag\\
\lim_{\hbar\rightarrow 0^+} \lVert r^{\hbar}_a \lVert_{H^{k-2}}=0, \quad \lim_{\hbar\rightarrow 0^+} \lVert r^{\hbar}_S \lVert_{H^{k-1}}=0, \quad  \lim_{\hbar\rightarrow 0^+} \lVert s^{\hbar} \lVert_{H^{k-2}}=0.   \notag
\end{gather}}
\end{theo}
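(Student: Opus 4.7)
The plan is to decouple the problem into a semiclassical expansion step, a finite-dimensional control construction step, and a perturbation step.

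First, following the abstract, I would set up the WKB ansatz $\psi^\hbar = (a_0 + \hbar a_1 + \hbar R)\exp(iS^\hbar/\hbar)$ with $S^\hbar = S_0 + \hbar S_1 + \hbar r_S$, insert it into \eqref{S1eq1}, and identify the terms at orders $\hbar^0$ and $\hbar^1$. At order $\hbar^0$, letting $\rho_0 = |a_0|^2$ and $u_0 = \partial_x S_0$, one recovers the controlled compressible Euler system \eqref{S1eq6a} with force $\eta_0 = -\partial_x F_0$. At order $\hbar^1$, one obtains a system linearized around $(\rho_0,u_0)$ for $(\rho_1, u_1) := (2\mathrm{Re}(\bar a_0 a_1), \partial_x S_1)$, with source $\eta_1 = -\partial_x F_1$. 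These are precisely the systems referred to in Section~\ref{sec-sub-gset}, and the point is that both are driven by controls taking values in the two-dimensional space $-\partial_x E = \mathrm{span}\{\cos x, \sin x\}$.

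Next, I would apply the Agrachev-Sarychev extension method to obtain approximate controllability of the zeroth-order Euler system \eqref{S1eq6a} in $H^{k+2}\times H^{k+2}_0$ from $(|a_0^0|^2, \partial_x S)$ to some smooth $(\hat\rho_0, \hat u_0)$ close in $H^k$ to the target $(\rho_0^f, u_0^f)$, while keeping $\hat\rho_0 > 0$ along the trajectory. The scheme is the standard saturation argument: let $\mathcal{E}_0 = E$ and inductively define $\mathcal{E}_{n+1}$ as the span of $\mathcal{E}_n$ together with the nonlinear interactions arising from $\partial_x(\rho u)$, $u\partial_x u$ and $\partial_x \rho$; because $\{\sin x, \cos x\}$ generates all Fourier modes under multiplication, $\bigcup_n \mathcal{E}_n$ is dense in $H^{k+2}_0$. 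One then realizes the effect of any ``virtual'' high-mode control by driving the physical two-dimensional control with large-amplitude, highly oscillating profiles on subintervals of $[0,T]$; the Euler nonlinearity converts these oscillations into effective forcings in the higher modes, and the discrepancy between the virtual and physical trajectories is shown to be $O(1/N)$ by a Gronwall-type argument on the Euler energy estimate of Grenier. The positivity of $\hat\rho_0$ is preserved provided the final-time approximation error is smaller than $\min \rho_0^f$. Once $(\hat\rho_0, \hat u_0)$ and $F_0$ are fixed, the first-order system becomes a \emph{linear} inhomogeneous transport system for $(\rho_1, u_1)$; a shorter Agrachev-Sarychev-type argument (or directly a Hilbert Uniqueness Method argument exploiting the same saturation property) yields approximate controllability to the smoothed target $(\hat\rho_1, \hat u_1)$ via $F_1$.

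Finally, with $F_0, F_1 \in C^\infty([0,T];E)$ in hand, I would construct $\psi^\hbar$ by solving the genuine equation \eqref{S1eq1} and comparing with the WKB profile. Using Grenier's symmetrization (writing $\psi^\hbar = a^\hbar e^{iS^\hbar/\hbar}$ with $a^\hbar$ complex and $S^\hbar$ real, see \cite{1998-Grenier-PAMS}), one obtains uniform-in-$\hbar$ $H^s$ estimates on the remainder $R$ and $r_S$; injecting the ansatz $a^\hbar = \hat a_0 + \hbar \hat a_1 + \hbar R$, $S^\hbar = \hat S_0 + \hbar \hat S_1 + \hbar r_S$ with $\hat a_0, \hat a_1, \hat S_0, \hat S_1$ determined by the controlled profiles, the equations satisfied by $(R, r_S)$ are a quasilinear hyperbolic system whose source terms are $O(\hbar)$, and $\hat\rho_0 > 0$ ensures the symmetrizer is uniformly positive definite. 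This yields $\|R\|_{H^{k-2}}, \|r_S\|_{H^{k-1}} \to 0$ as $\hbar \to 0^+$ on $[0,T]$. The second representation of $\psi^\hbar(T)$ follows from a Taylor expansion of $e^{i(\hat S_1 + r_S)}$, with $s^\hbar = \hat a_0(e^{ir_S}-1)e^{i\hat S_1} + \hbar(\hat a_1 + R)e^{i(\hat S_1 + r_S)}$, which tends to $0$ in $H^{k-2}$.

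The hard part will be the Agrachev-Sarychev step for the compressible Euler system: unlike the parabolic or Navier-Stokes setting where the method is most developed, \eqref{S1eq6a} is hyperbolic, so large-amplitude oscillating controls risk producing shocks before the virtual trajectory is well approximated. The crux will therefore be to quantify the saturation iteration in a Sobolev norm high enough ($H^{k+2}$, to survive two derivative losses in the first-order system) while keeping the density positive on the whole interval $[0,T]$, for an arbitrarily small prescribed $T>0$. Rescaling $t \mapsto t/T$ to absorb the smallness of $T$ into the amplitude of $F_0$ is the natural device, but it makes the effective forcing large and the uniform Grenier estimate delicate, which is where most of the technical work will concentrate.
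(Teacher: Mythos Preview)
Your overall architecture (WKB expansion, control of the hierarchy, Grenier-type justification of the limit) matches the paper's, and your treatment of the zeroth-order Euler layer and of the semiclassical remainder is essentially the same as theirs. The gap is in your second step, where you propose to fix $F_0$ first and then control the first-order system $(\rho_1,u_1)$ separately as a \emph{linear} system driven by $\eta_1\in E$.

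The Agrachev--Sarychev saturation mechanism you invoke works by exploiting a quadratic nonlinearity: one trades a virtual control $\psi\in E_{n+1}$ for genuine controls in $E_n$ via identities of the form $\psi=\phi-\sum_i\varphi^i\partial_x\varphi^i$. But once $F_0$ is frozen, the equation for $u_1$ is $\partial_t u_1+\partial_x(u_0 u_1)+\partial_x\rho_1=\eta_1$ with $u_0$ \emph{given}; there is no term quadratic in the unknowns $(\rho_1,u_1)$, so no saturation mechanism is available, and a two-dimensional $\eta_1$ cannot reach dense targets by this route. Your alternative suggestion of HUM does not help either: for a hyperbolic system with a control of fixed finite rank there is no reason for approximate controllability to hold, and you give no argument. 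The paper's key point is precisely to avoid this decoupling: it treats the full cascade $(\rho_0,u_0,A,\rho_1,u_1)$ as a single controlled system and runs the saturation simultaneously on both levels. The convexification lemma (their Lemma~\ref{lem-fun}) produces, for any $\zeta_1\in E_{n+1}$, elements $\eta_1,\xi_0^i,\xi_1^i\in E_n$ with $\zeta_1=\eta_1-\sum_i\partial_x(\xi_0^i\xi_1^i)$; the bilinear term $\partial_x(\xi_0^i\xi_1^i)$ is exactly what appears when one perturbs \emph{both} $u_0$ and $u_1$ by oscillating $E_n$-valued profiles in the cross term $\partial_x((u_0+\xi_0)(u_1+\xi_1))$. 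In other words, the first-order layer is controlled through the zeroth-order control $F_0$ as much as through $F_1$, and the two cannot be chosen sequentially.

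Two smaller points. First, you do not mention the auxiliary quantity $A=\tfrac{i}{2}(\bar a_0\partial_x a_0-a_0\partial_x\bar a_0)$, which appears as the source in the $\rho_1$ equation and satisfies its own transport equation driven by $u_0$; the paper tracks it carefully because the zeroth-order Euler system is \emph{not} equivalent to the system for $(a_0^{\rm r},a_0^{\rm i},u_0)$. Second, the difficulty you anticipate (shock formation, time rescaling $t\mapsto t/T$) is not the one the paper faces: no rescaling is used, and $\rho_0$ is shown to stay uniformly close to a fixed positive trajectory throughout the saturation, so positivity and $H^{k+2}$ bounds are not the bottleneck. The genuine novelty, which your proposal misses, is the joint convexification for the coupled hierarchy.
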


\begin{remark}
The proof of Theorem \ref{th1.1} can in fact ensure the same result
 for  nonlinear Schr\"{o}dinger equations with more general nonlinearity:  $f(|\pr|^2)\pr$ which satisfies $f'(0)>0$.
\end{remark}
\begin{remark}
Let $N\geq 2$.  Using the same approach, we are able to   find control
\beno
F^\hbar(t,x)=\sum_{i=0}^N \hbar^i F_i(t, x)\with  F_i\in C([0, T]; E), \ i=1,\cdots,N, \eeno
 so that we can get the approximate controllability of the physical observables up to order $N.$
\end{remark}
\begin{remark}
Along the same line to the proof of Theorem \ref{th1.1}, we can work out  the same type of result in any space dimension, i.e. on $\mathbb{T}^d$ for any $d\in \mathbb{N}\setminus\{0\}$,  with controls be restricted to some finite dimensional space
\begin{equation}
\mathcal{E}:= span\{\sin (\vec l\cdot \vec x), \cos (\vec l\cdot \vec x); \; \vec  l\in \mathcal{K}_d\}.
\end{equation}
An example is
\begin{equation}
\mathcal{K}_d:= \{(l_1, ..., l_d); \;  l_j= 0 \textrm{ or }\pm 1\},
\end{equation}
which also has been used by Duca and Nersesyan \cite{Nersesyan-2021}  for the study of  approximate controllability between any pairs of eigenstates of NLS in $\mathbb{T}^d$.

\end{remark}
\begin{remark}
The study of the controllability of the Schr\"{o}dinger equation corresponds to the case that $\hbar=1$ in \eqref{S1eq1} has been attracted by many authors. Let us mention, in particular, the survey paper \cite{Laurent-2014} by Laurent and the references therein, \cite{Lebeau-1992} by Lebeau for  internal controllability of the multidimensional Schr\"{o}dinger equation using microlocal analysis. The local exact controllability of the one dimensional bilinear Schr\"{o}dinger equation was first proved by Beauchard \cite{Beauchard-2005-JMPA9} using a  Nash--Moser iteration process, and then simplified by Beauchard-Laurent in \cite{Beauchard-Laurent-2010}. Finally, in \cite{2012-Sarychev-MCRF} Sarychev also studied the approximate controllability of NLS in 1D and 2D torus based on Agrachev--Sarychev method, and again relying on this method  Duca and Nersesyan \cite{Nersesyan-2021} have studied the approximate controllability between any pairs of eigenstates of NLS in $\mathbb{T}^d$.
\end{remark}

\medskip

\setcounter{equation}{0}
\section{Sketch of the proof}

\subsection{Semiclassical approximation in the general setting}\label{sec-sub-gset}
Let us recall from \cite{1998-Grenier-PAMS} that instead of looking, as usual, for solutions $\pr$ to \eqref{S1eq1} of the form
\beno
\pr(t,x)=a^\hbar(t,x)\exp\left({i}S(t,x)/{\hbar}\right)
\eeno
with $S(t,x)$ independent of $\hbar$ and $a^\hbar(t,x)$ a real-valued function, we can look for solutions $\pr$ of \eqref{S1eq1} with the form:
\beq \label{S1eq4}
\begin{split}
\pr(t,x)=&a^\hbar(t,x)\exp\left(i S^\hbar(t,x)/\hbar\right)\with\\
a^\hbar(t,x)=& a_0(t, x)+ \hbar a_1(t, x)+ \hbar r_a^{\hbar}(t, x,) \\
 S^\hbar(t,x)=& S_0(t, x)+ \hbar S_1(t, x)+ \hbar r_S^{\hbar}(t, x),
\end{split}
\eeq where $a^\hbar$ is a complex-valued function.
By inserting \eqref{S1eq4} into \eqref{S1eq1}, we obtain
\begin{equation}\label{S1eq5}
\left\{\begin{array}{l}
\displaystyle \pa_t S^\hbar+\frac12\left(\pa_xS^\hbar\right)^2+F^\hbar+|a^\hbar|^2-1=0,
 \\
\displaystyle  \pa_ta^\hbar+\pa_xS^\hbar\pa_xa^\hbar+\frac12a^\hbar\pa_x^2S^\hbar=i\frac\hbar2\pa_x^2a^\hbar,\\
\displaystyle S^\hbar|_{t=0}=S(x),\quad a^\hbar|_{t=0}=a^0(x;\hbar).
\end{array}\right.
\end{equation}
Let us define
\begin{equation}
\eta_j(t, x)\eqdefa  -\pa_x F_j(t, x) \in E.  \notag
\end{equation}
Since $F_j\in E$ we can determine $F_j$ from $\eta_j$ by $$F_j (t,x)= -\int_0^x \eta_j (t, x) dx+(1/2\pi)\int_0^{2\pi}(2\pi-s)\eta_j(s)ds.$$
In view
  of \eqref{S1eq6a}, by comparing the coefficients of $\hbar^j$ on both sides of \eqref{S1eq5}, at least formally, we are able to obtain equations for the coefficients of the expansions in \eqref{S1eq4}.  The $\hbar^0$ order equation reads as
\begin{equation}\label{S1eq6}
\left\{\begin{array}{l}
\displaystyle \pa_t S_0+\frac12\left(\pa_xS_0\right)^2+|a_0|^2-1=-F_0(t, x),
 \\
\displaystyle  \pa_ta_0+\pa_xS_0\pa_xa_0+\frac12a_0\pa_x^2S_0=0,\\
\displaystyle S_0|_{t=0}=S(x),\quad a_0|_{t=0}=a^0_0(x).
\end{array}\right.
\end{equation}

Let us denote $\rho_0\eqdefa |a_0|^2$ and $u_0\eqdefa \pa_xS_0.$ We recover the compressible Euler system \eqref{S1eq6a}.
However,  \eqref{S1eq6} is not equivalent to \eqref{S1eq6a} though it is derived from the previous one.
 In fact, let us denote $a_0$ as $ a_0^{\rm r}+ia_0^\im$,  then \eqref{S1eq6} can be  reformulated as the following real-valued equation:
\begin{equation}\label{S1eq6b}
\left\{\begin{array}{l}
\displaystyle \pa_ta_0^{\rm r}+u_0\pa_xa_0^{\rm r}+\frac12a_0^{\rm r}\pa_xu_0=0,\\
\displaystyle \pa_ta_0^{\rm i}+u_0\pa_xa_0^{\rm i}+\frac12a_0^{\rm i}\pa_xu_0=0,\\
\displaystyle \pa_t u_0+u_0\pa_xu_0+\pa_x\r_0=\eta_0(t, x),
 \\
\displaystyle (a_0^{\rm r},a_0^{\rm i},u_0)|_{t=0}=({\rm Re}(a_0^0), {\rm Im}(a_0^0),\pa_xS).
\end{array}\right.
\end{equation}

 For $\hbar^1$ order of \eqref{S1eq5}, we get the following equation for $(a_1,S_1)$:
\begin{equation}\label{S1eq7}
\left\{\begin{array}{l}
\displaystyle \pa_t S_1+\pa_xS_0\pa_xS_1+2{\rm Re}\left(a_0\bar{a}_1\right)=-F_1(t, x),
 \\
\displaystyle  \pa_ta_1+\pa_xS_0\pa_xa_1+\pa_xS_1\pa_xa_0+\frac12\left(a_0\pa_x^2S_1+a_1\pa_x^2S_0\right)=\frac{i}2\pa_x^2a_0,\\
\displaystyle S_1|_{t=0}=0,\quad a_1|_{t=0}=a^0_1(x).
\end{array}\right.
\end{equation}
Again by  defining $a_1\eqdefa a_1^{\rm r}+ia_1^\im$ and $u_1\eqdefa \pa_xS_1$ we reformulate \eqref{S1eq7} as
\begin{equation}\label{S1eq7b}
\left\{\begin{array}{l}
\displaystyle \pa_ta_1^{\rm r}+u_0\pa_xa_1^{\rm r}+u_1\pa_xa_0^{\rm r}+\frac12\bigl(a_0^{\rm r}\pa_xu_1+a_1^{\rm r}\pa_xu_0\bigr)=-\frac12\pa_x^2a_0^{\rm i},\\
\displaystyle \pa_ta_1^{\rm i}+u_0\pa_xa_1^{\rm i}+u_1\pa_xa_0^{\rm i}+\frac12\bigl(a_0^{\rm i}\pa_xu_1+a_1^{\rm i}\pa_xu_0\bigr)=\frac12\pa_x^2a_0^{\rm r},\\
\displaystyle \pa_t u_1+u_0\pa_xu_1+u_1\pa_xu_0+2\pa_x\bigl(a_0^{\rm r}a_1^{\rm r}+a_0^{\rm i}a_1^{\rm i}\bigr)=\eta_1(t, x),
 \\
\displaystyle (a_1^{\rm r},a_1^{\rm i},u_1)|_{t=0}=({\rm Re}(a_1^0), {\rm Im}(a_1^0),0).
\end{array}\right.
\end{equation}

\subsection{On the control of physical observables}
Following the idea of \cite{1998-Grenier-PAMS}, for any time $T>0,$ it sounds interesting to let  the final state at time $T$ to
 approximate to $\bar a \exp (i \bar S/\hbar)$ with the help of some extra force as control, where $\bar a$ and $\bar S$ are target functions. Nevertheless, we are not able to control both $a^r_0$ and $a^i_0$ simultaneously.  Indeed it is easy to observe from the zeroth-order approximate
  system   \eqref{S1eq6b} that if ${\rm Im}(a^0_0)=0$, we  find that $a^i_0$ is always zero in the future.

On the other hand,
it is of physical importance and is natural to consider the controllability of the physical observables $\rho^\hbar$ and $u^\hbar.$ Hence, first
we are going to control the compressible Euler system \eqref{S1eq6a} through the control $\eta_0.$
 As for the first order approximation, we  observe from \eqref{S1eq7b} that
\beno
\begin{split}
\pa_t\left(\bar{a}_0a_1\right)+u_0\pa_x\left(\bar{a}_0a_1\right)+u_1\bar{a}_0\pa_x a_0+\frac12\r_0\pa_xu_1+\bar{a}_0a_1\pa_xu_0=\frac{i}2\bar{a}_0\pa_x^2a_0.
\end{split}
\eeno
Taking the real part of the above equation and defining  $\rho_1\eqdefa 2{\rm Re}(\bar{a}_0a_1)$ give rise to
\beq\label{S1eq24}
\pa_t\r_1+\pa_x\left(u_0\r_1+u_1\r_0\right)=\frac{i}2\pa_x\left(\bar{a}_0\pa_xa_0-a_0\pa_x\bar{a}_0\right).
\eeq
This leads to the following control system:
\begin{equation}\label{S1eq25}
\left\{\begin{array}{l}
\displaystyle \pa_t\r_1+\pa_x\left(u_0\r_1+u_1\r_0\right)=\frac{i}2\pa_x\left(\bar{a}_0\pa_xa_0-a_0\pa_x\bar{a}_0\right)
 \\
\displaystyle  \pa_tu_1+\pa_x\left(u_0u_1\right)+\pa_x\r_1=\eta_1.
\end{array}\right.
\end{equation}
As we mentioned before the compressible Euler system \eqref{S1eq6a} is not equivalent to the zeroth-order approximate system \eqref{S1eq6b}. Hence
in order to determine the right-hand side of the first equation of \eqref{S1eq25},  let us
define
\begin{equation}
A\eqdefa \frac{i}2\left(\bar{a}_0\pa_xa_0-a_0\pa_x\bar{a}_0\right).
\end{equation}
  Then it follows from \eqref{S1eq6b} that
  \begin{gather}\label{S1eq25a}
  \begin{cases}
  \pa_tA+u_0\pa_xA+2\pa_xu_0A=0,\\
  A(0)=A_0\eqdefa a_0^\im(0)\pa_xa_0^{\rm r}(0)-a_0^{\rm r}\pa_xa_0^\im(0).
  \end{cases}
  \end{gather}

Once $(\rho_0,u_0)$ is determined from \eqref{S1eq6a}, we can solve for $\left(a_0^{\rm r}, a_0^\im\right)$ via \eqref{S1eq6b}. Similarly, with
$(\rho_1,u_1)$ being determined, we can solve for $\left(a_1^{\rm r}, a_1^\im\right)$ via \eqref{S1eq7b}. Along the same line,
we can control the physical observables of any $k$-th order, and then solve for $\left(a_k^{\rm r}, a_k^\im\right).$  For a concise
presentation, we shall not pursue in this direction here.

Then the proof of Theorem \ref{th1.1} will be split in  two parts: the
controllability of the asymptotic expansions of the  physical quantities, and then the justification of the semiclassical approximation to the system \eqref{S1eq1}.

 By combining  the systems \eqref{S1eq6a}, \eqref{S1eq25} and \eqref{S1eq25a}, we obtain the following real valued controlled system in $[0,T]\times\TT$
 for the first two terms in the asymptotic expansions of physical quantities:
\begin{equation}\label{S2eq1}
\left\{\begin{array}{l}
\displaystyle  \pa_t\r_0+\pa_x(u_0\r_0)=0,\\
\displaystyle \pa_t u_0+u_0\pa_xu_0+\pa_x\r_0=\eta_0,\\
\displaystyle \pa_tA+u_0\pa_xA+2\pa_xu_0A=0,\\
\displaystyle \pa_t\r_1+\pa_x\left(u_0\r_1+u_1\r_0\right)=\pa_xA,
 \\
 \displaystyle  \pa_tu_1+\pa_x\left(u_0u_1\right)+\pa_x\r_1=\eta_1.
\end{array}\right.
\end{equation}
Let $\vec{\r}\eqdefa \begin{pmatrix}
\r_0\\
\r_1
\end{pmatrix}$ and $\vec{u}\eqdefa \begin{pmatrix}
u_0\\
u_1
\end{pmatrix}.$
Given initial state $\vec{\r}(0)=\vec{g}\eqdefa \begin{pmatrix}
g_0\\
g_1
\end{pmatrix}$ and $\vec{u}(0)=\vec{v}\eqdefa \begin{pmatrix}
v_0\\
v_1
\end{pmatrix},$  $A_0$ and the state at time $T$ that $\vec{\r}(T)=\wh{\vec{g}}\eqdefa \begin{pmatrix}
\wh{g}_0\\
\wh{g}_1
\end{pmatrix}$  and $\vec{u}(T)=\wh{\vec{v}}\eqdefa \begin{pmatrix}
\wh{v}_0\\
\wh{v}_1
\end{pmatrix},$  we are going to search for a control  $\vec{\eta}\eqdefa \begin{pmatrix}
\eta_0\\
\eta_1
\end{pmatrix}$ such that the solution of  system \eqref{S2eq1} satisfying the initial condition
\begin{equation}\label{eq-init}
\vec{\r}(0)=\vec{g}, \, \vec{u}(0)=\vec{v}
\end{equation}
and $(u_0, u_1, \r_0, \r_1)(T)$ is closed to $(\hat{v}_0, \hat{v}_1, \hat{g}_0, \hat{g}_1)$.

Let us remark here that we will not control the quantity $A$, though it  will be used to control the next order terms.

\subsection{Approximate controllability of the physical observables}
Therefore, we are going to control the limit system \eqref{S2eq1} for $(\vec u, \vec \rho, A)$. It is a complex system with cascade structure: $$(\rho_0, u_0)\rightarrow A\rightarrow (\rho_1, u_1).$$
\begin{itemize}
\item Firstly, $(\rho_0, u_0)$ is governed by the Saint-Venant equation with finite dimensional distributed control $\eta$ that acts on the ``velocity" part  $u_0$.  Hence, it is rather natural to expect controllability properties of this part.  Actually, in \cite{2011-Nersisyan-CPDE} Nersisyan has considered the approximate controllability of the  3D compressible Euler equations which is similar to this Saint-Venant equation, and we mimic his approach on the controllability of this part. \\
We comment here, as we shall see later on in Section  \ref{sec-cont-lim}, thanks to the ``transport" natur of the equation on $\rho_0$, during the whole controlling process the value of $\rho_0$ is always sufficiently close to some given trajectory that is strictly positive.
\item Next, $A$ verifies a transport equation that is influenced by the value of $u_0$. However, there is no direct control mechanism on this quantity: therefore it is indirectly controlled by $\eta_0$. \\
Moreover, we observe from the preceding section that the quantities $\rho_0, \rho_1$ and $A$  all come from $a^{\hbar}$. Hence it is reasonable to expect that they will share certain similarities. As it is actually directly  shown in Equation \eqref{S2eq1} that they all satisfy ``transport" type equations. Consequently, similar to $\rho_0$, the value of $A$ is always sufficiently close to some given trajectory. \\
Finally, we remark here that we are not going to control $A$ to some given final state $\tilde{A}$. Indeed, by looking at \eqref{S1eq6b} for $(a_0^r, a_0^i),$ we observe that both terms satisfy the same equation, as a  consequence, we are not able to control both items. If we further look at the definition of  $\rho_0$ and $A$, it becomes reasonable that we control $\rho_0$ as the ``density" part of $a_0$ and ``lose the control" of the ``rotation" part $A$.    But instead we will control the trajectory of $A(t)$ by making it to stay close to a given one. The value of $A(t)$ will be used to control $(\rho_1, u_1)$.
\item Finally, $(\rho_1, u_1)$ is a coupled systems with finite dimensional controlling terms that is also influenced by the value of $(\rho_0, u_0, A)$.  The main feature on the controllability of this part, different from well-posedness issue which is interested in the uniqueness and  existence issues, is that the value of $(\rho_0, u_0, A)$ also plays a significant role in the controlling process. Therefore, it should be regarded as a bilinear system and the influence of the  control term $\eta_0$ should not be ignored. This is actually one of the main novelties on the controllability part of this paper. \\
Similar to $\rho_0$ and $A$, the value of $\rho_1$ is also kept close to a constructed trajectory.
\end{itemize}

The main result lists as follows, while the detailed proof of which will be presented  in Section \ref{sec-cont-lim}.
\begin{theo}\label{thm-con-lim-main}
{\sl Let $k\geq 3, T>0,$ $\vec{g}\in H^{k+2}\times H^{k},$ $ \vec{v} \in H^{k+2}_0\times H^{k}_0,$ $\wh{\vec{g}}\in H^{k}\times H^{k-2}$  $\wh{\vec{v}}\in H^{k}_0\times H^{k-2}_0$ and
$A_0\in H^{k+1}$,  which satisfy
\beq
\int_{\TT}g_0\,dx=\int_{\TT}\wh{g}_0\,dx=\al_0,\quad \int_{\TT}{g}_1\,dx=\int_{\TT}\wh{g}_1\,dx=\al_1 \andf g_0, \wh{g}_0\geq c_0>0.  \notag
\eeq
Then for any $\e>0,$ there exist controls $\eta_0, \eta_1\in C^{\infty}([0, T]; E)$ such that the solution  $(u_0, u_1, \r_0, \r_1, A)(t)$ to the system \eqref{S2eq1}  satisfies the initial condition \eqref{eq-init} and
\begin{equation}
\lVert (u_0, u_1, \r_0, \r_1)(T)- (\hat{v}_0, \hat{v}_1, \hat{g}_0, \hat{g}_1)\lVert_{H^k\times H^{k-2}\times H^k\times H^{k-2}} \leq \varepsilon.  \notag
\end{equation}
}
\end{theo}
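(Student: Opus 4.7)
The plan is to exploit the cascade structure $(\rho_0,u_0) \to A \to (\rho_1,u_1)$ and reduce the problem to a sequence of controllability statements, handled by a variant of the Agrachev--Sarychev convexification procedure in the spirit of Nersisyan \cite{2011-Nersisyan-CPDE}. Because the coefficient $c_0$ enforces strict positivity of $\rho_0$, and because the target is only required up to an arbitrarily small $H^k$-error, I can allow myself to perturb every intermediate trajectory slightly in order to gain smoothness and keep $\rho_0$ bounded away from zero throughout $[0,T]$.

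First I would build a reference trajectory. Choose a smooth path $(\bar\rho_0,\bar u_0)(t)$ in $H^{k+2}\times H^{k+2}_0$ joining $(g_0,v_0)$ to some $(\tilde g_0,\tilde v_0)$ with $\lVert(\tilde g_0,\tilde v_0)-(\hat g_0,\hat v_0)\rVert_{H^k\times H^k}<\varepsilon/3$, with $\bar\rho_0\ge c_0/2>0$ and with $\int\bar\rho_0\,dx=\alpha_0$ (possible because the two endpoints share the same mean). This $(\bar\rho_0,\bar u_0)$ is in general not a solution of the Saint-Venant system, but defines a residual forcing
\begin{equation}
\bar\eta_0 \eqdef \pa_t\bar u_0 + \bar u_0\pa_x\bar u_0 + \pa_x\bar\rho_0,  \notag
\end{equation}
which is smooth in time with values in $H^{k+1}$. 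Using the Agrachev--Sarychev/Nersisyan scheme, I would then show that trajectories of the Saint-Venant system driven by controls $\eta_0\in C^\infty([0,T];E)$ are dense (in the $C([0,T];H^k)$-norm for $(\rho_0,u_0)$) in the set of curves whose associated residual lies in $L^\infty([0,T];H^{k+1})$. Concretely, one iteratively saturates the control space $E$ under the Lie brackets produced by the nonlinearity $u_0\pa_x u_0+\pa_x\rho_0$ to reach a dense subspace of the state space, exactly as in \cite{2011-Nersisyan-CPDE}; since $\rho_0$ satisfies a pure transport equation, a Gronwall-type estimate shows that $\rho_0$ stays arbitrarily close to $\bar\rho_0$ along the approximating trajectory, hence strictly positive.

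For the equation on $A$, which has no direct control, I would use the same approximation: once $u_0$ is chosen $H^k$-close to $\bar u_0$, the transport-dilation equation for $A$ gives that $A(t)$ stays $H^{k+1}$-close to the trajectory $\bar A(t)$ transported by $\bar u_0$ from $A_0$. Thus $A$ is not controlled to a target, but is kept on a known reference curve, which is precisely what is needed to close the argument for the second block. For $(\rho_1,u_1)$ I would proceed analogously: construct a smooth reference $(\bar\rho_1,\bar u_1)(t)$ joining $(g_1,v_1)$ to $(\tilde g_1,\tilde v_1)$ close to $(\hat g_1,\hat v_1)$, with the correct mean. Plugging $(\bar\rho_0,\bar u_0,\bar A,\bar\rho_1,\bar u_1)$ into the last two equations of \eqref{S2eq1} yields a residual $\bar\eta_1\in L^\infty([0,T];H^{k-1})$. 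Since, given the already-fixed $(\rho_0,u_0,A)$, the subsystem for $(\rho_1,u_1)$ is linear with bounded coefficients and a finite-dimensional control $\eta_1\in E$ acting on the velocity equation, the same Agrachev--Sarychev saturation argument (now in its simpler linear-in-state, nonlinear-in-control version) yields $H^{k-2}$-approximate controllability.

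The main obstacle is the simultaneity: the control $\eta_0$ must be chosen so that $(\rho_0,u_0)$ reaches its target and, \emph{with the same choice}, the trajectory of $A$ and the coefficients it produces in the $(\rho_1,u_1)$ system still admit a control $\eta_1\in E$ driving $(\rho_1,u_1)$ to the desired final state. The way to resolve this is to first fix the reference $(\bar\rho_0,\bar u_0,\bar A,\bar\rho_1,\bar u_1)$ globally, then apply Agrachev--Sarychev on the augmented unknown $(\rho_0,u_0,A,\rho_1,u_1)$ with control $(\eta_0,\eta_1)\in E\times E$: the residuals $(\bar\eta_0,\bar\eta_1)$ are approximated simultaneously, and continuous dependence of the flows of the three blocks on their respective coefficients ensures that the final state is $\varepsilon$-close to the target in the required norms. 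A Gronwall argument in the anisotropic norm $H^k\times H^{k-2}\times H^{k+1}\times H^k\times H^{k-2}$, combined with the strict positivity of $\rho_0$ along the reference trajectory, then yields the estimate claimed in Theorem \ref{thm-con-lim-main}.
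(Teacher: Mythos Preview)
Your overall strategy---build a smooth reference trajectory for the full augmented state, then apply Agrachev--Sarychev saturation to the pair $(\eta_0,\eta_1)\in E\times E$---is the same as the paper's. The paper carries this out in Section~\ref{sec-cont-lim}: Theorem~\ref{thm-cons-EN} produces a control in some large $E_N$ steering to the target (via exactly the reference-trajectory construction you describe), and Proposition~\ref{lem-E1-E} reduces $E_{m+1}$-controllability to $E_m$-controllability by convexification, oscillation, and extension.

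There is, however, a genuine gap in your treatment of the second block. You write that, once $(\rho_0,u_0,A)$ is fixed, the $(\rho_1,u_1)$ subsystem is ``linear-in-state, nonlinear-in-control'' and that Agrachev--Sarychev applies to $\eta_1$ alone. This is not correct: $\eta_1$ enters \emph{linearly} in the $u_1$-equation, so no saturation is generated by $\eta_1$ on its own. The directions you need in the $(\rho_1,u_1)$ block are produced by the \emph{bilinear} term $\pa_x(u_0 u_1)$: when one shifts $u_0\to u_0+\xi_0$ and $u_1\to u_1+\xi_1$ simultaneously (as the paper does in the extended system \eqref{S-eq-full-closed}), the cross-term $\pa_x(\xi_0\xi_1)$ appears and is what drives the saturation. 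The paper isolates this as Lemma~\ref{lem-fun}, which shows that any $(\zeta_0,\zeta_1)\in\mathbf E_{n+1}$ can be written as
\[
\zeta_0=\eta_0-\sum_i\xi_0^i\pa_x\xi_0^i,\qquad \zeta_1=\eta_1-\sum_i\pa_x(\xi_0^i\xi_1^i)
\]
with all ingredients in $\mathbf E_n$; note that the \emph{same} shifts $\xi_0^i$ must serve both identities, and this coupling is the main algebraic novelty. Your proposal does not identify this mechanism, and without it the saturation for $(\rho_1,u_1)$ does not close.

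A second missing ingredient is the oscillation/relaxation step. After convexification one has a $\xi$-shift only in the $u$-equations; to pass to a genuine control one must also insert the same $\xi$ into the transport equations for $\rho_0,\rho_1,A$ (the $\zeta$-slot in \eqref{S-eq-full-closed}). The paper's Theorem~\ref{thm-con} shows that rapidly oscillating $\zeta^n$ with vanishing time-averages leave the $Y^k$-solution unchanged in the limit; this is what allows the final ``extension'' step $\tilde w_n+\mu_n^m\mapsto w_n^m$ and the identification of an $E_m$-valued control. A Gronwall argument alone (as you suggest in your last paragraph) does not give this, since $\mu_n$ is only bounded, not small, in $L^2_T(H^{k+1})$.
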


\begin{remark}
One can observe that we have assumed the value of density $g_0, \hat g_0$ being strictly positive in order to  use the general well-posedness theory on symmetrizable hyperbolic systems. As we shall see later on, the constructed approximate solutions $\rho_0^{l}(t), l\in \{0, 1,..., N\}$ is always close to the first constructed solution $\rho_0^{N}(t)$ which is strictly positive.  It will be interesting and also challenging to consider about the case that $g_0, \hat g_0$ are not strictly positive, for example admit finite many points such that $g_0(x)$ equal to zero.
\end{remark}

We remark that the main idea of the proof of Theorem \ref{thm-con-lim-main} is directly motivated by \cite{2010-Nersisyan-COCV,2011-Nersisyan-CPDE} on the controllability of the incompressible and compressible Euler system using the Agrachev--Sarychev method, though we are dealing with a coupled system that is  more complicated.  This approach originates from \cite{Agrachev-Sarychev-2005-JMFM, Agrachev-Sarychev-2006-CMP} by Agrachev--Sarychev for the controllability of 2D Navier--Stokes and 2D Euler equations with finite-dimensional external control, and is further investigated  on many other models, such as  \cite{2012-Sarychev-MCRF, 2006-Shirikyan-CMP} and among others.  Shirikyan \cite{2018-Shirikyan-PAFA}   wrote an instructive introduction on the application of this method upon the viscous Burgers equation.  We refer to Section \ref{sec-cont-lim} on the detailed proof of Theorem \ref{thm-con-lim-main}, where we will also comment on each step  how we get inspired from this method.

Let us also point out  that this method emphasizing geometric control only leads to approximate controllability properties, while exact controllability can be only made on finite dimensional sets.  It is natural to ask whether some local controllability result may further lead to global exact controllability. When dealing with  boundary control or localized internal control  problems,   the exact controllability of the related models can be obtained using  other methods notably the ``return method" and ``Carleman estimates", see for example, \cite{coron, 1996-Coron-Fursikov-RJMP, MR2103189, Xiang-NS-2020} on Navier-Stokes equations, and  \cite{coron-1996-euler-return, 116,  2000-Glass-COCV, Glass-2007-JEMS} on Euler equations and similar Saint-Venant equations. However, there are two major difficulties that prevent us from getting local exact controllability of \eqref{S2eq1} using finite dimensional distributed controls. The first difficulty is related to the fact that  we are only  controlling  four of the five components of the state. The other difficulty is more systemic: it is reasonable to consider the moment theory for the linearized system of this nonlinear problem; however, so far, this method has not been adapted for first order quasi-linear hyperbolic systems. Indeed, a loss of derivative issue appears when we attempt to apply fixed point theorems, see \cite[Chapter 4.2]{coron} for a simplified model that presents the same difficulty. Therefore, it is required to prove exact (partial) controllability of the linearized systems around a class of time-varying trajectories, see for example in \cite[Lemma 17]{116} the author has used this strategy to overcome the loss of derivative issue for Saint--Venant equation with boundary control, for which the proof relies on  the fact that there are enough controllable linear systems  close to the non linear control system (thanks to characteristic lines) to get the local controllability of the nonlinear control system by a suitable fixed point method.   However,  whether this idea can be  adapted to distributed controlled systems by using the moment theory, it  still  remains open.

Finally, in view of the recent progresses in  stabilization problems, see for instance \cite{coron:hal-03161523, SaintVenantPI, Krieger-Xiang-2020}, it also sounds interesting to have a look at the related stabilization problems.

\subsection{Semiclassical limits}
Armed with Theorem  \ref{thm-con-lim-main}, we shall conclude the proof of Theorem \ref{th1.1} by

\begin{theo}\label{thm-sem-main}
{\sl Let $k> 5/2, T>0$. Suppose  that $ (\rho_0, u_0)\in C([0, T]; H^k)$ is the solution of
 \eqref{S1eq6a} determined by Theorem \ref{thm-con-lim-main}, then under the assumptions of Theorem \ref{th1.1},
  for  $\hbar$ small enough, \eqref{S1eq1} has a solution
   of the form \eqref{S1eq4} on the interval $[0, T], $ which  satisfies
\begin{align}
\psi^{\hbar}= a^{\hbar}\exp\big(\frac{i}{\hbar} S^{\hbar}\big)= \big(a_0+ \hbar a_1+ \hbar r_a^{\hbar}\big) \exp \Big(\frac{i}{\hbar} \big(S_0+ \hbar S_1+ \hbar r_S^{\hbar}\big)\Big),  \notag
\end{align}
with
\begin{gather}
  \textrm{$a^{\hbar}$ and $u^{\hbar}$ are uniformly bounded in $C([0, T]; H^k)$ as $\hbar\rightarrow 0^+$}   \notag\\
(a_0, S_0) \textrm{ the solution of } \eqref{S1eq6}, \textrm{ and } a_0, u_0\in C([0, T]; H^{k+2}), \notag\\
(a_1, S_1) \textrm{ the solution of } \eqref{S1eq7}, \textrm{ and }   a_1, u_1\in C([0, T]; H^{k}),   \notag \\
\lim_{\hbar\rightarrow 0^+} \lVert r^{\hbar}_a \lVert_{C([0, T]; H^{k-2})}=0,\; \lim_{\hbar\rightarrow 0^+} \lVert r^{\hbar}_S \lVert_{C([0, T]; H^{k-1})}=0.   \notag
\end{gather}}
\end{theo}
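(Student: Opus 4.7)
I will follow Grenier's strategy~\cite{1998-Grenier-PAMS} combined with a standard WKB expansion. The starting point is system~\eqref{S1eq5} for the pair $(a^\hbar, S^\hbar)$; differentiating the eikonal equation in $x$ and setting $u^\hbar := \pa_x S^\hbar$, $a^\hbar = a^{\hbar,\mathrm{r}} + i a^{\hbar,\mathrm{i}}$, the unknown $U^\hbar := (a^{\hbar,\mathrm{r}}, a^{\hbar,\mathrm{i}}, u^\hbar)^{\top}$ satisfies a closed quasi-linear system
\[
\pa_t U^\hbar + A(U^\hbar)\, \pa_x U^\hbar = \tfrac{\hbar}{2}\, J\, \pa_x^2 U^\hbar + G^\hbar, \qquad G^\hbar := (0, 0, -\pa_x F^\hbar)^{\top},
\]
where $A(U)$ has the structure read off from~\eqref{S1eq6b} and is symmetrized by the constant positive-definite matrix $\Sigma := \mathrm{diag}(4, 4, 1)$, while $J$ is the constant skew-symmetric matrix carrying the two dispersive entries $\pm \tfrac{\hbar}{2}\pa_x^2$; in particular $\Sigma J$ is skew-symmetric with constant coefficients.

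The first step is an $\hbar$-uniform $H^k$ a priori estimate on $[0,T]$. Applying $\pa_x^{\alpha}$ to the system and testing against $\Sigma \pa_x^{\alpha} U^\hbar$ in $L^2$, the skew-symmetry of $\Sigma J$ eliminates the dispersive contribution $(\pa_x^{\alpha} U^\hbar, \Sigma J \pa_x^{\alpha+2} U^\hbar)_{L^2} = 0$ for every $|\alpha| \leq k$; commutators $[\pa_x^\alpha, A(U^\hbar)]\pa_x U^\hbar$ are controlled by Moser tame estimates. This yields
\[
\frac{d}{dt} \|U^\hbar\|_{H^k}^2 \leq C \bigl(1 + \|U^\hbar\|_{H^k}\bigr) \|U^\hbar\|_{H^k}^2 + C\|F^\hbar\|_{C([0,T];H^{k+1})}^2,
\]
uniformly in $\hbar$. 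Since the initial datum $(\mathrm{Re}\,a^0, \mathrm{Im}\,a^0, \pa_x S)(\cdot; \hbar)$ is bounded in $H^k$ by~\eqref{cond-reg-be} and $F^\hbar$ is smooth, a standard bootstrap together with continuation of classical solutions yields the existence of $\psi^\hbar$ on $[0,T]$ of the form~\eqref{S1eq4}, uniformly in $\hbar$ for $\hbar$ small.

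The second step is the error estimate. Inserting $a^\hbar = a_0 + \hbar a_1 + \hbar r_a^\hbar$ and $S^\hbar = S_0 + \hbar S_1 + \hbar r_S^\hbar$ into~\eqref{S1eq5}, the $\hbar^0$ and $\hbar^1$ contributions vanish by construction of~\eqref{S1eq6} and~\eqref{S1eq7}. Setting $r_u^\hbar := \pa_x r_S^\hbar$ and $R^\hbar := (\mathrm{Re}\, r_a^\hbar, \mathrm{Im}\, r_a^\hbar, r_u^\hbar)^{\top}$, one obtains (after dividing by $\hbar$) a linear equation of the form
\[
\pa_t R^\hbar + A(U_{\mathrm{app}}^\hbar)\, \pa_x R^\hbar + B^\hbar R^\hbar = \tfrac{\hbar}{2}\, J\, \pa_x^2 R^\hbar + H^\hbar + \hbar\, \mathcal{Q}^\hbar(R^\hbar),
\]
where $U_{\mathrm{app}}^\hbar = U_0 + \hbar U_1$ is built from the profiles in~\eqref{S1eq6b}--\eqref{S1eq7b}, $\mathcal{Q}^\hbar$ is quadratic in $R^\hbar$ with smooth coefficients, and $H^\hbar$ is uniformly bounded in $H^{k-2}$, its worst term being $\tfrac{i}{2}\pa_x^2 a_1$ with $a_1 \in C([0,T]; H^k)$ (the profile regularity being supplied by Theorem~\ref{thm-con-lim-main}). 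Repeating the $\Sigma$-symmetrized energy estimate at the $H^{k-2}$ level, combined with $\|R^\hbar(0)\|_{H^{k-2}} \leq C\|R(\cdot;\hbar)\|_{H^k} \to 0$ from~\eqref{cond-reg-be} and Gronwall's lemma, gives $\|R^\hbar\|_{C([0,T]; H^{k-2})} \to 0$. The bound on $r_S^\hbar$ in $H^{k-1}$ then follows from $\pa_x r_S^\hbar = r_u^\hbar$ together with control of the spatial mean of $r_S^\hbar$ by direct integration in $x$ of the eikonal equation in~\eqref{S1eq5}.

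The main obstacle is the singular dispersive term $\tfrac{\hbar}{2} J \pa_x^2 U^\hbar$, whose naive treatment would leave an irrecoverable $\hbar^{-1}$ factor in the error analysis. Grenier's observation that $\Sigma J$ is skew-symmetric with constant coefficients removes this term identically from every symmetrized $H^s$ energy identity, so the system behaves just like a symmetrizable quasi-linear hyperbolic one and classical theory applies. The loss of two derivatives between $U^\hbar$ and the remainder $R^\hbar$ is intrinsic to the WKB framework and is precisely what fixes the final exponents $H^{k-2}$ and $H^{k-1}$ in the statement.
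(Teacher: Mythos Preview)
Your proposal has the right architecture—Grenier's constant symmetrizer $\Sigma$ and the skew-symmetry of $\Sigma J$ are exactly the mechanisms the paper exploits—but there is a genuine gap in your first step. The differential inequality you derive is cubic in $\|U^\hbar\|_{H^k}$, so Gronwall plus continuation only produces a uniform lifespan $T_0$ determined by the size of the data; nothing in that estimate sees that the \emph{limit} system admits a solution on all of $[0,T]$, and ``bootstrap'' alone cannot bridge $T_0$ to $T$. The paper separates this explicitly: the a priori estimate gives only a local-in-time uniform bound (their Step~1), and the extension to $[0,T]$ is obtained by comparing $U^\hbar$ with the zeroth-order profile $U_0$ (their Step~2). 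Setting $v^\hbar := U^\hbar - U_0$, the equation for $v^\hbar$ retains the same symmetric-plus-skew structure, and the new source $\tfrac{\hbar}{2}J\partial_x^2 U_0$ is bounded in $H^k$ precisely because $U_0 \in C([0,T];H^{k+2})$; this is where the two extra derivatives on the profile, supplied by Theorem~\ref{thm-con-lim-main}, are essential. Gronwall on the resulting linear-growth inequality for $v^\hbar$ then yields $\|v^\hbar\|_{C([0,T];H^k)} \le C\hbar$, which simultaneously extends the solution to $[0,T]$ and delivers the $O(\hbar)$ convergence rate.

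Your second step is a legitimate alternative to the paper's, but it too leans on the missing ingredient. The paper, having $\tilde v^\hbar:=v^\hbar/\hbar$ bounded in $C([0,T];H^k)$, reads off a bound on $\partial_t\tilde v^\hbar$ in $C([0,T];H^{k-2})$ from the equation and concludes by compactness: every limit point solves the linear system~\eqref{S1eq7b}, hence coincides with $(a_1^{\rm r},a_1^{\rm i},u_1)$ by uniqueness. Your direct $H^{k-2}$ energy estimate on $R^\hbar$ would also work, but to control either the quadratic term $\hbar\,\mathcal{Q}^\hbar(R^\hbar)$ or, equivalently, the full transport coefficient $A(U^\hbar)$ appearing in the commutator, you need $U^\hbar$ uniformly bounded in $H^k$ on $[0,T]$—which is precisely the output of the comparison argument your Step~1 skips.
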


\medskip

\section{The well-posedness and continuous dependence of the limit systems}
This section is devoted to the study of the controlled system \eqref{S2eq1}.
In what follows, we shall always use the convention that $\vec{f}=\left(f_1,\cdots,f_n\right)\in X$ means that
each component, $f_i,$ belongs to the space $X,$ and we designate $\|\vec{f}\|_X\eqdefa \sum_{i=1}^n\|f_i\|_X.$

Given  $\vec{\zeta}=\begin{pmatrix}
\zeta_0\\
\zeta_1
\end{pmatrix}$, $\vec{\xi}=\begin{pmatrix}
\xi_0\\
\xi_1
\end{pmatrix}$  and $\vec{\eta}=\begin{pmatrix}
\eta_0\\
\eta_1
\end{pmatrix},$     we  consider
\begin{equation}\label{S-eq-full-closed}
\left\{\begin{array}{l}
\displaystyle  \pa_t\r_0+\pa_x\bigl((u_0+\zeta_0)\r_0\bigr)=0,\\
\displaystyle \pa_t u_0+\bigl(u_0+\xi_0)\pa_x\bigl(u_0+\xi_0)+\pa_x\r_0=\eta_0,\\
\displaystyle \pa_tA+\bigl(u_0+\zeta_0\bigr)\pa_xA+2\pa_x\bigl(u_0+\zeta_0\bigr)A=0 \quad\mbox{in}\ \ [0,T]\times\TT,\\
\displaystyle \pa_t\r_1+\pa_x\left(\bigl(u_0+\zeta_0\bigr)\r_1+\bigl(u_1+\zeta_1\bigr)\r_0\right)=\pa_x A,
 \\
 \displaystyle  \pa_tu_1+\bigl(u_0+\xi_0)\pa_x\left(u_1+\xi_1\right)+\left(u_1+\xi_1\right)\pa_x\bigl(u_0+\xi_0)+\pa_x\r_1=\eta_1,\\
  \displaystyle \vec{\r}(0)=(g_0,g_1),\quad A(0)=A_0\andf \vec{u}(0)=(v_0,v_1),
\end{array}\right.
\end{equation}
where $\vec{\r}=(\r_0,\r_1)$ and $\vec{u}=(u_0,u_1).$

Let $U\eqdefa (v_0, v_1, g_0, g_1, A_0, \xi_0, \xi_1, \zeta_0, \zeta_1,  \eta_0, \eta_1)$, and
\beno
\begin{split}
X^k(T)\eqdefa& \{U\in H^k_0\times H^{k-2}_0\times H^k\times H^{k-2}\times H^{k-1}\times L^2_T(H^{k+1}_0)  \times L^2_T(H^{k-1}_0)\\
&\times  L^2_T(H^{k+1}_0)\times L^2_T(H^{k-1}_0)\times  L^2_T(H^{k}_0)\times L^2_T(H^{k-2}_0);\, g_0>0 \text{ in }  \mathbb{T}\};\\
Y^k(T)\eqdefa& \{(u_0, u_1, \rho_0, \rho_1, A)\in C([0,T];H^k_0)\times C([0,T];H^{k-2}_0)\times C([0,T];H^k) \\& \times C([0,T];H^{k-2})\times C([0,T];H^{k-1});\, \r_0>0 \text{ in } [0,\tilde T]\times \mathbb{T} \}.
\end{split}
\eeno
Let  us remark here that, concerning $U$, the natural convention should have been
$(g_0, v_0,  A_0, g_1, $ $ v_1,  \xi_0, \xi_1, \zeta_0, \zeta_1,  \eta_0, \eta_1)$.  In this paper we choose a different order  to simplify some notations for the presentation, for example we will denote $(v_0, v_1)$ by $\vec v$. The same remark holds for the use of $(u_0, u_1, \rho_0, \rho_1, A)$.

The main results state as follows:

\begin{theo}\label{th3.5}
{\sl Let $k \geq 3$. Given $U\in X^k(T)$ the system \eqref{S-eq-full-closed} has a unique solution
 $(\vec{u}, \vec{\r}, A)$ in $Y^k(T_0)$ for some $T_0\in(0,T].$ Furthermore, if for some $U^1\in X^k(T),$ the system \eqref{S-eq-full-closed} has a solution $(\vec{u}^1, \vec{\r}^1, A)\in Y^k(T)$, then there exists $\delta$ and $C>0$ which depend only on $\lVert U^1\lVert_{X^k(T)}$   and the uniform lower bound of $\rho_0$   such that
\begin{itemize}
\item[(i)] if $U^2\in X^k(T)$ satisfies
\begin{equation}\label{U_2-delta}
\lVert U^1-U^2\lVert_{X^k(T)}\leq \delta,
\end{equation}
then \eqref{S-eq-full-closed} has a unique solution $(\vec{u}^2, \vec{\r}^2, A^2)\in Y^k(T)$.
\item[(ii)] We define $\cR(U)$ to be the solution of \eqref{S-eq-full-closed}. If $U^2$ satisfies \eqref{U_2-delta}, then
\begin{equation}
\lVert \mathcal{R}\bigl(U^1\bigr)- \mathcal{R}\bigl(U^2\bigr)\lVert_{Y^{k-1}(T)}\leq C \lVert U^1-U^2\lVert_{X^{k-1}(T)}.   \notag
\end{equation}
\item[(iii)] The operator $\mathcal{R}: X^k(T)\rightarrow Y^k(T)$ is continuous at $U^1$.
\end{itemize}}
\end{theo}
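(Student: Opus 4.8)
The plan is to exploit the cascade structure $(\rho_0,u_0)\rightarrow A\rightarrow(\rho_1,u_1)$ of \eqref{S-eq-full-closed}: only the first block is genuinely quasilinear, while the remaining two become linear once $(\rho_0,u_0)$, respectively $A$, has been constructed, so they can be treated by standard linear hyperbolic/transport theory. \emph{Step 1 — the block $(\rho_0,u_0)$.} Writing the first two equations of \eqref{S-eq-full-closed} as a first-order system $\partial_t W+M(W)\,\partial_x W=G$ for $W=(\rho_0,u_0)$, where the principal matrix $M(W)$ has rows $(u_0+\zeta_0,\ \rho_0)$ and $(1,\ u_0+\xi_0)$ and $G$ gathers the forcing terms $-\rho_0\partial_x\zeta_0$ and $\eta_0-(u_0+\xi_0)\partial_x\xi_0$, one checks that $S(\rho_0)=\mathrm{diag}(1/\rho_0,1)$ is a Friedrichs symmetrizer, i.e.\ $S(\rho_0)M(W)$ is symmetric, and that it is positive definite as long as $\rho_0>0$. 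Since $g_0\in H^k$ with $g_0>0$, and since $\xi_0,\zeta_0,\eta_0$ enter $G$ only through quantities lying in $L^1_T H^{k}$ (using $L^2_T\hookrightarrow L^1_T$ on $[0,T]$ and, for $k\geq 3$, the algebra and commutator estimates for $H^k$ on $\mathbb{T}$), the classical theory of symmetrizable quasilinear hyperbolic systems yields a unique $W\in C([0,T_0];H^k_0\times H^k)$ for some $T_0=T_0(\|U\|_{X^k(T)},\inf g_0)\in(0,T]$. Positivity of $\rho_0$ is preserved and quantitative: along $\dot X=u_0+\zeta_0$ one has $\tfrac{d}{dt}\rho_0(t,X(t))=-\rho_0\,\partial_x(u_0+\zeta_0)$, so $\rho_0>0$ on $[0,T_0]$ with a lower bound controlled by $\inf g_0$ and $\int_0^{T_0}\|\partial_x(u_0+\zeta_0)\|_{L^\infty}\,dt$.

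\emph{Step 2 — the blocks $A$, $(\rho_1,u_1)$, and continuation (part (i)).} With $u_0\in C([0,T_0];H^k)$ at hand, the third equation of \eqref{S-eq-full-closed} is a linear transport equation for $A$ with field $u_0+\zeta_0$ and zeroth-order coefficient $2\partial_x(u_0+\zeta_0)$; since $A_0\in H^{k-1}$ it has a unique solution $A\in C([0,T_0];H^{k-1})$, with no loss of derivative. The last two equations then form a \emph{linear} symmetric hyperbolic system for $(\rho_1,u_1)$ with the same principal matrix $M(W)$ — hence symmetrized by the same $S(\rho_0)$, positive definite since $\rho_0>0$ on $[0,T_0]$ — and with a source built from $\partial_x A$, $\rho_0$, $u_0$ and the data $\xi_1,\zeta_1,\eta_1$, all lying in $L^1_T H^{k-2}$; standard linear hyperbolic theory gives a unique $(\rho_1,u_1)\in C([0,T_0];H^{k-2}\times H^{k-2})$. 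This proves existence and uniqueness in $Y^k(T_0)$. For part (i): by hypothesis $\mathcal R(U^1)\in Y^k(T)$, so $R_1:=\|\mathcal R(U^1)\|_{Y^k(T)}<\infty$ and $\kappa_1:=\inf_{[0,T]\times\mathbb{T}}\rho_0^1>0$; since the local time $T_0$ of Step 1 depends only on the $X^k$-norm and the lower bound of $\rho_0$, and since symmetrizable systems persist as long as $\int\|\partial_x(\cdot)\|_{L^\infty}<\infty$ and $\rho_0$ stays away from $0$, a bootstrap argument — using the lower-norm difference estimate of Step 3 to keep the $U^2$-solution within a fixed neighbourhood of $\mathcal R(U^1)$ — shows that, for $\|U^1-U^2\|_{X^k(T)}\leq\delta$ with $\delta=\delta(R_1,\kappa_1,T)$ small, the $U^2$-solution exists on all of $[0,T]$, stays within $2R_1$ in $Y^k(T)$, and keeps $\rho_0^2\geq\kappa_1/2$.

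\emph{Step 3 — Lipschitz dependence (part (ii)) and strong continuity (part (iii)).} Setting $\delta\rho_0=\rho_0^1-\rho_0^2$, etc., and subtracting the two copies of \eqref{S-eq-full-closed}, each difference solves a linear transport or symmetric hyperbolic equation of the same structure, with coefficients built from the two solutions (controlled only in $H^k$) and a right-hand side linear in $\delta U=U^1-U^2$ up to commutator-type terms $\delta(\text{coeff})\cdot\partial_x(\text{solution})$. Running the energy estimate one derivative below, i.e.\ in $Y^{k-1}$, with the product estimates valid for $k\geq 3$ and Gr\"onwall over $[0,T]$, gives $\|\mathcal R(U^1)-\mathcal R(U^2)\|_{Y^{k-1}(T)}\leq C\,\|U^1-U^2\|_{X^{k-1}(T)}$ with $C=C(R_1,\kappa_1,T)$; the loss $k\mapsto k-1$ is intrinsic, the difference system being no smoother than the solutions. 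For part (iii), this low-norm Lipschitz bound together with the uniform $Y^k$ bound already gives continuity of $\mathcal R$ at $U^1$ into $Y^{k'}(T)$ for every $k'<k$, and weak continuity into $Y^k(T)$; to upgrade to strong continuity in $Y^k(T)$ I would carry out a Bona--Smith argument: mollify the data $U^i$, control $\mathcal R(U^i)-\mathcal R(U^i_{(n)})$ in $Y^{k-1}$ via Step 3, bound $\|\mathcal R(U^i)-\mathcal R(U^i_{(n)})\|_{Y^k}$ by the vanishing high-frequency tail of $U^i$ (using that mollification gains regularity with a controlled constant), invoke the continuity of $\mathcal R$ on smooth data (from the same energy method at level $k+1$), and combine. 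Steps 1--2 and the difference estimates are classical; the only real care is the bookkeeping of the staggered regularities $H^k/H^{k-1}/H^{k-2}$ across the cascade and the merely $L^2$-in-time regularity of the controls, which is just barely enough thanks to $L^2_T\hookrightarrow L^1_T$. The main obstacle is part (iii): promoting the automatic low-norm Lipschitz continuity to strong continuity in the top norm, which requires the Bona--Smith mollification scheme to be implemented uniformly for the full coupled system.
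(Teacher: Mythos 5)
Your proposal is correct and follows essentially the same route as the paper: the cascade decomposition into the quasilinear symmetric-hyperbolic block $(\rho_0,u_0)$ (symmetrizable precisely because $\rho_0>0$), the linear transport equation for $A$ solved with no loss of derivatives, and the linear symmetric-hyperbolic block $(\rho_1,u_1)$ with the same principal part, together with difference estimates one derivative lower for the Lipschitz bound. The only real difference is presentational: the paper handles the first block by quoting the results of Nersisyan (Theorems \ref{th3.1} and \ref{th3.2}) and standard symmetrizable-hyperbolic theory, using the symmetrizer $\mathrm{diag}(1,\rho_0)$ rather than your $\mathrm{diag}(1/\rho_0,1)$ (both work), whereas you sketch directly the continuation bootstrap for (i) and a Bona--Smith argument for the top-norm continuity in (iii).
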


\begin{theo}\label{thm-con}
Let $k\geq 3$.  If $(\xi^n_0, \zeta^n_0)$ and $(\xi^n_1, \zeta^n_1)$ are bounded sequences in $L^2_T(H^{k+3})$ and $L^2_T(H^{k+1})$ respectively, which
satisfy, as $n\rightarrow +\infty$,
\begin{gather}
\int_0^{t} \zeta_0^n(s) \chi_0^n(s) ds\rightarrow 0 \textrm{ in }H^k,  \notag\\
\int_0^{t} \zeta_0^n(s) \tilde  \chi_0^n(s) ds\rightarrow 0 \textrm{ in }H^{k-1},  \notag\\
\int_0^{t} \zeta_1^n(s) \chi_1^n(s) ds\rightarrow 0 \textrm{ in }H^{k-2},   \notag
\end{gather}
for any $t$ and for any uniformly equicontinuous sequences $\chi^n=(\chi^n_0, \tilde \chi^n_0, \chi^n_1):[0, T]\rightarrow H^{k}\times H^{k-1}\times H^{k-2}$.
Let
\begin{gather*}
U^n=(\vec{v}, \vec{g}, A_0, \vec{\xi}^n, 0, \vec{\eta}),\;
V^n=(\vec{v}, \vec{g}, A_0, \vec{\xi}^n, \vec{\zeta}^n, \vec{\eta})\in X^{k+2}(T).
\end{gather*}
Suppose that corresponding to $U^n$,  the equation \eqref{S-eq-full-closed} have solutions in $Y^{k+2}(T)$ being uniformly bounded in $Y^{k}(T)$. Then, for $n$ sufficiently large  the equation \eqref{S-eq-full-closed} corresponding to $V^n$ have solutions  which is also uniformly bounded in $Y^{k+2}(T)$ and, as $n\rightarrow +\infty$,
\begin{equation*}
\mathcal{R}\bigl(U^n\bigr)- \mathcal{R}\bigl(V^n\bigr)\rightarrow 0 \textrm{ in } Y^k(T).
\end{equation*}
The same property holds if instead of $U^n$ we assume the existence of solutions  for $V^n$.
\end{theo}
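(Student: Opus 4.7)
The plan is to exploit the structural feature of \eqref{S-eq-full-closed} that $\vec\zeta$ does not appear in the equations for $u_0$ and $u_1$: the perturbation affects only the ``transport--type'' components $\r_0, A, \r_1$, while $\delta u_0^n$ and $\delta u_1^n$ are driven solely through couplings with $\pa_x\delta\r_0^n$ and $\pa_x\delta\r_1^n$. Writing $\cR(V^n)-\cR(U^n)=(\delta\vec u^n,\delta\vec\r^n,\delta A^n)$ and subtracting the two systems gives, for the transport--type differences, linearized equations whose forcing splits into nonlinear couplings among the differences themselves plus single source terms of the form $\pa_x(\zeta_0^n \r_0^n)$ for $\delta\r_0^n$, $\zeta_0^n\pa_x A^n+2\pa_x\zeta_0^n\cdot A^n$ for $\delta A^n$, and $\pa_x(\zeta_0^n \r_1^n)+\pa_x(\zeta_1^n \r_0^n)$ for $\delta\r_1^n$, the capped letters denoting the reference $U^n$--solutions.

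The argument is carried out in three steps. \emph{Step 1: uniform $Y^{k+2}$--bounds for $\cR(V^n)$.} Theorem \ref{th3.5} supplies local existence, and a continuation/bootstrap argument reduces the issue to an a priori bound. The standard commutator/symmetrizer estimates for the quasilinear system produce, after integration in time, forcings of the form $\int_0^t \zeta^n_j(s)\,\Phi^n_j(s)\,ds$, where each $\Phi^n_j$ is a nonlinear function of the solution at a spatial regularity matched to one of the three clauses of the hypothesis; the PDE itself bounds $\pa_t\Phi^n_j$, yielding uniform equicontinuity of $\Phi^n_j$ into the appropriate Sobolev space, so that the weak--vanishing hypothesis forces each integral to be $o(1)$ and closes the bootstrap. \emph{Step 2: convergence in a weaker space.} Apply a Duhamel formula to each transport--type difference equation, and in the $\zeta^n$--source commute the time integral past the spatial derivative so that the resulting quantities $\int_0^t \zeta^n_j(s)\,(\cdot)(s)\,ds$ fit exactly one of the three clauses of the hypothesis; equicontinuity of $\r_0^n, A^n, \r_1^n$ (inherited from Step 1) then makes these sources vanish at regularity one or two derivatives below the natural one. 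Coupled Gronwall estimates for the full linear difference system yield $\cR(U^n)-\cR(V^n)\to 0$ in a strictly weaker space, concretely $Y^{k-1}(T)$ (or even lower for the $A$-- and $\r_1$--components). \emph{Step 3: interpolation.} The uniform $Y^{k+2}$--bound from Step 1 combined with the weak convergence from Step 2 upgrades the convergence to the desired $Y^k(T)$--topology by Sobolev interpolation of the form $\|w\|_{H^k}\leq\|w\|_{H^{k-1}}^{2/3}\|w\|_{H^{k+2}}^{1/3}$, and similarly at the other regularity levels.

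The main obstacle is Step 1: the instantaneous Sobolev norm of $\zeta^n$ need not be small, so the naïve commutator expansion of the top--order energy identity does not close. The cancellations supplied by the weak--vanishing hypothesis must be extracted by an integration by parts in time carried out \emph{inside} the energy estimate, so that $\zeta^n$ appears only in the integrated form $\int_0^t \zeta^n(s)\,\chi^n(s)\,ds$ with $\chi^n$ precisely equicontinuous at the correct regularity level, which must be tracked carefully across the cascade $\r_0^n\to A^n\to \r_1^n$ to match the three clauses of the hypothesis. The final clause of the theorem, which swaps the roles of $U^n$ and $V^n$, follows by the symmetric version of the same argument, since once the uniform high--regularity bound is in hand nothing in the analysis distinguishes the two sequences.
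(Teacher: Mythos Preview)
Your overall architecture is sound and correctly identifies the crucial structural feature (that $\vec\zeta$ enters only the transport-type components), but the paper organizes the argument rather differently, and one of your steps is both harder than necessary and not quite aligned with the hypothesis.

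\textbf{How the paper proceeds.} The system is broken into the cascade $(\rho_0,u_0)\to A\to(\rho_1,u_1)$, and an oscillation lemma is applied to each block separately: Nersisyan's Theorem~2.3 is quoted verbatim for the quasilinear $(\rho_0,u_0)$ piece, while the linear transport block for $A$ (and, analogously, the linear $(\rho_1,u_1)$ block) is handled directly. For the transport lemma the key device is not Duhamel but a \emph{compactness-then-substitution} trick: since $\zeta^n_0$ is bounded in $L^2_T(H^{k+2})$, standard transport estimates already give uniform $C([0,T];H^{k+1})$ bounds on $A^n$ with no cancellation needed; Aubin--Lions then makes $\delta A^n$ precompact in $C([0,T];H^k)$, so up to a subsequence $\delta A^n\to f_A$. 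In the $H^{k-1}$ energy identity for $\delta A^n$ the oscillating forcing is $(\zeta^n_0\pa_xA^n,\delta A^n)_{H^{k-1}}$; one replaces $\delta A^n$ by $f_A$ (error $o(1)$), approximates $f_A$ by a time-step function $f_m$, and on each step pulls the constant test function out of the time integral so that the hypothesis applies directly to $\int\zeta^n_0\,\pa_xA^n\,ds$. A tailored Gronwall (Lemma~\ref{S2lem1}) then gives $\|\delta A^n\|_{H^{k-1}}\to 0$, which is already the $Y^k$-target regularity for $A$; no interpolation is used.

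\textbf{Where your plan diverges.} Your Step~1 treats the uniform $Y^{k+2}$ bound as the ``main obstacle'' and proposes to extract the weak-vanishing cancellation inside the top-order energy estimate. This is unnecessary and, as written, has a regularity mismatch: the commutator terms at $H^{k+2}$ level would require the hypothesis at $H^{k+2}$, whereas it is only stated in $H^k,H^{k-1},H^{k-2}$. In fact the high-norm bounds come for free for the linear blocks (transport estimate with $\|\zeta^n\|_{L^2_T(H^{k+3})}$ as Gronwall weight), and for the quasilinear block they are obtained \emph{after} the $Y^k$ convergence via the blow-up criterion---this is exactly what Nersisyan's cited theorem packages. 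Your Step~2 via Duhamel can be made to work, but ``commute the time integral past $\pa_x$'' hides a derivative-loss issue for variable-coefficient transport; the paper sidesteps this entirely with the compactness-plus-step-function substitution inside the \emph{energy} identity rather than inside a mild formulation. Your Step~3 interpolation is then redundant: the energy estimate is run directly at the target level, with compactness one level higher supplying the replacement function $f_A$.
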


The rest part of this section is devoted to the proofs of the two preceding theorems.
\begin{proof}[Proof of Theorems \ref{th3.5} and \ref{thm-con}] We split the proof of Theorems \ref{th3.5} and \ref{thm-con} into the following steps:

\noindent$\bullet$ {\bf Step 1.} Given $(\zeta_0,\xi_0)\in L^2_T(H^{k+1}_0)$ and $f\in L^2_T(H^k_0),$ we consider
\begin{equation}\label{S-eq-full-1}
\left\{\begin{array}{l}
\displaystyle  \pa_t\r_0+\pa_x\bigl((u_0+\zeta_0)\r_0\bigr)=0,\\
\displaystyle \pa_t u_0+\bigl(u_0+\xi_0)\pa_x\bigl(u_0+\xi_0)+\pa_x\r_0=f,\\
\displaystyle \r_0(0)= g_0, u_0(0)= v_0.
\end{array}\right.
\end{equation}
 Let $k\in \mathbb{N}\setminus \{0,1\}$ and let $0<\tilde T\leq T$,
\beno
\begin{split}
Y_0^k(\tilde T)\eqdefa &\{(\r_0, u_0)\in C([0,\tilde T]; H^k)\times C([0,\tilde T]; H^k); \, \r_0>0 \text{ in } [0,\tilde T]\times \mathbb{T}\};\\
X_0^k(T)\eqdefa &\{U_0\in H^k\times H^k\times L^2_T(H^{k+1})\times L^2_T(H^{k+1})\times L^2_T(H^{k}); \, g_0>0 \text{ in }  \mathbb{T}\}.
\end{split}
\eeno
Let $U_0\eqdefa (g_0,v_0, \zeta_0,\xi_0, f)$. We define
the solution operator
\beq \label{S3eq1} \cR_0: X_0^k(T) \mapsto Y_0^k(\tilde T) \quad\mbox{so that}\quad
\cR_0(U_0)\eqdefa (\r_0, u_0).   \notag
\eeq

The system \eqref{S-eq-full-1} is a  symmetric hyperbolic systems with symmetrizer
$
\begin{pmatrix}
1 & 0 \\
0 & \r_0
\end{pmatrix}$. Standard hyperbolic theory (\cite{1993-Beirao-da-Veiga-CPAM}) ensures that, for every bounded set $B$ of $X_0^k(T)$ such that, for some $c>0$, $\rho_0\geq c$ in $ \mathbb{T}$ for every $U_0\in B$, there exists $\tilde T \in (0,T]$ such that  the operator $\cR_0$  is well-defined from $B$ into
$Y_0^k(\tilde T)$. Furthermore, we have

\begin{theo}[Theorem 2.2 of \cite{2011-Nersisyan-CPDE}]\label{th3.1}
{\sl Let $k\geq 3$. Let $U_0^1\in X_0^k(T).$ We suppose that the system \eqref{S-eq-full-1} has a solution $(\r_0^1, u_0^1)\in Y^k_0(T)$. Then there exists $\delta_0$ and $C>0$ which depend only on $\lVert U_0^1\lVert_{X_0^k}$ and the uniform lower bound of $\rho_0^1$  such that,
\begin{itemize}
\item[(i)] if $U_0^2\in X_0^k(T)$ satisfies
\begin{equation}
\lVert U_0^1-U_0^2\lVert_{X^k_0(T)}\leq \delta_0,   \notag
\end{equation}
then  \eqref{S-eq-full-1} has a unique solution $(\r_0^2, u_0^2)\in Y^k_0(T)$.
\item[(ii)]
\begin{equation}
\lVert \mathcal{R}_0\bigl(U_0^1\bigr)- \mathcal{R}_0\bigl(U_0^2\bigr)\lVert_{Y^{k-1}_0(T)}\leq C \lVert U_0^1-U_0^2\lVert_{X^{k-1}_0(T)}.    \notag
\end{equation}
\item[(iii)] The operator $\mathcal{R}_0: X^k_0(T)\rightarrow Y^k_0(T)$ is well defined and continuous on a neighborhood of $U_0^1$.
\end{itemize}}
\end{theo}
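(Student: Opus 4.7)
The plan is to reproduce the standard quasi-linear symmetric hyperbolic theory in the spirit of Kato, adapted to the specific structure of system \eqref{S-eq-full-1}. The key observation is that \eqref{S-eq-full-1} with $(\zeta_0,\xi_0,f)$ regarded as data is a symmetrizable hyperbolic system in $(\rho_0,u_0)$, symmetrized by $\mathrm{diag}(1,\rho_0)$, and that the transport structure of the equation on $\rho_0$ preserves positivity as long as $\partial_x(u_0+\zeta_0)$ stays bounded.

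\medskip

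\textbf{Step 1 (existence).} Given $U_0^1\in X_0^k(T)$ with solution $(\rho_0^1,u_0^1)\in Y_0^k(T)$, I would set up a Picard scheme for $U_0^2$ near $U_0^1$, linearizing each equation around the previous iterate. Standard $H^k$ energy estimates for the symmetrized linearized system, together with Kato--Moser commutator estimates of the form $\|[\partial_x^k,a]\partial_x b\|_{L^2}\lesssim \|a\|_{H^k}\|b\|_{H^k}$, give uniform $H^k$ bounds on a time interval depending only on $\|U_0^1\|_{X_0^k(T)}$ and the lower bound of $\rho_0^1$. Picking $\delta_0$ small enough, $U_0^2$ remains in a compact set on which these a priori estimates close, and the transport representation $\rho_0^2(t,x)=g_0^2(X^{-1}(t,x))\exp(-\int_0^t\partial_x(u_0^2+\zeta_0^2)\,ds)$ along characteristics guarantees that $\rho_0^2$ stays strictly positive on $[0,T]$; pushing the solution to time $T$ uses the already-existing reference trajectory as a barrier.

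\medskip

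\textbf{Step 2 (Lipschitz estimate in the weaker norm, (ii)).} Set $(\tilde\rho,\tilde u)\eqdefa (\rho_0^2-\rho_0^1,u_0^2-u_0^1)$. Subtracting the two systems yields a linear symmetric hyperbolic system for $(\tilde\rho,\tilde u)$ whose coefficients are controlled in $H^k$ by $U_0^1,U_0^2$ and whose source terms are linear in $(g_0^2-g_0^1,v_0^2-v_0^1,\zeta_0^2-\zeta_0^1,\xi_0^2-\xi_0^1,f^2-f^1)$. Running an $H^{k-1}$ energy estimate on this linear system with the symmetrizer $\mathrm{diag}(1,\rho_0^1)$ and applying Gr\"onwall gives
\begin{equation*}
\|(\tilde\rho,\tilde u)\|_{Y_0^{k-1}(T)}\le C\,\|U_0^1-U_0^2\|_{X_0^{k-1}(T)},
\end{equation*}
with $C$ depending only on $\|U_0^1\|_{X_0^k(T)}$ and the lower bound on $\rho_0^1$. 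The loss of one derivative is exactly what one expects: the linearized system has $H^k$ coefficients, so the natural energy identity closes at the $H^{k-1}$ level.

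\medskip

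\textbf{Step 3 (continuity at $U_0^1$ in the strong norm, (iii)).} This is the step I expect to be the main obstacle, because (ii) does not by itself give continuity of $\mathcal{R}_0$ from $X_0^k$ to $Y_0^k$. I would invoke a Bona--Smith regularization argument: given $U_0^n\to U_0^1$ in $X_0^k(T)$, mollify each datum to obtain $U_0^{n,\varepsilon}\in X_0^{k+1}(T)$ with $U_0^{n,\varepsilon}\to U_0^n$ as $\varepsilon\to 0$ and uniform $H^{k+1}$ bounds growing at most like $\varepsilon^{-1}$ times the $H^k$ size. Step 1 applied at level $k+1$ produces solutions $\mathcal{R}_0(U_0^{n,\varepsilon})$ bounded in $Y_0^{k+1}(T)$ uniformly in $n$ for $\varepsilon$ fixed, step 2 at level $k$ controls the regularization error $\mathcal{R}_0(U_0^n)-\mathcal{R}_0(U_0^{n,\varepsilon})$ by $C\varepsilon\|U_0^n\|_{X_0^{k+1}}$, and an interpolation between $H^{k-1}$ and $H^{k+1}$ upgrades the convergence of $\mathcal{R}_0(U_0^{n,\varepsilon})\to\mathcal{R}_0(U_0^{1,\varepsilon})$ in $Y_0^{k-1}$ (from (ii)) to $Y_0^k$. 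Sending $\varepsilon\to 0$ after a diagonal choice $\varepsilon=\varepsilon(n)$ yields $\mathcal{R}_0(U_0^n)\to\mathcal{R}_0(U_0^1)$ in $Y_0^k(T)$. The delicate point is matching rates so that the $H^{k+1}$ blow-up of the mollified data is beaten by the $\varepsilon$-smallness of the regularization error; the classical Bona--Smith trick handles this with $\varepsilon$ and $\|U_0^n-U_0^1\|_{X_0^k}$ on the same footing.
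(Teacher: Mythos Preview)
The paper does not prove this statement: Theorem~\ref{th3.1} is quoted as Theorem~2.2 of Nersisyan~\cite{2011-Nersisyan-CPDE}, preceded only by the observation that \eqref{S-eq-full-1} is symmetrizable hyperbolic with symmetrizer $\mathrm{diag}(1,\rho_0)$ and a pointer to the standard local theory in~\cite{1993-Beirao-da-Veiga-CPAM}. There is therefore no in-paper proof to compare your proposal against.

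Your outline is the standard Kato/Bona--Smith route one would expect to find behind such a citation, and the architecture is correct. One small slip in Step~3: you write that the regularization error $\mathcal{R}_0(U_0^n)-\mathcal{R}_0(U_0^{n,\varepsilon})$ is bounded by $C\varepsilon\|U_0^n\|_{X_0^{k+1}}$, but $U_0^n$ is only in $X_0^k$. The actual Bona--Smith balance goes the other way: the Lipschitz estimate (ii) at level $k+1$ applied to the \emph{regularized} data gives $\|\mathcal{R}_0(U_0^{n,\varepsilon})-\mathcal{R}_0(U_0^{1,\varepsilon})\|_{Y_0^k}\le C_\varepsilon\|U_0^{n,\varepsilon}-U_0^{1,\varepsilon}\|_{X_0^k}\to 0$ for fixed $\varepsilon$, while the regularization errors $\mathcal{R}_0(U_0^n)-\mathcal{R}_0(U_0^{n,\varepsilon})$ in $Y_0^k$ are handled by running the $H^k$ energy estimate on the difference system and absorbing the one bad derivative by the $H^{k+1}$ bound on the mollified solution against the $H^{k-1}$ smallness $\|U_0^n-U_0^{n,\varepsilon}\|_{X_0^{k-1}}\lesssim\varepsilon\|U_0^n\|_{X_0^k}$. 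With that correction your sketch is sound.
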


Another important property of $\mathcal{R}_0$ is the following oscillation type lemma.
\begin{theo}[Theorem 2.3 of \cite{2011-Nersisyan-CPDE}]\label{th3.2}
{\sl Let $k\geq 3$. Let $\xi_0^n, \zeta_0^n$ be bounded sequences in $L^2_T(H^{k+3})$ and $\zeta_0^n$ satisfy
\begin{equation}
\int_0^{t} \zeta_0^n(s) \chi_0^n(s) ds\rightarrow 0 \textrm{ in }H^k   \textrm{ as $n\rightarrow +\infty$, }  \notag
\end{equation}
for any $t\in [0, T]$ and for any uniformly equicontinuous sequence $\chi_0^n: [0, T]\rightarrow H^k$.
Let
\begin{gather}
U_0^n=(\r_0, v_0, \xi_0^n, \zeta_0^n, \eta_0)\in X_0^{k+2}(T),  \notag\\
V_0^n=(\r_0, v_0, \xi_0^n, 0, \eta_0)\in X_0^{k+2}(T). \notag
\end{gather}
Suppose that corresponding to $U_0^n$,  the equation \eqref{S-eq-full-1} have solutions in $Y^{k+2}_0(T)$ being uniformly bounded in $Y^{k}_0(T)$. Then, for $n$ sufficiently large  the equation \eqref{S-eq-full-1} corresponding $V^n_0$ have solutions  in $Y^{k+2}_0(T)$, moreover,  there holds
\begin{equation}
\mathcal{R}_0\bigl(U_0^n\bigr)-\mathcal{R}_0\bigl(V_0^n\bigr)\rightarrow 0 \textrm{ in } Y_0^k(T) \textrm{ as $n\rightarrow +\infty$. }   \notag
\end{equation}}
\end{theo}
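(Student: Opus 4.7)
The plan is to obtain $\mathcal{R}_0(V_0^n)$ as a small perturbation of the already-given $\mathcal{R}_0(U_0^n)$, driven by a source term involving $\zeta_0^n$ which, although not small in any Sobolev norm, is small in the oscillatory sense supplied by the hypothesis. Writing $(\rho^n,u^n):=\mathcal{R}_0(U_0^n)$ and $(p^n,q^n):=(\tilde\rho^n-\rho^n,\tilde u^n-u^n)$ where $(\tilde\rho^n,\tilde u^n)$ is the sought $V_0^n$-solution, subtraction of the two copies of \eqref{S-eq-full-1} produces the linear symmetrizable hyperbolic system
\begin{align*}
&\partial_t p^n+\partial_x(\tilde u^n p^n)+\partial_x(q^n\rho^n)=\partial_x(\zeta_0^n\rho^n),\\
&\partial_t q^n+(u^n+\xi_0^n)\partial_x q^n+q^n\partial_x(\tilde u^n+\xi_0^n)+\partial_x p^n=0,
\end{align*}
with $(p^n,q^n)|_{t=0}=0$; the goal is to prove $(p^n,q^n)\to 0$ in $Y^k_0(T)$.

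The key estimate is obtained by the natural symmetrizer $\mathrm{diag}(1/\rho^n,1)$ and standard $H^k$-commutator manipulations. Setting $E^n(t):=\|(p^n,q^n)(t)\|_{H^k}^2$, these produce an integral inequality of the form
\begin{equation*}
E^n(t)\leq C\int_0^t E^n(s)\,ds+|R^n(t)|,
\end{equation*}
in which $R^n(t)$ gathers every term carrying a factor of $\zeta_0^n$. After integration by parts in $x$ to redistribute derivatives off $\zeta_0^n$, each summand of $R^n(t)$ has the schematic shape $\int_0^t\langle\zeta_0^n(s),\Phi^n(s)\rangle\,ds$, with $\Phi^n$ polynomial in $\rho^n,u^n,p^n,q^n$ and their spatial derivatives. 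The hypothesis applies provided each family $\{\Phi^n\}$ is uniformly equicontinuous from $[0,T]$ into $H^k$; this equicontinuity is verified by reading $\partial_t\rho^n,\partial_t u^n,\partial_t p^n,\partial_t q^n$ off the two systems and using the $L^2_T(H^{k+3})$-bounds on $\xi_0^n,\zeta_0^n$, which provide exactly the two derivatives of slack beyond the regularity needed to obtain a uniform H\"older modulus. Those $\Phi^n$ containing $(p^n,q^n)$ are handled by a bootstrap: first a coarse bound on $E^n$ follows from the Sobolev bounds on $\zeta_0^n$ alone, then the oscillation hypothesis upgrades $R^n(t)\to 0$ uniformly in $t$, and Gronwall gives $E^n\to 0$.

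To produce $\mathcal{R}_0(V_0^n)$ on all of $[0,T]$ in $Y^{k+2}_0(T)$, I would run the same estimate at the regularity $k+2$ (which is why the hypothesis requires $L^2_T(H^{k+3})$), obtaining a uniform $H^{k+2}$-bound on $(p^n,q^n)$, and hence on $(\tilde\rho^n,\tilde u^n)$, together with a strictly positive lower bound on $\tilde\rho^n$ for $n$ large (since $p^n\to 0$ in $H^k\hookrightarrow L^\infty$ and $\rho^n\geq c>0$). Combined with the continuation criterion underlying Theorem \ref{th3.1}, this forbids finite-time blow-up, so the local $V_0^n$-solution supplied by Theorem \ref{th3.1} extends to the full interval $[0,T]$.

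The hard part is the bookkeeping in the $H^k$-energy identity: one must arrange that every occurrence of $\zeta_0^n$ appears paired with a factor from a uniformly equicontinuous $H^k$-valued family, which is delicate because some such factors involve $\partial_t p^n$ or $\partial_t q^n$ and therefore depend on the very quantities being estimated. Resolving this by the bootstrap described above is the Agrachev--Sarychev oscillation mechanism transported to the compressible Euler setting, and is precisely the content carried out for a single such system in \cite{2011-Nersisyan-CPDE}.
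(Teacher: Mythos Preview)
The paper does not prove this statement: it is quoted verbatim as Theorem~2.3 of \cite{2011-Nersisyan-CPDE} and invoked as a black box. So there is no ``paper's own proof'' to compare against here.

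That said, your sketch is the correct mechanism and matches how the paper itself argues the analogous oscillation lemma for the transport block, Theorem~\ref{th-transport}. There the organization is slightly different from yours: rather than a bootstrap, the paper first uses the extra two derivatives ($U_A^n\in X_A^{k+2}$) together with Aubin--Lions to extract a uniform limit $f_A$ of the difference $\delta_A^n$ in $C([0,T];H^k)$, and then in the $H^{k-1}$ energy identity replaces $\delta_A^n$ by $f_A$ and approximates $f_A$ by step functions in time before invoking the oscillation hypothesis \eqref{S3eq12}. The resulting differential inequality is closed by the tailored Gronwall variant, Lemma~\ref{S2lem1}. Your bootstrap (coarse bound first, then equicontinuity of the $\Phi^n$ from the equations, then oscillation hypothesis) is an equivalent way to justify that the $\zeta_0^n$-terms integrate to something small; the compactness route has the minor advantage that one never needs to track equicontinuity of factors that themselves contain $(p^n,q^n)$, since those are swapped for the fixed limit before the hypothesis is applied. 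Either route is fine, and both are faithful to the Nersisyan argument you cite at the end.
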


\noindent$\bullet$ {\bf Step 2.}
 We investigate the linear transport equation
\begin{equation}\label{S-eq-full-2-gen}
\left\{\begin{array}{l}
\displaystyle
 \pa_tA+\bigl(u_A+\zeta_A\bigr)\pa_xA+2\pa_x\bigl(u_A+\zeta_A\bigr)A=0,\\
 \displaystyle
 A(0)= A_0.
 \end{array}\right.
\end{equation}
Let $U_A\eqdefa (A_0, u_A, \zeta_A),$ and
\begin{gather*}
X_A^k(T)\eqdefa H^{k-1}\times L^2_T(H^k)\times L^2_T(H^k) \andf
Y_A^k(T)= C([0,T]; H^{k-1}).
\end{gather*}
We define the solution operator
\begin{equation}
\cR_{\mathcal{A}}: X_A^k(T)\rightarrow Y_A^k(T)\quad \with \cR_{\mathcal{A}}(U_A)=A.   \notag
\end{equation}

\begin{theo}\label{th-transport}
{\sl Let $l\geq 3$. For any $U_A\in X_A^l(T),$
 \eqref{S-eq-full-2-gen} has a unique solution $A$ in $Y_A^l(T)$. Moreover, for every bounded subset $B$ of $X_A^l(T)$ there exists $C>0$ so that
\begin{equation}\label{S3eq11}
\lVert \cR_{\mathcal{A}} \bigl(U_A^1\bigr)- \cR_{\mathcal{A}} \bigl(U_A^2\bigr)\lVert_{Y^{l-1}_A(T)}\leq C\lVert U_A^1-U_A^2\lVert_{X^{l-1}_A}(T), \, \forall U_A^1\in B, \, \forall U_A^2\in B .   \notag
\end{equation}

Furthermore, let $k\geq 3$, let $\zeta_A^n$ be a bounded sequences in $L^2_T(H^{k+2})$  such that
\begin{equation}\label{S3eq12}
\int_0^{t} \zeta_A^n(s) \chi_A^n(s) ds\rightarrow 0 \textrm{ in }H^{k-1}  \textrm{ as } n\rightarrow +\infty,
\end{equation}
for any $t\in [0, T]$ and for any uniformly equicontinuous sequence $\chi_A^n: [0, T]\rightarrow H^{k-1}$.
Let $
U_A^n=(A_0, u_A, \zeta_A^n)\in X_A^{k+2}(T)$,
we denote $V_A=(A_0, u_A, 0).$ Then, we have
\begin{equation}\label{S3eq13}
\cR_{\mathcal{A}}\bigl(U_A^n\bigr)-\cR_{\mathcal{A}}\bigl(V_A\bigr)\rightarrow 0 \textrm{ in } Y_A^k(T).
\end{equation}
}
\end{theo}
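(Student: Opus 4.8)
\textbf{Proof plan for Theorem \ref{th-transport}.}

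The plan is to establish the three assertions in turn, treating \eqref{S-eq-full-2-gen} as a scalar linear transport equation with a zeroth-order coefficient: writing $b \eqdefa u_A + \zeta_A$, the equation is $\pa_t A + b\,\pa_x A + 2(\pa_x b) A = 0$. First, for well-posedness and the bound in $Y_A^l(T)$, I would run the standard energy method for linear symmetric hyperbolic (here, scalar transport) equations: since $b \in L^2_T(H^l)$ with $l \geq 3 > 3/2 + 1$, the coefficients $b$ and $\pa_x b$ lie in $L^2_T(H^{l-1}) \hookrightarrow L^1_T(\mathrm{Lip})$, so applying $\pa_x^\alpha$ for $|\alpha| \leq l-1$, commuting, and using Moser-type product estimates and Gronwall gives existence, uniqueness, and an a priori bound $\|A\|_{C([0,T];H^{l-1})} \leq C(\|b\|_{L^2_T(H^l)})\,\|A_0\|_{H^{l-1}}$ on the whole interval $[0,T]$ (no shrinking of time is needed because the equation is linear in $A$). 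Existence can be obtained by a Friedrichs mollification / vanishing-viscosity scheme or by the method of characteristics for the flow of $b$, and uniqueness follows from the $H^0$ (or $L^2$) energy estimate applied to the difference of two solutions.

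For the Lipschitz dependence estimate, given $U_A^1 = (A_0, u_A^1, \zeta_A^1)$ and $U_A^2 = (A_0, u_A^2, \zeta_A^2)$ in a bounded set $B$ (note the initial datum is the same in the statement, but the same argument works with differing data at the cost of one order), set $A^i = \cR_{\mathcal A}(U_A^i)$, $b^i = u_A^i + \zeta_A^i$, and $\delta A = A^1 - A^2$, $\delta b = b^1 - b^2$. Then $\delta A$ solves $\pa_t \delta A + b^1 \pa_x \delta A + 2(\pa_x b^1)\delta A = -\delta b\,\pa_x A^2 - 2(\pa_x \delta b) A^2$, with zero initial data. Running the same energy estimate at regularity $H^{l-1}$ and using that $A^2$ is bounded in $C([0,T];H^{l-1})$ by the first part and $b^1$ in $L^2_T(H^{l-1})$, one bounds the right-hand side in $L^1_T(H^{l-1})$ by $C\,\|\delta b\|_{L^2_T(H^l)}\|A^2\|_{C_T(H^{l-1})} \lesssim \|U_A^1 - U_A^2\|_{X_A^{l-1}(T)}$; Gronwall then closes the estimate. (The loss of one derivative is exactly why the conclusion is stated in $X_A^{l-1}, Y_A^{l-1}$.)

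The third assertion — the oscillation/averaging property \eqref{S3eq13} — is the main obstacle, and is the step where I would follow the Agrachev–Sarychev–Nersisyan philosophy most closely (cf. the analogous Theorem \ref{th3.2}). The idea is that when $\zeta_A^n$ oscillates so fast that $\int_0^t \zeta_A^n(s)\chi_A^n(s)\,ds \to 0$ against all equicontinuous test sequences, its contribution to the transport is, to leading order, a pure rescaling that averages out. Concretely, I would change the unknown by absorbing the oscillating drift into the characteristic flow: let $\Phi^n(t,\cdot)$ be the flow of $u_A + \zeta_A^n$ and $\Psi^n$ the flow of $u_A$ alone, and show that $\Phi^n$ and $\Psi^n$ become uniformly close in the relevant norm using precisely the hypothesis \eqref{S3eq12} (the displacement $\Phi^n(t,x) - \Psi^n(t,x)$ is, up to lower-order terms controlled by Gronwall, $\int_0^t \zeta_A^n$ composed with uniformly equicontinuous maps, hence $\to 0$); then transport the factor $2\pa_x(u_A+\zeta_A^n)$ along characteristics, where it becomes (by the chain rule, since $A \circ \Phi^n$ times the Jacobian $\pa_x \Phi^n$ satisfies a simpler equation — indeed $\pa_t\big((\pa_x\Phi^n)^2 (A\circ \Phi^n)\big) $ along characteristics vanishes, reflecting that $A\,(\pa_x S)^2$-type quantities are transported), and the convergence $A^n \to A^\infty$ in $C([0,T];H^k)$ follows by comparing the two representations and invoking \eqref{S3eq12} together with compactness (Arzelà–Ascoli) to upgrade weak-type averaging to the stated strong convergence after interpolating between the uniform $H^{k+2}$ bound and the $H^{k-1}$ convergence. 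The delicate point is to carry out this averaging at the right regularity level and to verify that all the intermediate sequences (flows, Jacobians, transported coefficients) are uniformly equicontinuous into the appropriate Sobolev space so that \eqref{S3eq12} genuinely applies; this bookkeeping, rather than any single hard estimate, is where the real work lies.
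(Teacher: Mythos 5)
Your treatment of the first two assertions (energy method or characteristics for well-posedness, then the $H^{l-2}$-level estimate on the difference equation for the Lipschitz bound) is sound and consistent with the paper, which simply invokes the characteristics representation for this part; note only that the estimate matching $Y_A^{l-1}(T)=C([0,T];H^{l-2})$ should be run at regularity $H^{l-2}$, not $H^{l-1}$, and that the statement does allow different initial data $A_0$ (harmless, as you observe). For the oscillation property \eqref{S3eq13}, however, you take a genuinely different route from the paper. The paper never touches the flow maps: it sets $\d_A^n=A^n-A$, notes that the uniform $C([0,T];H^{k+1})$ bound on $A^n$ plus the equation give equicontinuity (Aubin--Lions/Arzel\`a--Ascoli), extracts a uniform limit $f_A$ of $\d_A^n$ in $C([0,T];H^k)$, performs an $H^{k-1}$ energy estimate on the difference equation in which the only dangerous terms are $\left(\zeta_A^n\pa_xA^n,\d_A^n\right)_{H^{k-1}}$ and $\left(\pa_x\zeta_A^nA^n,\d_A^n\right)_{H^{k-1}}$, replaces $\d_A^n$ by $f_A$ and then by a time-step-function approximation so that \eqref{S3eq12} applies directly, and closes with the Gronwall-type Lemma \ref{S2lem1}. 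This is leaner than your scheme because the oscillating factor only ever appears multiplied against a single uniformly convergent sequence, exactly the situation \eqref{S3eq12} was designed for.

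The concrete gap in your proposal is the step where you claim the displacement $\Phi^n-\Psi^n$ of the characteristic flows is ``$\int_0^t\zeta_A^n$ composed with uniformly equicontinuous maps, hence $\to 0$'' by \eqref{S3eq12}. The hypothesis controls time integrals of pointwise \emph{products} $\zeta_A^n(s)\chi_A^n(s)$; what actually appears in the Duhamel formula for the flow is $\int_0^t\zeta_A^n\bigl(s,\Phi^n(s,x)\bigr)\,ds$, a \emph{composition} inside the time integral, and the composition cannot be pulled outside. To repair this you must freeze the flow on short time intervals (using the uniform-in-$n$ time equicontinuity of $\Phi^n$ and the uniform spatial Lipschitz bounds coming from $\zeta_A^n$ bounded in $L^2_T(H^{k+2})$), apply \eqref{S3eq12} with constant test sequences to each frozen piece, and interpolate with the $L^2_T(H^{k+2})$ bound to recover the spatial derivatives lost when you differentiate the composed integrand up to order $\sim k$; the same freezing-plus-interpolation argument must then be repeated for every spatial derivative of $\Phi^n$, of its Jacobian $\pa_x\Phi^n$ (which enters through the conserved quantity $(\pa_x\Phi^n)^2\,A^n\circ\Phi^n$, correctly identified), and of the inverse flow, before you can convert flow convergence into $C([0,T];H^{k-1})$ convergence of $A^n$. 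None of this is impossible, but as written the key claim is not a consequence of \eqref{S3eq12}, and the bookkeeping you defer is precisely the mathematical content; the paper's direct energy argument avoids it entirely.
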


\begin{proof}[Proof of Theorem \ref{th-transport}] The first part of Theorem \ref{th-transport} follows  from the explicit value of $\cR_{\mathcal{A}}$ which can be obtained by using the characteristics method.

Let us present the proof of \eqref{S3eq13}. Let us denote
$A^n\eqdefa \cR_{\mathcal{A}} \bigl(U_A^n\bigr), $ $ A\eqdefa \cR_{\mathcal{A}} \bigl(V_A\bigr)$ and $\d_A^n\eqdefa A^n-A$.  Since $\zeta_A^n$ is a bounded sequences in $L^2_T(H^{k+2})$, and
$U_A^n$ is uniformly bounded in $X_A^{k+2}(T)$, we deduce from standard theory of transport equation that \eqref{S-eq-full-2-gen} has a unique solution
$A^n$ which is uniformly bounded in $C([0,T]; H^{k+1}).$  The equation \eqref{S-eq-full-2-gen} implies that $\pa_tA^n$ which is uniformly bounded in $C([0,T]; H^{k}).$ This together with Aubin-Lions Lemma ensures that
\beq \label{S3eq14}
A^n\quad\mbox{is equicontinuous in }\ \ C([0,T]; H^s)\ \forall\ s<k+1.
\eeq
 We know that there exists $f_A\in C([0,T]; H^k)$ such that  as $n\rightarrow +\infty$,
\begin{equation*}
\d_A^n \rightarrow f_A \textrm{ uniformly in } C([0,T]; H^k).
\end{equation*}
 Then one has
\beno
\pa_t\d_A^n+u_A\pa_x\d_A^n+2\pa_xu_A\d_A^n+\zeta_A^n\pa_xA^n+2\pa_x\zeta_A^nA^n=0.
\eeno
By performing $H^{k-1}$ energy estimate to the above equation, we obtain
\begin{equation} \label{S3eq15}
\begin{split}
\frac12\frac{d}{dt}\|\d_A^n(t)\|_{H^{k-1}}^2=&-\left(u_A\pa_x\d_A^n, \d_A^n\right)_{H^{k-1}} -2\left(\pa_xu_A\d_A^n, \d_A^n\right)_{H^{k-1}}\\
&-\left(\zeta_A^n\pa_xA^n, \d_A^n\right)_{H^{k-1}}-2\left(\pa_x\zeta_A^nA^n, \d_A^n\right)_{H^{k-1}}.
\end{split}
\end{equation}
By applying Moser type inequality, we find
\beno\begin{split}
\bigl|\left(u_A\pa_x\d_A^n, \d_A^n\right)_{H^{k-1}}\bigr|\leq &\sum_{\ell\leq k-1}\Bigl(\bigl|\bigl(u_A\pa_x^{\ell+1}\d_A^n, \pa_x^\ell\d_A^n\bigr)_{L^2}\bigr|\\
&\qquad+\bigl|\bigl(\pa_x^\ell(u_A\pa_x\d_A^n)-u_A\pa_x^{\ell+1}\d_A^n, \pa_x^\ell\d_A^n\bigr)_{L^2}\bigr|\Bigr)\\
\leq &C\bigl(\|\pa_x u_A\|_{L^\infty}\|\d_A^n\|_{H^{k-1}}+\|\pa_x\d_A^n\|_{L^\infty}\|u_A\|_{H^{k-1}}\bigr)\|\d_A^n\|_{H^{k-1}}\\
\leq &C\|u_A\|_{H^{k-1}}\|\d_A^n\|_{H^{k-1}}^2,
\end{split}
\eeno
where in the last inequality, we used an integration by parts the assumption that $k\geq 3$ so that $H^{k-2}\hookrightarrow L^\infty.$

Since $H^{k-1}$ is an algebra (note that $k\geq 2$), we have
\beno
\bigl|\left(\pa_xu_A\d_A^n, \d_A^n\right)_{H^{k-1}}\bigr|\leq C\|u_A\|_{H^k}\|\d_A^n\|_{H^{k-1}}^2.
\eeno
Finally, by using integration by parts, we have
\begin{align*}
&-\left(\zeta_A^n\pa_xA^n, \d_A^n\right)_{H^{k-1}}-2\left(\pa_x\zeta_A^nA^n, \d_A^n\right)_{H^{k-1}}\\
=& -\left(\zeta_A^n\pa_xA^n, \d_A^n\right)_{H^{k-1}}+2\left(\zeta_A^n A^n, \pa_x\d_A^n\right)_{H^{k-1}}.
\end{align*}

 We focus on the estimate of  $\left(\zeta_A^n\pa_xA^n, \d_A^n\right)_{H^{k-1}}$ with the other one being similar. Since $\zeta_A^n\pa_xA^n$ is uniformly bounded in $L^2_T(H^{k-1})$ and that $\d_A^n$ converges uniformly to  $f_A$ in $C([0, T]; H^k)$ sense, we have that  for any $t\in [0, T]$, as $n\rightarrow +\infty$,
\begin{equation}
\int_0^t \left(\zeta_A^n\pa_xA^n, \d_A^n\right)_{H^{k-1}} dt- \int_0^t \left(\zeta_A^n\pa_xA^n, f_A\right)_{H^{k-1}} dt\rightarrow 0.
\end{equation}
For any $\delta>0$ we can find a step function $f_m$ in $H^k$ such that $\|f_m(s)-f_A(s)\|_{H^k}\leq \delta$, $\forall s\in [0, T]$. Hence, as $n\rightarrow +\infty$,
\begin{equation}
|\int_0^t \left(\zeta_A^n\pa_xA^n, f_A\right)_{H^{k-1}} dt- \int_0^t \left(\zeta_A^n\pa_xA^n, f_m\right)_{H^{k-1}} dt|\leq C \delta
\end{equation}
with $C$ independent of $\delta$ and $n$. As $f_m(t)$ is a piecewise constant function in $H^k$, for instance assume that the discontinuous points are $\{t_1, t_2,..., t_M\}$, we can immediately conclude from  the assumption, Equation \eqref{S3eq12},  that
\begin{equation*}
 \int_0^{t_1} \left(\zeta_A^n\pa_xA^n, f_m\right)_{H^{k-1}} dt=   \left(\int_0^{t_1}\zeta_A^n\pa_xA^n dt, f_m(0)\right)_{H^{k-1}}  \rightarrow 0,
\end{equation*}
as $n\rightarrow +\infty$,  which further  implies that when $n$ tends to $+\infty$,
\begin{equation*}
 \int_0^{t} \left(\zeta_A^n\pa_xA^n, f_m\right)_{H^{k-1}} dt \rightarrow 0,
\end{equation*}
thus
\begin{equation}
\int_0^t \left(\zeta_A^n\pa_xA^n, \d_A^n\right)_{H^{k-1}} dt\rightarrow 0.
\end{equation}

Therefore,
\begin{equation}\label{eq-321}
\frac12\frac{d}{dt}\|\d_A^n(t)\|_{H^{k-1}}^2\leq  C\|u_A\|_{H^k}\|\d_A^n\|_{H^{k-1}}^2+ h^n(t),
\end{equation}
with $h^n(t)\geq 0$ satisfying
\begin{equation}\label{eq-313}
\int_0^T h^n(s) ds\rightarrow 0.
\end{equation}

Now let us present the following lemma,
\begin{lemma}\label{S2lem1}
{\sl Let $0\leq f, g \in C[0,T]$ so that
\beq\label{S2eq01}
\frac{d}{dt}f(t)\leq g(t)f(t)+h(t)\quad \forall\ t\in [0,T].
\eeq
 Then one has
\beq \label{S2eq02}
f(t)\leq f(0)e^{\int_0^tg(t')\,dt'}+\int_0^th(t')\,dt'+\int_0^tg(t')e^{\int_{t'}^tg(s)\,ds}\int_0^{t'}h(s)\,ds\,dt'.
\eeq}
\end{lemma}
By inserting equations \eqref{eq-321}--\eqref{eq-313} into Lemma \ref{S2lem1}  we get \eqref{S3eq13}.
\end{proof}

\begin{proof}[Proof of Lemma \ref{S2lem1}] We first get, by integrating \eqref{S2eq01} over $[0,t],$ that
\beq\label{S2eq03}
f(t)\leq f(0)+\int_0^tg(t')f(t')\,dt'+\int_0^th(t')\,dt',\eeq
from which, we infer
\beno
\frac{d}{dt}\int_0^tg(t')f(t')\,dt'=g(t)f(t)\leq g(t)\Bigl(f(0)+\int_0^tg(t')f(t')\,dt'+\int_0^th(t')\,dt'\Bigr),
\eeno
so that there holds
\beno
\frac{d}{dt}\Bigl(e^{-\int_0^tg(t')\,dt'}\int_0^tg(t')f(t')\,dt'\Bigr)\leq g(t)e^{-\int_0^tg(t')\,dt'}\Bigl(f(0)+\int_0^th(t')\,dt'\Bigr).
\eeno
Integrating the above inequality over $[0,t]$ gives rise to
\beno
\int_0^tg(t')f(t')\,dt'\leq \int_0^t g(t')e^{\int_{t'}^tg(s)\,ds}\Bigl(f(0)+\int_0^{t'}h(s)\,ds\Bigr)\,dt'
\eeno
Inserting the above inequality into \eqref{S2eq03}
and using the fact that
\beno  \int_0^t g(t') e^{\int_{t'}^t g(s)\,ds}\,dt'=e^{\int_{0}^t g(t')\,dt'}- 1\eeno
leads to \eqref{S2eq02}.
\end{proof}
\\

\noindent$\bullet$ {\bf Step 3.}
We investigate the linear hyperbolic system:
\begin{equation}\label{S-eq-full-3-recall}
\left\{\begin{array}{l}
\displaystyle \pa_t\r_1+\pa_x\left(\bigl(u_0+\zeta_0\bigr)\r_1+\bigl(u_1+\zeta_1\bigr)\r_0\right)=f,
 \\
 \displaystyle  \pa_tu_1+\bigl(u_0+\xi_0)\pa_x\left(u_1+\xi_1\right)+\left(u_1+\xi_1\right)\pa_x\bigl(u_0+\xi_0)+\pa_x\r_1=\eta_1,\\
\displaystyle \r_1(0)= g_1, u_1(0)= v_1.
\end{array}\right.
\end{equation}
 Let us denote
\beno
U_1\eqdefa (g_1, v_1,  \xi_1, \zeta_1, f, \eta_1, \r_0, u_0,  \xi_0, \zeta_0) \andf \cR_1\bigl(U_1\bigr)\eqdefa (\r_1, u_1),
\eeno
and
\beno
\begin{split}
X_1^k(T)\eqdefa & \{ U_1\in H^{k-2}\times H^{k-2}_0\times L^2_T(H^{k-1}_0)\times L^2_T(H^{k-1}_0)\times L^2_T(H^{k-2}_0) \times L^2_T(H^{k-2}_0)\\
&\times L^2_T(H^{k-1})\times L^2_T(H^{k-1}_0) \times L^2_T(H^{k-1}_0)\times L^2_T(H^{k-1}_0); \; \rho_0>0 \textrm{ in } \mathbb{T}\},\\
Y_1^k(T)\eqdefa & C([0,T];H^{k-2})\times C([0,T];H^{k-2}_0).
\end{split} \eeno

Due to \eqref{S-eq-full-3-recall} is a linear symmetric hyperbolic system, similar well-posedness theorem as Theorem \ref{th3.1} holds
for $\cR_1\bigl(U_1\bigr).$ Moreover, there holds
\begin{theo}\label{th3.4}
{\sl Let $\zeta_0^n, \zeta_1^n$ be bounded sequences in $L^2_T(H^{k+1}_0)$ and $\zeta_0^n, \zeta_1^n$ satisfy
\begin{equation*}
\int_0^{t} \zeta_0^n(s) \chi_0^n(s) ds \andf \int_0^{t} \zeta_1^n(s) \chi_0^n(s) ds\rightarrow 0 \textrm{ in }H^{k-2}
\end{equation*}
for any $t\in [0, T]$ and for any uniformly equicontinuous sequence $\chi_0^n: [0, T]\rightarrow H^{k-2}$.
Let
\begin{gather*}
U_1^n=(g_1,  v_1, \xi_1, 0, f, \eta_1, \r_0, u_0,  \xi_0, 0)\in X^{k+2}_1(T),\\
V_1^n=(g_1,  v_1, \xi_1, \zeta_1^n, f, \eta_1, \r_0, u_0,  \xi_0, \zeta_0^n)\in X^{k+2}_1(T).
\end{gather*}
Suppose that corresponding to $U_1^n$,  the equation \eqref{S-eq-full-3-recall} have solutions in $Y^{k+2}_1(T)$ being uniformly bounded in $Y^{k}_1(T)$. Then, for $n$ sufficiently large  the equation \eqref{S-eq-full-3-recall} corresponding $V^n_1$ have solutions  in $Y^{k+2}_1(T)$, moreover,   there holds
 Then there holds
\begin{equation}
\mathcal{R}_1\bigl(U_1^n\bigr)-\mathcal{R}_1\bigl(V_1^n\bigr)\rightarrow 0 \textrm{ in } Y_1^k(T)  \textrm{ as } n\rightarrow +\infty.  \notag
\end{equation}}
\end{theo}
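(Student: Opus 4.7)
The plan is to adapt the argument of Step~2 (used to prove Theorem~\ref{th-transport}) to the coupled linear symmetric hyperbolic system \eqref{S-eq-full-3-recall}. Write $(\r_1, u_1) \eqdefa \cR_1(U_1^n)$ (which in fact does not depend on $n$, since the $\zeta$-slots of $U_1^n$ vanish) and $(\wt\r_1^n, \wt u_1^n) \eqdefa \cR_1(V_1^n)$. First I would establish existence of $(\wt\r_1^n, \wt u_1^n)$ in $Y_1^{k+2}(T)$ via the linear hyperbolic well-posedness theorem alluded to just before the statement of Theorem~\ref{th3.4}; because the $\zeta^n$'s, although oscillating and bounded only in $L^2_T(H^{k+1})$, are regular enough for the standard $H^k$ energy estimates for linear symmetric hyperbolic systems to close on the fixed interval $[0,T]$ for $n$ large.

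Subtracting the two instances of \eqref{S-eq-full-3-recall} yields, for $\d\r^n \eqdefa \r_1 - \wt\r_1^n$ and $\d u^n \eqdefa u_1 - \wt u_1^n$, a linear symmetric hyperbolic system with zero initial data and with a single forcing term on the $\r_1$-equation, namely $\pa_x\bigl(\zeta_0^n \wt\r_1^n + \zeta_1^n \r_0\bigr)$. The $u_1$-equation produces no $\zeta^n$ forcing since $U_1^n$ and $V_1^n$ share identical $\xi$-slots. Using the symmetrizer $\mathrm{diag}(1, \r_0)$ (legitimate thanks to $\r_0 > 0$), together with uniform bounds in $Y_1^k(T)$ on both solutions and the uniform $L^2_T(H^{k+1})$ bound on $\zeta^n$, the Aubin--Lions lemma lets me extract a subsequence along which $(\d\r^n, \d u^n)$ converges uniformly in $C([0,T]; H^s)$ for every $s < k-2$ to a limit $(f_\r, f_u) \in C([0,T]; H^{k-2})$.

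The core of the argument is an $H^{k-3}$ energy estimate for $(\d\r^n, \d u^n)$ based on Moser-type commutator bounds, exactly parallel to \eqref{S3eq15}, which produces
\[
\tfrac{1}{2}\frac{d}{dt}\bigl(\|\d\r^n\|_{H^{k-3}}^2 + \|\r_0^{1/2}\d u^n\|_{H^{k-3}}^2\bigr) \leq C\bigl(\|\d\r^n\|_{H^{k-3}}^2+\|\d u^n\|_{H^{k-3}}^2\bigr) + R^n(t),
\]
where, after integration by parts, $R^n(t) = \bigl(\zeta_0^n \wt\r_1^n + \zeta_1^n \r_0,\ \pa_x \d\r^n\bigr)_{H^{k-3}}$. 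The hard step is to prove $\int_0^T R^n(t)\,dt \to 0$: one replaces $\pa_x \d\r^n$ by $\pa_x f_\r$ up to an $o(1)$ error in $C([0,T]; H^{k-3})$, approximates $\pa_x f_\r$ on a partition $\{t_0,\dots,t_M\}$ by a step function in $H^{k-2}$ with error $\mathcal{O}(\delta)$, and then on each subinterval the resulting integrand is $\bigl(\zeta_0^n \wt\r_1^n + \zeta_1^n \r_0, \chi\bigr)_{H^{k-3}}$ for a constant (hence equicontinuous) test vector $\chi$, so the hypothesis forces these integrals to zero. An additional Aubin--Lions argument yields equicontinuity of $\wt\r_1^n$ in the relevant space, which is what allows $\zeta_0^n \wt\r_1^n$ to be absorbed into the test-sequence slot covered by the oscillation hypothesis.

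Applying the Gronwall-type Lemma~\ref{S2lem1} then yields $\|(\d\r^n, \d u^n)(t)\|_{H^{k-3}} \to 0$ uniformly in $t$, and interpolation with the uniform $H^{k-2}$ bound upgrades the convergence to $C([0,T]; H^{k-2}) = Y_1^k(T)$; the standard ``every subsequence has a further convergent subsequence with limit $0$'' argument extends the convergence from the extracted subsequence to the full sequence. I anticipate that the main technical obstacle will lie in the step-function approximation step: unlike in Step~2, where the driving field $u_A$ was $n$-independent, here the equicontinuity of $\wt\r_1^n$ (essential for applying the $\zeta^n$-oscillation hypothesis to $\zeta_0^n \wt\r_1^n$) must be deduced from bounds on the $V_1^n$ solutions themselves, which are an output rather than an input of the theorem, so an a~priori uniform $Y_1^k(T)$ bound on $\wt\r_1^n$ must be bootstrapped before the final Gronwall step can be closed.
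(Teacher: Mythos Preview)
Your proposal is correct and follows essentially the same approach as the paper, which treats Theorem~\ref{th3.4} as an immediate adaptation of the Step~2 argument (Theorem~\ref{th-transport}) to the linear symmetric hyperbolic system \eqref{S-eq-full-3-recall} and gives no further details. The one minor deviation is that you run the energy estimate at level $H^{k-3}$ and then interpolate, whereas the template of Step~2 carries the estimate directly at the target regularity (here $H^{k-2}$, since $Y_1^k=C([0,T];H^{k-2})^2$); this works because $(\wt\r_1^n,\wt u_1^n)\in Y_1^{k+2}(T)\subset C([0,T];H^k)$ provides enough room, and your interpolation step is then unnecessary but harmless.
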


By summarizing the above steps, we conclude the proof of Theorems \ref{th3.5} and \ref{thm-con}.
\end{proof}

\medskip

\setcounter{equation}{0}
\section{The control of the limit systems}\label{sec-cont-lim}

The goal of this section is to present the proof of Theorem \ref{thm-con-lim-main}.
 Before going into the detailed  proof, here we heuristically outline  the main ideas.
\begin{itemize}
\item[1)] In  Section \ref{sub-sec-41}, we construct a sequence of  finite dimensional spaces
\begin{equation*}
E_0\subset E_1\subset E_2\subset ...\subset E_n...
\end{equation*}
whose union is dense in $H^s(\mathbb{T})$.
\item[2)] Thanks to the preceding step, for $\forall \varepsilon>0$, for any given initial data $(\vec{v}, \vec{g})\in H^{k+2}_0\times H^{k}\times H^{k+2}_0\times H^{k}$, $A_0\in H^{k+1}$,  and final state $(\wh{\vec{v}}, \wh{\vec{g}})\in H^{k+2}_0\times H^{k}\times H^{k+2}_0\times H^{k}$ satisfying the conservation of mass, there exists
$E_N-$valued control $\vec{\eta}^{(N)}$ such that the solution
\begin{equation*}
(\vec{u}^{(N)}, \vec{\r}^{(N)}, A^{(N)})\eqdefa \cR \bigl(\vec{v}, \vec{g}, A_0, 0, 0, \vec{\eta}^{(N)}\bigr)\in Y^{k+2}(T),
\end{equation*}
verifies that in the  $ H^{k}_0\times H^{k-2}\times H^{k}_0\times H^{k-2}$ space
\begin{equation*}
\|(\vec{u}^{(N)}(T), \vec{\r}^{(N)}(T))- (\wh{\vec{v}}, \wh{\vec{g}})\|<\varepsilon.
\end{equation*}
Section \ref{sub-sec-42}, more precisely, Theorem \ref{thm-cons-EN}, is devoted to this step.

\item[3)] Next, as the most important step of the proof, in Section \ref{sec:reduc-dim} we show that there exists  $E_{N-1}-$valued control $\vec{\eta}^{(N-1)}$ such that the solution
\begin{gather*}
(\vec{u}^{(N-1)}, \vec{\r}^{(N-1)}, A^{(N-1)})\eqdefa \cR \bigl(\vec{v}, \vec{g}, A_0, 0, 0, \vec{\eta}^{(N-1)}\bigr)\in Y^{k+2}(T),\\
\|(\vec{u}^{(N)}(T), \vec{\r}^{(N)}(T))- (\vec{u}^{(N-1)}(T), \vec{\r}^{(N-1)}(T))\|<\varepsilon,\\
\| \vec{\r}^{(N)}- \vec{\r}^{(N-1)}\|_{C([0, T]; H^k\times H^{k-2})}<\varepsilon,
\end{gather*}
then we can further find $E_{N-2}-$valued control $\vec{\eta}^{(N-2)}$... $E_{0}-$valued control $\vec{\eta}^{(0)}$ to approximate the first constructed trajectory $(\vec{u}^{(N)}, \vec{\r}^{(N)}, A^{(N)})$.   In the terminology of Agrachev--Sarychev method, this   procedure is called ``extension" or  ``saturation", see for instance \cite[Section 5.2]{Agrachev-Sarychev-2005-JMFM}.  \\More precisely, this is again divided into several steps. At first, in Section \ref{subsec-431}--\ref{subsec-433}, we find $\vec{\eta}, \vec \mu_{n} \in E^{N-1}$   such that
\begin{gather*}
 \bigl(\vec{v}, \vec{g}, A_0, \vec \mu_{n}, 0, \bar{\vec{\eta}}\bigr) \textrm{ is uniformly bounded in } X^{k+2}(T), \\
\mathcal{R} \bigl(\vec{v}, \vec{g}, A_0, \vec \mu_{n}, 0, \bar{\vec{\eta}}\bigr) \textrm{ is uniformly bounded in } Y^{k+2}(T), \\
\|\mathcal{R} \bigl(\vec{v}, \vec{g}, A_0, \vec \mu_{n}, 0, \bar{\vec{\eta}}\bigr)- \cR \bigl(\vec{v}, \vec{g}, A_0, 0, 0, \vec{\eta}^{(N)}\bigr)\|_{Y^{k}(T)}\rightarrow 0\text{ as $n\rightarrow+\infty$}.
\end{gather*}
Then in Section \ref{subsec-434} we show that thanks to Theorem \ref{thm-con},
\begin{gather*}
\mathcal{R} \bigl(\vec{v}, \vec{g}, A_0, \vec \mu_{n}, \vec \mu_{n}, \bar{\vec{\eta}}\bigr) \textrm{ is uniformly bounded in } Y^{k+2}(T), \\
\|\mathcal{R} \bigl(\vec{v}, \vec{g}, A_0, \vec \mu_{n}, \vec \mu_{n}, \bar{\vec{\eta}}\bigr)-\mathcal{R} \bigl(\vec{v}, \vec{g}, A_0, \vec \mu_{n}, \vec \mu_{n}, \bar{\vec{\eta}}\bigr)\|_{Y^{k}(T)}\rightarrow 0
\text{ as $n\rightarrow+\infty$}.
\end{gather*}
Next, in Section \ref{subsec-435} we approximate $\vec \mu_{n}$ by smooth (in time) functions  $\vec \mu_{n}^m$. Finally, in Section  \ref{subsec-436} we construct the required $\vec{\eta}^{(N-1)}$.

 \item[4)]
Let us emphasize  here that during the whole precess $\vec \rho$ is close to $\vec \rho^{(N)}$ in $H^{k}$, thus in $C(\mathbb{T})$ and being uniformly away from 0, while $\vec u$ is not necessarily always close to $\vec u^{(N)}$ though $\vec u(T)$  is close to $\vec u^{(N)}(T)$ in $H^k$ sense.

\end{itemize}

\subsection{Introduce  the controlling space}\label{sub-sec-41}
We define
\beq\label{S4eq9}
\begin{split}
E_0\eqdefa & E=  \textrm{span} \{\sin x, \cos x\},\\
E_n\eqdefa &\textrm{span} \bigl\{\sin x, \cos x, \sin 2x, \cos 2x,..., \sin (n+1)x, \cos (n+1)x\ \bigr\},\\
\mathbf{E}_n\eqdefa & E_n\times E_n.
\end{split} \eeq
It is obvious that $E_n\subset C^{\infty}(\mathbb{T})$ and
\begin{lemma}
$\mathbf{E}_{\infty}\eqdefa \bigcup\limits_{k=0}^{\infty} \mathbf{E}_k$ is dense in $H^{s}_0(\mathbb{T})\times H^{s}_0(\mathbb{T}), \forall s>0$.
\end{lemma}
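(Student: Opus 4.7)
The plan is to reduce this density statement to the standard fact that zero-mean trigonometric polynomials are dense in $H^s_0(\TT)$. First, I would note that any real-valued $f \in H^s_0(\TT)$ admits a Fourier expansion
\[
f(x) = \sum_{k \geq 1}\bigl(a_k \cos(kx) + b_k \sin(kx)\bigr),
\]
with no constant term, since the zero-mean condition $\int_\TT f\,dx = 0$ kills the $k=0$ coefficient. By Parseval's identity, the $H^s$ norm is equivalent to $\bigl(\sum_{k \geq 1}(1+k^2)^s(a_k^2 + b_k^2)\bigr)^{1/2}$, which is finite by assumption.

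Next, I would define the Fourier truncation $P_N f \eqdefa \sum_{k=1}^{N}(a_k \cos(kx) + b_k \sin(kx))$. By the definition \eqref{S4eq9} of $E_n$, whose basis runs from $\sin x, \cos x$ up to $\sin((n+1)x), \cos((n+1)x)$, one has $P_N f \in E_{N-1}$ as soon as $N \geq 1$. The Fourier tail estimate
\[
\norm{f - P_N f}{H^s}^2 \simeq \sum_{k > N}(1+k^2)^s(a_k^2 + b_k^2)
\]
is the tail of a convergent series, and hence tends to $0$ as $N \to \infty$.

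Finally, given $(f, g) \in H^s_0(\TT) \times H^s_0(\TT)$ and $\varepsilon > 0$, I would choose $N$ large enough that $\norm{f - P_N f}{H^s}$ and $\norm{g - P_N g}{H^s}$ are both smaller than $\varepsilon/2$. Then $(P_N f, P_N g) \in \mathbf{E}_{N-1} \subset \mathbf{E}_\infty$ lies within $\varepsilon$ of $(f,g)$ in the product $H^s$ norm, proving the claim. There is essentially no obstacle here; it is a direct consequence of the classical density of trigonometric polynomials in $H^s(\TT)$, together with the bookkeeping observation that the zero-mean constraint built into $H^s_0$ exactly matches the absence of the constant mode in each $E_n$.
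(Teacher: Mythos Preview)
Your proof is correct. The paper does not prove this lemma at all, presenting it as evident immediately after the definitions; your argument is precisely the standard Fourier-truncation justification one would supply to make that ``obvious'' claim explicit.
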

Moreover, one has the following ``Lie bracket" type argument.
\begin{lemma}\label{lem-E}
{\sl Let $n\in \mathbb{N}$. For any $\psi \in E_{n+1}$, there exists $\f, \varphi^1,... \varphi^p \in E_n$ such that
\begin{gather}\label{S4eq10}
\psi=\f- \sum_{i=1}^{p} \varphi^i \pa_x \varphi^i.
\end{gather}}
\end{lemma}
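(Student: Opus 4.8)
The plan is to prove \eqref{S4eq10} by an explicit trigonometric computation, exploiting the product-to-sum formulas. The key point is that for the generators $\varphi = \sin(jx)$ or $\cos(jx)$ with $1 \le j \le n+1$, the product $\varphi \pa_x \varphi$ lives in $E_{2n+2}$ but, crucially, its "high-frequency" component at wavenumber $2j$ has a prescribed form, while lower-frequency contributions can be absorbed into $\f \in E_n$. Concretely, $\sin(jx)\pa_x\sin(jx) = j\sin(jx)\cos(jx) = \tfrac{j}{2}\sin(2jx)$ and $\cos(jx)\pa_x\cos(jx) = -j\cos(jx)\sin(jx) = -\tfrac{j}{2}\sin(2jx)$; more usefully, taking $\varphi = \sin(jx) + \cos(jx)$ gives $\varphi\pa_x\varphi = \tfrac12\pa_x(\varphi^2) = \tfrac12\pa_x\bigl(1 + \sin(2jx)\bigr) = j\cos(2jx)$, and similarly with a sign change one produces $\pm j\sin(2jx)$ from combinations like $(\sin(jx)\pm\cos((j+1)x))$ after separating the cross term.

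The second step is to check that these building blocks are rich enough. Since $E_{n+1} = \mathrm{span}\{\sin x,\cos x,\dots,\sin((n+2)x),\cos((n+2)x)\}$, it suffices to realize each $\sin(mx)$ and $\cos(mx)$ for $m \le n+2$ as $\f - \sum_i \varphi^i\pa_x\varphi^i$ with $\f,\varphi^i \in E_n$. For $m \le n$ this is trivial: take $\f = \sin(mx)$ (resp.\ $\cos(mx)$) and no $\varphi^i$ terms. For $m = n+1$ and $m = n+2$ we need the product terms. Writing $m = j + j'$ with $j,j'$ chosen in $\{1,\dots,n\}$ (possible precisely because $m \le n+2 \le 2n$ when $n \ge 2$; the small cases $n = 0,1$ are handled separately by direct inspection, noting $E_0 = E$ and the statement for $E_1$ only requires frequencies up to $2$, realizable from frequency-$1$ generators), one uses $\pa_x(\varphi\psi') $-type identities: e.g.\ $(\sin(jx)+\sin(j'x))\pa_x(\sin(jx)+\sin(j'x))$ expands into $\tfrac12\pa_x(\sin^2 + \sin^2) $ (which is a combination of $\sin(2jx),\sin(2j'x) \in E_{?}$, to be pushed into lower order or further decomposed) plus the genuinely new cross term $\pa_x(\sin(jx)\sin(j'x)) = \tfrac12\pa_x(\cos((j-j')x) - \cos((j+j')x))$, whose $(j+j')$-part is exactly $\tfrac{j+j'}{2}\sin(mx)$. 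Rearranging and iterating the decomposition on the self-terms $\sin(2jx)$ (which themselves need $2j \le n$, or else a further splitting) yields the claim; one organizes this as a finite downward induction on the frequency.

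I expect the main obstacle to be purely bookkeeping: ensuring that every auxiliary self-term $\sin(2jx)$, $\cos(2jx)$ generated in the cross-term trick can itself be written in the required form with generators in $E_n$, i.e.\ keeping all intermediate frequencies $\le n+1$ so the induction closes, and treating the edge cases $n=0,1$ where $E_n$ is too small to split frequencies as sums of two elements of $\{1,\dots,n\}$. The cleanest route is probably to prove first the single identity that for any $1 \le j \le n$ one has $\cos(2jx) = j^{-1}(\sin(jx)+\cos(jx))\pa_x(\sin(jx)+\cos(jx)) \in \f - \varphi\pa_x\varphi$ form with $\f = 0$, and $\sin(2jx) \in$ such a form as well, so that all of $E_{2n} \supseteq E_{n+1}$ is reached; then the statement follows immediately since $E_{n+1} \subset E_{2n}$ for $n \ge 1$ and the case $n=0$ is the direct computation $\sin 2x = \pa_x(\ldots)$, $\cos 2x = (\sin x + \cos x)\pa_x(\sin x + \cos x)$.
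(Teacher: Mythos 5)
Your middle paragraph contains the correct ingredients, and if carried out it would essentially reproduce the paper's proof: for an even new frequency $n+2=2m$ a single square such as $\cos(mx)\,\partial_x\cos(mx)=-\tfrac m2\sin(2mx)$ suffices, while for an odd new frequency $n+2=2m+1$ one squares a sum of two adjacent frequencies, e.g. $(\sin((m+1)x)\pm\sin(mx))\partial_x(\sin((m+1)x)\pm\sin(mx))$, cancels the single out-of-range even term at frequency $2m+2$ by adding $\cos((m+1)x)\,\partial_x\cos((m+1)x)$, and absorbs the leftover $\tfrac m2\sin(2mx)\mp\tfrac12\sin x$ (frequency $\le n+1$) into $\phi\in E_n$; in particular no ``downward induction'' is needed. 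The genuine gap is that the route you actually commit to at the end --- realize $\pm\sin(2jx),\pm\cos(2jx)$ for $j\le n$ from single-frequency squares and conclude that ``all of $E_{2n}\supseteq E_{n+1}$ is reached'' --- fails. If $\varphi=\alpha\sin(jx)+\beta\cos(jx)$ is a single-frequency element, then $\varphi\,\partial_x\varphi=\tfrac12\partial_x(\varphi^2)$ contains only the frequency $2j$, so sums of such squares together with $\phi\in E_n$ span only $E_n$ plus the even frequencies $2,4,\dots,2n+2$. Every odd frequency in $(n+1,2n+2]$ is missed; in particular $\sin((n+2)x),\cos((n+2)x)$ are unreachable this way whenever $n$ is odd (already for $n=1$, frequency $3$). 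So the statement does not ``follow immediately since $E_{n+1}\subset E_{2n}$''; the odd case requires the two-frequency cross terms, i.e. exactly the part of your argument left as ``rearranging and iterating,'' which is where the work lies.

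Two smaller points. The lemma's form is $\psi=\phi-\sum_i\varphi^i\partial_x\varphi^i$ with a fixed minus sign: positive scalars can be absorbed by rescaling $\varphi^i$, but signs cannot, so you must realize all four targets $\pm\sin((n+2)x)$, $\pm\cos((n+2)x)$ separately (the paper does this explicitly); several of your identities carry the wrong sign for this form, e.g. $\cos(2jx)=j^{-1}(\sin(jx)+\cos(jx))\partial_x(\sin(jx)+\cos(jx))$ is of the type $+\varphi\partial_x\varphi$, and for the required form one should instead square $\sin(jx)-\cos(jx)$. Also, restricting the splitting $m=j+j'$ to $j,j'\le n$ is needlessly severe, since the generators of $E_n$ go up to frequency $n+1$; with $j'\le n+1$ allowed (the paper takes $j=m$, $j'=m+1$) the construction works for all $n\ge1$, whereas your parenthetical for the small cases only settles $n=0$, and $n=1$ (target frequency $3$) still needs the cross term of frequencies $1$ and $2$.
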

\begin{proof} Notice that  $E_{n+1}= E_n\oplus \textrm{span} \{ \pm \sin (n+2)x, \pm \cos (n+2)x\}$.
It suffices to prove \eqref{S4eq10} with $\psi= \pm \sin (n+2)x$ or $\pm \cos (n+2)x.$  In the case when $n+2= 2m$, we have
\begin{align*}
+ \sin 2mx=& -\frac{2}{m} \cos  mx \pa_x \cos mx,\\
- \sin 2mx=& -\frac{2}{m} \sin  mx \pa_x \sin mx,\\
+\cos 2mx=& -\frac{1}{m} (\sin mx- \cos mx)\pa_x (\sin mx- \cos mx),\\
-\cos 2mx=& -\frac{1}{m} (\sin mx+ \cos mx)\pa_x (\sin mx+ \cos mx).
\end{align*}
While when $n+2=2m+1$,  we observe that
\begin{align*}
\bigl(\sin&(m+1)x \pm\sin mx)\pa_x (\sin(m+1)x\pm \sin mx\bigr)\\
=&\frac{m+1}{2}\sin (2m+2)x+ \frac{m}{2} \sin (2m)x\pm(m+1)\sin mx \cos (m+1)x\\
&\quad \pm m \sin (m+1)x \cos mx\\
=&\frac{m+1}{2}\sin (2m+2)x+ \frac{m}{2} \sin (2m)x \pm \frac{2m+1}{2} \sin (2m+1)x \mp\frac{1}{2} \sin x,
\end{align*}
which gives
\begin{align*}
-\frac{2m+1}{2} \sin (2m+1)x= &-(\sin(m+1)x+\sin mx)\pa_x (\sin(m+1)x+\sin mx)\\
&+ \frac{m+1}{2}\sin (2m+2)x+ \frac{m}{2} \sin 2mx- \frac{1}{2}\sin x\\
=&-(\sin(m+1)x+\sin mx)\pa_x (\sin(m+1)x+\sin mx)\\
&-  \cos  (m+1)x \pa_x \cos (m+1)x+ \Big(\frac{m}{2} \sin 2mx- \frac{1}{2}\sin x \Big),
\end{align*}
and
\begin{align*}
+\frac{2m+1}{2} \sin (2m+1)x=&-(\sin(m+1)x-\sin mx)\pa_x (\sin(m+1)x-\sin mx)\\
&+ \frac{m+1}{2}\sin (2m+2)x+ \frac{m}{2} \sin 2mx+ \frac{1}{2}\sin x \\
=&-(\sin(m+1)x-\sin mx)\pa_x (\sin(m+1)x-\sin mx)\\
&-  \cos  (m+1)x \pa_x \cos (m+1)x+ \Big(\frac{m}{2} \sin 2mx+ \frac{1}{2}\sin x \Big).
\end{align*}
Thanks to the fact that $\sin(m+1)x+\sin mx, \cos (m+1)x, \frac{m}{2} \sin 2mx- \frac{1}{2}\sin x \in E_n,$ we obtain \eqref{S4eq10}
for $\psi= \pm \sin (2m+1)x. $

Along the same line, we deduce from the fact
\begin{align*}
\bigl(\sin&(m+1)x\pm\cos mx)\pa_x (\sin(m+1)x\pm \cos mx\bigr)\\
=&\frac{m+1}{2}\sin (2m+2)x- \frac{m}{2} \sin (2m)x\pm(m+1)\cos mx \cos (m+1)x\\
&\quad \mp m \sin (m+1)x \sin mx\\
=&\frac{m+1}{2}\sin (2m+2)x- \frac{m}{2} \sin (2m)x \pm \frac{2m+1}{2} \cos (2m+1)x \pm\frac{1}{2} \cos x,
\end{align*}
that
\begin{align*}
-\frac{2m+1}{2} \cos (2m+1)x=&-(\sin(m+1)x+\cos mx)\pa_x (\sin(m+1)x+\cos mx)\\
&+ \frac{m+1}{2}\sin (2m+2)x- \frac{m}{2} \sin 2mx+ \frac{1}{2}\cos x \\
=&-(\sin(m+1)x+\cos mx)\pa_x (\sin(m+1)x+\cos mx)\\
&-  \cos  (m+1)x \pa_x \cos (m+1)x+ \Big(-\frac{m}{2} \sin 2mx+ \frac{1}{2}\cos x \Big),
\end{align*}
and
\begin{align*}
+\frac{2m+1}{2} \cos (2m+1)x=&-(\sin(m+1)x-\cos mx)\pa_x (\sin(m+1)x-\cos mx)\\
&+ \frac{m+1}{2}\sin (2m+2)x- \frac{m}{2} \sin 2mx- \frac{1}{2}\cos x \\
=&-(\sin(m+1)x-\cos mx)\pa_x (\sin(m+1)x-\cos mx)\\
&-  \cos  (m+1)x \pa_x \cos (m+1)x+ \Big(-\frac{m}{2} \sin 2mx- \frac{1}{2}\cos x \Big).
\end{align*}
This shows that  \eqref{S4eq10} holds
for $\psi= \pm \cos (2m+1)x. $ This ends the proof of the lemma.
\end{proof}

\subsection{Controllability of \eqref{S2eq1} with $(\mathbf{E}_N)-$valued controls}\label{sub-sec-42}

The main result states as follows:

\begin{theo}\label{thm-cons-EN}
{\sl Let $T>0,$  $\vec{v}, \wh{\vec{v}}\in H^{k+1}_0\times H^{k-1}_0,$  $\vec{g},\wh{\vec{g}}\in H^{k+1}\times H^{k-1}$ and
$A_0\in H^k$,  which satisfy
\beq \label{S4eq3}
\int_{\TT}g_0\,dx=\int_{\TT}\wh{g}_0\,dx=\al_0,\quad \int_{\TT}{g}_1\,dx=\int_{\TT}\wh{g}_1\,dx=\al_1 \andf g_0, \wh{g}_0\geq c_0>0.
\eeq
Then for any $\e>0,$ there exists an some $N\in \mathbb{N}^*$, some  $E_N-$valued control $\vec{\eta}^{(N)}$, and some $\hat{A}(T)\in H^{k}$ such that
\begin{equation}\label{S4eq1}
\bigl\|(\wh{\vec{v}}, \wh{\vec{g}}, \hat A(T))-\cR_T\bigl(\vec{v}, \vec{g}, A_0, 0, 0, \vec{\eta}^{(N)}\bigr)\bigr\|_{Z^k}\leq \varepsilon,
\end{equation}
where $Z^k\eqdefa  H^k_0\times H^{k-2}_0\times H^k\times H^{k-2}\times H^{k-1}.$
}
\end{theo}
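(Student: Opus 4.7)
Given the cascade structure of \eqref{S2eq1}, namely $(\rho_0,u_0)\to A\to(\rho_1,u_1)$, the plan is to solve the two controllability problems in sequence and patch them together via the continuity of the solution operator in Theorem \ref{th3.5}. The controls $\eta_0$ and $\eta_1$ act in decoupled blocks, so the scheme is: first drive $(\rho_0,u_0)$ close to $(\hat g_0,\hat v_0)$ with a finite-dimensional $\eta_0$; then let the transport equation propagate $A$ (no control is needed, this is handled by Theorem \ref{th-transport}); finally drive the linear block $(\rho_1,u_1)$ close to $(\hat g_1,\hat v_1)$ with a finite-dimensional $\eta_1$, using $(\rho_0,u_0,A)$ as frozen coefficients.

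The first and most delicate stage is the approximate controllability of the Saint--Venant-type system for $(\rho_0,u_0)$ with only the finite-dimensional momentum control $\eta_0$. This is precisely the situation treated by Nersisyan in \cite{2011-Nersisyan-CPDE}, and I would invoke his result (or mimic his construction) to obtain an $E_{N_1}$--valued $\eta_0^{\sharp}$ producing a solution $(\rho_0^\sharp,u_0^\sharp)\in Y_0^k(T)$ with $\rho_0^\sharp$ uniformly strictly positive and $\|(\rho_0^\sharp,u_0^\sharp)(T)-(\hat g_0,\hat v_0)\|_{H^k\times H^k}\le\varepsilon/3$. The underlying construction builds a smooth reference trajectory $(\bar\rho_0,\bar u_0)$ with $\bar\rho_0>0$ and $\int_{\TT}\bar\rho_0\,dx=\alpha_0$ joining $(g_0,v_0)$ to $(\hat g_0,\hat v_0)$; enforcing the continuity equation exactly determines $\bar u_0$ from $\bar\rho_0$ up to an additive function of $t$, which is tuned to meet the endpoint velocities. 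The resulting smooth source $\bar\eta_0=\pa_t\bar u_0+\bar u_0\pa_x\bar u_0+\pa_x\bar\rho_0$ is then approximated by a control in $E_{N_1}$ via density of $\bigcup_N E_N$ in $L^2_T(H^{k+1})$ combined with Theorem \ref{th3.1}(iii).

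With $u_0^\sharp$ fixed, the transport equation for $A$ admits by Theorem \ref{th-transport} a unique solution $A^\sharp\in C([0,T];H^{k-1})$, which is taken as the $\hat A$ appearing in \eqref{S4eq1}. Freezing $(\rho_0^\sharp,u_0^\sharp,A^\sharp)$ turns the last two equations of \eqref{S2eq1} into a linear symmetric hyperbolic system (with symmetrizer $\mathrm{diag}(1,\rho_0^\sharp)$) driven by the finite-dimensional control $\eta_1$ and forced by $\pa_x A^\sharp$. The reference-trajectory construction is now repeated in this linear setting: one builds smooth $(\bar\rho_1,\bar u_1)$ joining $(g_1,v_1)$ to $(\hat g_1,\hat v_1)$, exploiting that the compatibility $\int_{\TT}\bar\rho_1\,dx\equiv\alpha_1$ guaranteed by \eqref{S4eq3} is exactly what allows the $\rho_1$-continuity equation to be solved along the trajectory. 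The corresponding smooth $\bar\eta_1$ is then approximated by an $E_{N_2}$--valued $\eta_1^\sharp$ using the linear analogue of Theorem \ref{th3.1}(iii).

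Setting $N=\max(N_1,N_2)$ and $\vec\eta^{(N)}=(\eta_0^\sharp,\eta_1^\sharp)$, the continuous dependence in Theorem \ref{th3.5}(iii) ensures that the solution of the full coupled system \eqref{S2eq1} with this $\vec\eta^{(N)}$ stays close in $Y^k(T)$ to the solution assembled by solving the three sub-systems in sequence, so that \eqref{S4eq1} holds with total error at most $\varepsilon$. The main obstacle is Stage~1: the continuity equation for $\rho_0$ admits no direct control, so the reference trajectory must satisfy it \emph{exactly}, leaving only the additive-in-$t$ freedom to match velocities at both endpoints. This is precisely the difficulty resolved by Nersisyan's finite-dimensional controllability for compressible Euler, which is the key external input; the linear Stage~3 and the combination step are comparatively routine.
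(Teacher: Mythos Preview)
There is a genuine gap in your construction of the reference trajectories. You claim that enforcing $\pa_t\bar\rho_0+\pa_x(\bar u_0\bar\rho_0)=0$ determines $\bar u_0$ from $\bar\rho_0$ ``up to an additive function of $t$, which is tuned to meet the endpoint velocities.'' This is incorrect on two counts. First, the equation fixes $\bar u_0\bar\rho_0$ up to an additive $c(t)$, hence $\bar u_0$ only up to $c(t)/\bar\rho_0(t,x)$; the zero-mean constraint $\int_{\TT}\bar u_0\,dx=0$ (recall $u_0=\pa_xS_0$) then pins $c(t)$ uniquely, so there is no freedom left. Second, even if there were, a single scalar $c(0)$ could never force $\bar u_0(0,\cdot)$ to match an arbitrary prescribed profile $v_0(\cdot)\in H^{k+1}_0$. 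The same obstruction recurs verbatim in your Stage~3 for $\bar u_1$. This is precisely why Nersisyan's construction (and the paper's) does \emph{not} attempt to match endpoint velocities through the continuity equation: one instead fixes $(\bar\rho_0,\bar u_0)$ as the linear interpolation, introduces an auxiliary drift $\xi_0$ solving $\pa_x(\bar\rho_0\xi_0)=-\pa_t\bar\rho_0-\pa_x(\bar\rho_0\bar u_0)$, and then absorbs $\xi_0$ into the control via the identity
\[
\cR_T\bigl(\vec v,\vec g,A_0,\vec\xi^{\,\delta},\vec\xi^{\,\delta},\vec\eta\bigr)=\cR_T\bigl(\vec v,\vec g,A_0,0,0,\vec\eta-\pa_t\vec\xi^{\,\delta}\bigr)
\]
for any smooth $\vec\xi^{\,\delta}$ vanishing at $t=0,T$. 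If you invoke Nersisyan's controllability theorem as a black box, Stage~1 is salvaged; but Stage~3 still needs this $\xi$-trick (applied to $\xi_1$), not the flawed matching argument you describe.

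Separately, your overall cascade plan differs from the paper's proof, which handles all five components simultaneously: it takes $(\vec\rho,\vec u)$ to be linear-in-$t$ interpolations, solves in one pass for $(\xi_0,\xi_1)$ from the $\rho_0$-, $A$-, and $\rho_1$-equations of the extended system \eqref{S-eq-full-closed}, reads off $(\eta_0,\eta_1)$ from the two momentum equations, applies the identity above, and finally projects $\vec\eta-\pa_t\vec\xi^{\,\delta}$ onto $\mathbf E_N$. Your modular route would work once the $\xi$-trick is correctly implemented in each stage, and it has the virtue of isolating where Nersisyan's compressible-Euler input is used; the paper's unified construction is shorter and avoids invoking the $(\rho_0,u_0)$ controllability as a separate external result.
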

We remark here that in this theorem we are dealing with the case that  initial and final values belong to $X^{k+1}(T)$ to get approximation in $Z^k$ sense.  Eventually, in order to prove Theorem \ref{thm-con-lim-main} it suffices to let the initial and final states in the more regular space $X^{k+2}(T)$.

In what follows,
we always denote this solution by
\begin{equation}\label{S4eq2}
(\vec{u}^{(N)}, \vec{\r}^{(N)}, A^{(N)})\eqdefa \cR \bigl(\vec{v}, \vec{g}, A_0, 0, 0, \vec{\eta}^{(N)}\bigr).  \notag
\end{equation}

\begin{proof} Let us define
\beq \label{S2eq4}
\vec{\r}= \begin{pmatrix}
\r_0\\
\r_1
\end{pmatrix}\eqdefa  T^{-1}\left(t\wh{\vec{g}}+(T-t)\vec{g}\right)
\andf \vec{u}=\begin{pmatrix}
u_0\\
u_1
\end{pmatrix}\eqdefa T^{-1}\left(t\wh{\bf v}+(T-t){\bf v}\right).
\eeq
Then due to \eqref{S2eq4}, we have $\int_{\TT}\pa_t\r_0\,dx=0$ so that we can solve $\xi_0\in C^\infty([0,T]; H^{k+1}_0(\TT))$ from the equation
\beq \label{S2eq4a}
\pa_x\left(\r_0\xi_0\right)=-\pa_t\r_0-\pa_x\left(\r_0 u_0\right).
\eeq
Indeed, since $1/\r_0$ is a solution of $\pa_x (\r_0 f)=0$, we are able to find a  solution $\xi_0\in H_0^{k+1}(\mathbb{T})$ of \eqref{S2eq4a}. With such $\xi_0,$ we can solve for $A\in C^\infty([0,T]; H^{k}(\TT))$ via
\begin{equation*}
\left\{\begin{array}{l}
\displaystyle \pa_tA+\bigl(u_0+\xi_0\bigr)\pa_xA+2\pa_x\bigl(u_0+\xi_0\bigr)A=0 \quad\mbox{in}\ \ [0,T]\times\TT,\\
\displaystyle  A(0)=A_0.
\end{array}\right.
\end{equation*}
Let us remark here that the function $\hat{A}(T)$ stated in the theorem is exactly the solution $A(T)$.

 Again due to \eqref{S2eq4}, we have $\int_{\TT}\pa_t\r_1\,dx=0$ so that we can solve $\xi_1\in  C^\infty([0,T]; H^{k-1}_0(\TT))$ from the equation
\beno
\pa_x(\xi_1\r_0)=\pa_xA-\left(\pa_t\r_1+\pa_x\left(\bigl(u_0+\xi_0\bigr)\r_1+u_1\r_0\right)\right).
\eeno

We now define
\beq \label{s2eq5}
\begin{split}
\eta_0\eqdefa& \pa_t u_0+\bigl(u_0+\xi_0)\pa_x\bigl(u_0+\xi_0)+\pa_x\r_0\in C^{\infty}([0,T]; H^k_0),\\
\eta_1\eqdefa&\pa_tu_1+\bigl(u_0+\xi_0)\pa_x\left(u_1+\xi_1\right)+\left(u_1+\xi_1\right)\pa_x\bigl(u_0+\xi_0)+\pa_x\r_1\in C^{\infty}([0,T]; H^{k-2}_0).  \notag
\end{split}
\eeq
Let us  take $\vec{\xi}^\d=\begin{pmatrix}
\xi_0^\d\\
\xi_1^\d
\end{pmatrix}\in C^\infty([0,T]; H^{k+1}_0\times H^{k-1}_0)$ so that $\vec{\xi}^\d(0,x)=\vec{\xi}^\d(T,x)=0$ and
\beno
\lim_{\d\to 0}\|\vec{\xi}^\d-\vec{\xi}\|_{L^2_T(H^{k+1}\times H^{k-1})}=0.
\eeno
It is easy to observe that
 \beq\label{delta-xi}
\cR_T\bigl(\vec{v}, \vec{g}, A_0, \vec{\xi}^\d,{\bf \xi}^\d, {\bf \eta}\bigr)=\cR_T\bigl(\vec{v}, \vec{g}, A_0, 0, 0, \vec{\eta}-
\pa_t\vec{\xi}^\d\bigr).  \notag
\eeq
Then in view of Theorem \ref{th3.5}, we deduce that
\beno\label{S2eq6}
\begin{split}
&\lim_{\d\to 0}\bigl\|(\wh{\vec{v}}, \wh{\vec{g}}, {A}(T))-\cR_T\bigl(\vec{v}, \vec{g},A_0,  0,0, \vec{\eta}-\pa_t\vec{\xi}^\d\bigr)\bigr\|_{Z^k}\\
&=\lim_{\d\to 0}\bigl\|\cR_T\bigl(\vec{v}, \vec{g}, A_0, \vec{\xi}, \vec{\xi}, \vec{\eta}\bigr)-\cR_T\bigl(\vec{v}, \vec{g}, A_0, \vec{\xi}^\d,{\bf \xi}^\d, {\bf \eta}\bigr)\bigr\|_{Z^k}=0.
\end{split}
\eeno
On the other hand,  due to $\mathbf{E}_\infty$  is dense in $H^{k}_0\times H^{k-2}_0,$ we have
\beno
\lim_{N\rightarrow +\infty} \bigl\|\PP_{\mathbf{E}_N}\bigl(\vec{\eta}-\pa_t\vec{\xi}^\d\bigr)-\bigl(\vec{\eta}-\pa_t\vec{\xi}^\d\bigr)\bigr\|_{L^2_T(H^k  \times H^{k-2})}=0,
\eeno
which together with Theorem \ref{th3.5} ensures \eqref{S4eq1} with $\vec{\eta}^{(N)}\eqdefa \PP_{\mathbf{E}_N}\bigl(\vec{\eta}-\pa_t\vec{\xi}^\d\bigr).$
This completes the proof of the theorem.
\end{proof}

\subsection{Reduction of dimension of control space}
\label{sec:reduc-dim}
The aim of this section is to look for  $\vec \eta^{(N-1)}\in \mathbf{E}_{N-1}$ so that
\begin{equation}\label{S4eq8}
\bigl\|(\wh{\vec{v}}, \wh{\vec{g}}, \hat{A}(T))-\cR_T\bigl(\vec{v}, \vec{g}, A_0, 0, 0, \vec{\eta}^{(N-1)}\bigr)\bigr\|_{Z^k}\leq \varepsilon.  \notag
\end{equation}

Let us start by stating the following  lemma which is also called  ``convexification" by Agrachev--Sarychev  \cite[Section 5.3]{Agrachev-Sarychev-2005-JMFM}.  The main novelty for this lemma is on the treatment of $(\zeta_0, \zeta_1)$ involving both nonlinear terms $\xi^i_0 \pa_x \xi^i_0$ and   bilinear terms $\pa_x (\xi^i_0 \xi^i_1)$.
\begin{lemma}\label{lem-fun}
{\sl Let $n\in \mathbb{N}$. For any $\vec{\zeta}= \begin{pmatrix}
\zeta_0\\
\zeta_1
\end{pmatrix} \in \mathbf{E}_{n+1}$, there exists $\vec{\eta}, \vec{\xi}^1,..., \vec{\xi}^m\in \mathbf{E}_n$ such that
\beq\label{S4eq11}
\begin{split}
{\zeta}_0=\eta_0- \sum_{i=1}^{m} \xi^i_0 \pa_x \xi^i_0 \andf
{\zeta}_1=\eta_1- \sum_{i=1}^{m}  \pa_x (\xi^i_0 \xi^i_1).
\end{split} \eeq}
\end{lemma}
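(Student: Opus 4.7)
The strategy is to decouple the two equations in \eqref{S4eq11} by applying Lemma~\ref{lem-E} directly to $\zeta_0$, and then augmenting the resulting decomposition with an extra pair of ``dummy slots'' chosen so as to leave the first equation undisturbed while providing just enough flexibility to match the high-frequency content of $\zeta_1$. First, I apply Lemma~\ref{lem-E} to $\zeta_0 \in E_{n+1}$ to obtain $\phi_0, \varphi^1, \ldots, \varphi^p \in E_n$ with $\zeta_0 = \phi_0 - \sum_{i=1}^p \varphi^i \pa_x \varphi^i$. Setting $\xi^i_0 := \varphi^i$, $\xi^i_1 := 0$ for $i = 1, \ldots, p$ then resolves the first equation of \eqref{S4eq11} (with $\eta_0 := \phi_0$) and contributes nothing to the second.

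The key observation for the $\zeta_1$ equation is the cancellation
$$\sin x \cdot \pa_x \sin x + \cos x \cdot \pa_x \cos x = \tfrac{1}{2} \pa_x(\sin^2 x + \cos^2 x) = 0,$$
which means that the additional slots $\xi^{p+1}_0 := \sin x$, $\xi^{p+2}_0 := \cos x$ in $E_n$ do not disturb the first equation regardless of the accompanying $\xi^{p+1}_1, \xi^{p+2}_1 \in E_n$. Writing $\zeta_1 = \zeta_1^{\mathrm{low}} + \alpha \sin((n+2)x) + \beta \cos((n+2)x)$ with $\zeta_1^{\mathrm{low}} \in E_n$, I take the ansatz $\xi^{p+1}_1 := a\sin((n+1)x) + b\cos((n+1)x)$ and $\xi^{p+2}_1 := c\sin((n+1)x) + d\cos((n+1)x)$, both of which lie in $E_n$. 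Elementary product-to-sum identities show that $\pa_x\bigl(\xi^{p+1}_0 \xi^{p+1}_1 + \xi^{p+2}_0 \xi^{p+2}_1\bigr)$ lives in $\mathrm{span}\{\sin(nx), \cos(nx), \sin((n+2)x), \cos((n+2)x)\}$, with frequency-$(n+2)$ coefficients proportional to $a - d$ and $b + c$ respectively. The choices $a - d = -2\alpha/(n+2)$ and $b + c = -2\beta/(n+2)$ then cancel the top-frequency part of $\zeta_1$, and the remaining frequency-$n$ residue lies in $E_n$; setting $\eta_1 := \zeta_1 + \sum_{i=1}^{p+2} \pa_x(\xi^i_0 \xi^i_1)$ thus delivers an element of $E_n$, completing \eqref{S4eq11} with $m = p + 2$.

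The main conceptual point, rather than a serious obstacle, is that the $\xi^i_0$'s must simultaneously serve both equations in \eqref{S4eq11}; the dummy-pair trick above supplies the extra degrees of freedom needed and reduces the matching of the top-frequency content of $\zeta_1$ to an elementary $2 \times 2$ linear problem on the two Fourier modes $\sin((n+2)x), \cos((n+2)x)$. The boundary case $n = 0$ is cleaner rather than harder, since the frequency-$n$ residue then degenerates to a constant that is annihilated by $\pa_x$.
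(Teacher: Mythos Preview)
Your proof is correct, and it takes a genuinely different route from the paper's. The paper first reduces by linearity to the two cases $\vec\zeta=(\zeta_0,0)$ and $\vec\zeta=(0,\zeta_1)$ with $\zeta_1$ a single top-frequency mode; for the latter it applies Lemma~\ref{lem-E} twice, to $+\zeta_1$ and to $-\zeta_1$, and then combines the two decompositions using $\varphi\,\pa_x\varphi=\pa_x\bigl(\varphi\cdot\tfrac12\varphi\bigr)$: summing the two first-equation identities produces $0$ on the left with all $\xi^i_0$'s coming from the two applications of Lemma~\ref{lem-E}, while only the ``$+$'' block contributes nontrivial $\xi^i_1$'s in the second equation. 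Your argument instead handles $\zeta_0$ once via Lemma~\ref{lem-E}, and then exploits the specific cancellation $\sin x\,\pa_x\sin x+\cos x\,\pa_x\cos x=0$ to adjoin two free slots that leave the first equation untouched; a short explicit Fourier computation at frequency $n+1$ in those slots then kills the top-frequency part of $\zeta_1$. Your approach is more elementary and typically yields a smaller $m$ (namely $p+2$ rather than the paper's $2p$ per top-frequency mode), but it is tied to the particular structure of $E_0=\mathrm{span}\{\sin x,\cos x\}$; the paper's ``$\pm$'' trick uses Lemma~\ref{lem-E} as a black box and would transfer more readily to other saturating families.
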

\begin{proof}  Since we can extend the length ($m$) of the sequence,
we only need to prove the cases when $\bar{\eta}=(\bar{\eta}_0, 0)$ or $\bar{\eta}=(0,\bar{\eta}_1)$,  Furthermore, we can assume that $\bar{\eta}_0, \bar{\eta}_1= \pm \sin (n+2)x$ or $\pm \cos (n+2)x$.

The case when $\bar{\eta}=(\bar{\eta}_0, 0)$ is a direct consequence of Lemma \ref{lem-E}, We can take $\eta_1, \xi_1^i=0$.

 Let us turn to the  case when $\bar{\eta}=(0, \bar{\eta}_1)$ with $\bar{\eta}_1= \pm \sin (n+2)x$ or $\pm \cos (n+2)x$.
  Indeed it follows from  Lemma \ref{lem-E} that there  exist of $\eta_{1+}, \eta_{1-}, \varphi_{1+}^i, \varphi_{1-}^i \in E_n$ such that
\begin{gather*}
+{\zeta}_1= \eta_{1+}- \sum_{i=1}^p \varphi_{1+}^i \pa_x \varphi_{1+}^i,\\
-{\zeta}_1= \eta_{1-}- \sum_{i=1}^p \varphi_{1-}^i \pa_x \varphi_{1-}^i,
\end{gather*}
which implies
\begin{align*}
0&= (\eta_{1+}+ \eta_{1-})- \sum_{i=1}^p \varphi_{1+}^i \pa_x \varphi_{1+}^i- \sum_{i=1}^p \varphi_{1-}^i \pa_x \varphi_{1-}^i,\\
{\zeta}_1&= \eta_{1+}- \sum_{i=1}^p \pa_x \big( \varphi_{1+}^i \cdot \frac{1}{2}\varphi_{1+}^i \big)- \sum_{i=1}^p \pa_x \big( \varphi_{1-}^i \cdot 0 \big).
\end{align*}
Then \eqref{S4eq11} is proved by taking
\begin{align*}
m\eqdefa &2p,\\
\eta_0\eqdefa & \eta_{1+}+ \eta_{1-},\\
\eta_1 \eqdefa & \eta_{1+},\\
\xi_0^i\eqdefa & \varphi_{1+}^i, \textrm{ when } i\in \{1,..., p\},\\
\xi_0^i\eqdefa & \varphi_{1-}^{i-k}, \textrm{ when } i\in \{p+1,..., 2p\},\\
\xi_1^i\eqdefa & \frac{1}{2} \varphi_{1+}^i, \textrm{ when } i\in \{1,..., p\},\\
\xi_1^i \eqdefa & 0, \textrm{ when } i\in \{p+1,..., 2p\}.
\end{align*}
This completes the proof of Lemma \ref{lem-fun}.
\end{proof}

With the above key lemma, we are going to prove the following proposition. In fact this proposition together with Theorem \ref{thm-cons-EN} immediately lead to Theorem \ref{thm-con-lim-main}.
\begin{prop}\label{lem-E1-E}
{\sl Let $T>0$, $m\in \mathbb{N}$, and $(\vec{v}, \vec{g}, A_0)$ belong to $ H^{k+2}_0\times H^{k}_0\times H^{k+2}\times H^{k}\times H^{k+1}$
 and $\vec{\eta}_1\in C^{\infty}([0,T];\mathbf{E}_{m+1}).$ For any $\e>0,$  there exists $\vec{\eta}\in C^{\infty}([0,T];\mathbf{E}_m)$
  such that,
\beq\label{S4qe12}
\lVert \mathcal{R}_{T}\bigl(\vec{v}, \vec{g}, A_0, 0, 0, \vec{\eta}\bigr)    - \mathcal{R}_{T}\bigl(\vec{v}, \vec{g}, A_0, 0, 0, \vec{\eta}_1\bigr) \lVert_{Z^k}\leq \varepsilon.
\eeq
}
\end{prop}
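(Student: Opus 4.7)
The proof follows the Agrachev--Sarychev ``convexification and extension'' paradigm, adapted here to the cascade system $(\rho_0, u_0) \to A \to (\rho_1, u_1)$: the goal is to realize at time $T$ the effect of the higher-frequency control $\vec\eta_1 \in \mathbf{E}_{m+1}$ by a control in $\mathbf{E}_m$ combined with fast time oscillations of auxiliary vector fields in $\mathbf{E}_m$.

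First, I would apply Lemma \ref{lem-fun} pointwise in $t$ (the decomposition is linear in $\vec\eta_1(t)$, so smoothness in time is preserved) to get
\beno
\vec\eta_1(t) = \bar{\vec\eta}(t) - \sum_{i=1}^p \begin{pmatrix} \xi_0^i(t) \pa_x \xi_0^i(t) \\ \pa_x\bigl(\xi_0^i(t)\, \xi_1^i(t)\bigr) \end{pmatrix},
\eeno
with $\bar{\vec\eta}, \vec\xi^i \in C^\infty([0, T]; \mathbf{E}_m)$. I then choose smooth periodic scalar functions $\chi_1, \dots, \chi_p : \R \to \R$ with mean $0$, mean square $1$, and pairwise orthogonal in $L^2$ over a period, and set
\beno
\vec\mu_n(t, x) = \sum_{i=1}^p \chi_i(nt)\, \vec\xi^i(t, x) \in C^\infty([0, T]; \mathbf{E}_m).
\eeno
The family $(\vec\mu_n)_n$ is uniformly bounded in every $L^2_T(H^s)$ and, by a Riemann--Lebesgue-type argument, satisfies the oscillation hypothesis of Theorem \ref{thm-con}.

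The heart of the argument is the \emph{convexification limit}
\beno
\cR\bigl(\vec v, \vec g, A_0, \vec\mu_n, 0, \bar{\vec\eta}\bigr) \longrightarrow \cR\bigl(\vec v, \vec g, A_0, 0, 0, \vec\eta_1\bigr) \quad \text{in } Y^k(T).
\eeno
Expanding $(u_0 + \mu_{n,0}) \pa_x (u_0 + \mu_{n,0})$ in the $u_0$-equation of \eqref{S-eq-full-closed}, the diagonal quadratic piece $\mu_{n,0} \pa_x \mu_{n,0} = \sum_{i, j} \chi_i(nt) \chi_j(nt)\, \xi_0^i \pa_x \xi_0^j$ time-averages to exactly $\sum_i \xi_0^i \pa_x \xi_0^i$ by the orthogonality of the $\chi_i$'s, while the cross terms $u_0 \pa_x \mu_{n,0} + \mu_{n,0} \pa_x u_0$ vanish in the integrated sense against equicontinuous test functions. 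An analogous expansion of the $u_1$-equation produces the averaged bilinear forcing $\sum_i \pa_x(\xi_0^i \xi_1^i)$. Combined with the decomposition of $\vec\eta_1$ above, these averaged forcings promote the effective control from $\bar{\vec\eta}$ to $\vec\eta_1$ in the limit. Rigorously, this is established by an $H^k$ energy estimate on the difference of the two solutions in the spirit of the proof of Theorem \ref{th-transport} (see \eqref{eq-321}--\eqref{eq-313}), treating each block of the cascade in turn.

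Given this convexification limit, Theorem \ref{thm-con} applied with $\vec\xi^n = \vec\zeta^n = \vec\mu_n$ yields $\cR(\vec v, \vec g, A_0, \vec\mu_n, \vec\mu_n, \bar{\vec\eta}) - \cR(\vec v, \vec g, A_0, \vec\mu_n, 0, \bar{\vec\eta}) \to 0$ in $Y^k(T)$. I then cut off $\vec\mu_n$ near $t = 0$ and $t = T$ to obtain $\vec\mu_n^\delta \in C^\infty([0, T]; \mathbf{E}_m)$ with $\vec\mu_n^\delta(0) = \vec\mu_n^\delta(T) = 0$, the smoothing error being controlled by Theorem \ref{th3.5}(ii). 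Finally, the change of variable $\tilde{\vec u} = \vec u + \vec\mu_n^\delta$ in the system \eqref{S-eq-full-closed} with $\vec\xi = \vec\zeta = \vec\mu_n^\delta$ transforms it exactly into the same system with $\vec\xi = \vec\zeta = 0$ and control $\bar{\vec\eta} + \pa_t \vec\mu_n^\delta \in C^\infty([0, T]; \mathbf{E}_m)$; the initial and final states are preserved thanks to the vanishing of $\vec\mu_n^\delta$ at the endpoints. Setting $\vec\eta = \bar{\vec\eta} + \pa_t \vec\mu_n^\delta$ with $n$ large and $\delta$ small delivers \eqref{S4qe12}. The main obstacle is the rigorous justification of the convexification step on the full cascade: oscillations of $\vec\mu_n$ propagate through the $A$-transport and the $(\rho_1, u_1)$-subsystem, and one must simultaneously generate the quadratic forcing $\sum_i \xi_0^i \pa_x \xi_0^i$ in the $u_0$-equation and the bilinear forcing $\sum_i \pa_x(\xi_0^i \xi_1^i)$ in the $u_1$-equation, while verifying that every remaining oscillating transport-type contribution in the $\rho$'s and in $A$ vanishes in the limit. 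Lemma \ref{lem-fun}, which packages the quadratic and the bilinear pieces into a single $\mathbf{E}_m$-decomposition, is the coupled-system analogue of Lemma \ref{lem-E} and is precisely what makes this scheme go through.
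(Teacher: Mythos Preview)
Your overall scheme---apply Lemma~\ref{lem-fun}, build oscillating $\vec\mu_n$, establish the convexification limit, invoke Theorem~\ref{thm-con}, smooth at the endpoints, and read off $\vec\eta = \bar{\vec\eta} + \partial_t\vec\mu_n^\delta$---matches the paper's, and the final extension step is identical. There is, however, a genuine error at the start and a linked gap in the middle.

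The error: the decomposition of Lemma~\ref{lem-fun} is \emph{not} linear in $\vec\eta_1$. The map $\psi \mapsto (\varphi^i)$ in $\psi = \phi - \sum_i \varphi^i\partial_x\varphi^i$ scales like a square root in the coefficients of $\psi$ along the extra basis directions of $E_{m+1}$, so applying it pointwise to a smooth $\vec\eta_1(t)$ does not produce smooth (or even continuous) $\vec\xi^i(t)$ when a coefficient changes sign. The paper avoids this by first approximating $\vec\eta_1$ by simple (piecewise constant in time) functions via the continuity in Theorem~\ref{th3.5}(iii), and then, by iteration on the time subintervals, reducing to the case of a \emph{time-independent} $\vec\eta_1\in\mathbf{E}_{m+1}$ before ever invoking Lemma~\ref{lem-fun}. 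This reduction is short but essential.

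The gap: your convexification limit $\mathcal R(\vec v,\vec g,A_0,\vec\mu_n,0,\bar{\vec\eta})\to\mathcal R(\vec v,\vec g,A_0,0,0,\vec\eta_1)$ is only sketched, and the direct energy estimate you invoke would have to treat the full quasilinear cascade with oscillating coefficients at once. The paper proceeds differently and more cleanly: once $\vec\eta_1$ is constant and the $\vec\xi^j$ are fixed, it rewrites the target system as~\eqref{S2eq16} with an explicit oscillating remainder $\vec f_n$, proves $\mathcal K\vec f_n\to 0$ uniformly in $C([0,T];H^{k+1}_0\times H^{k-1}_0)$ (Lemma~\ref{lemmKtend0}), and then \emph{absorbs} $\mathcal K\vec f_n$ into the unknown via $w_{i,n}=u_i-\mathcal K f_{i,n}$ (see~\eqref{w-def}--\eqref{S2eq17}); the convexification limit then follows directly from the Lipschitz bound of Theorem~\ref{th3.5}(ii), with no separate energy argument on the cascade. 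Your use of smooth, mutually $L^2$-orthogonal $\chi_i$ with mean zero and mean-square one, in place of the paper's piecewise-constant $\mu_i$ built from the doubled list $\xi^{j+m}=-\xi^j$, is a legitimate variant---both devices kill the cross terms $u\,\partial_x\mu+\mu\,\partial_x u$ and the off-diagonal products, and both reproduce the correct averaged quadratic and bilinear forcings---but it does not by itself supply the missing rigorous limit.
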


The rest part of this section is devoted to the proof of Proposition \ref{lem-E1-E}.
Since $\vec{\eta}_1\in C^{\infty}([0, T]; \mathbf{E}_1)$ can be approximated by simple functions (piecewise constant with respect to time),
\begin{equation*}
\vec{\eta}_1^m\rightarrow \vec{\eta}_1 \textrm{ in } L_T^\infty(\mathbf{E}_1) \text{ as $m\rightarrow+\infty$,}
\end{equation*}
it suffices to consider simple functions. Furthermore, thanks to a simple iteration argument  and  the continuity of $\mathcal{R}$ (Theorem \ref{th3.5} (iii)), it suffices to consider the case when $\vec{\eta}_1$ is independent of the time.

 From now on we assume that $\vec{\eta}_1(t)\equiv \vec{\eta}_1\in \mathbf{E}_1$.   Let us denote
\begin{equation}\label{def-uroA}
(\vec{u}, \vec{\r}, A)\eqdefa \mathcal{R}\bigl(\vec{v}, \vec{g}, A_0, 0, 0, \vec{\eta}_1\bigr)\in Y^{k+2}(T).
\end{equation}

\subsubsection{\textbf{Well-chosen $\xi^{j}_i$}}\label{subsec-431}
We want to use Lemma \ref{lem-fun} in order to reduce the control of  $\vec{\eta}_1\in \mathbf{E}_1$ by the control of  $\vec{\eta}_0\in \mathbf{E}$.

\noindent $\bullet$ \textbf{ Use of Lemma \ref{lem-fun}}\\
Thanks to Lemma \ref{lem-fun}, there exists $\bar{\vec{\eta}}, \vec{\xi}^{j} \in \mathbf{E}, j\in \{1,..., m\}$ such that
\begin{gather}
(\eta_1)_0=\bar{\eta}_0- \frac{1}{m}\sum_{j=1}^{m} \xi^{j}_0 \pa_x \xi^{j}_0,\label{eta-1-0-bar}\\
(\eta_1)_1=\bar{\eta}_1- \frac{1}{m}\sum_{j=1}^{m}  \pa_x (\xi^{j}_0 \xi^{j}_1).
\end{gather}
However, equation \eqref{eta-1-0-bar}
{\bf does not} mean that
\begin{equation}
u_0\pa_xu_0-(\eta_1)_0= \frac{1}{m}\sum_{j=1}^{m} (u_0+\xi^{j}_0) \pa_x(u_0+ \xi^{j}_0)-\bar{\eta}_0,  \notag
\end{equation}
because of linear terms $\xi^{j}_0 \pa_x u_0$ and $u_0 \pa_x \xi^{j}_0$.

\noindent $\bullet$ \textbf{Adding the adjoint terms}\\
The way to fix such a linear error is to add the opposite quadratic terms, which is  motivated by a simple formula
\begin{equation}
(a+b)^2+(a-b)^2= 2(a^2+b^2).  \notag
\end{equation}
More precisely,  let
\begin{equation}\label{def-xijlam}
 \la\eqdefa \frac1{2m} \textrm{ and }  \xi^{j+m}\eqdefa -\xi^{j} \;  \textrm{ for}  \; j=1, 2, \cdots, m.
\end{equation}
Then it is easy to observe that
\beq\label{S2eq11}
\begin{split}
&u_0 \pa_xu_0-(\eta_1)_0=\sum_{j=1}^{2m}\la \bigl(u_0+\xi^{j}_0\bigr) \pa_x\bigl(u_0+\xi^{j}_0\bigr)-\bar{\eta}_0,\\
&\pa_x(u_0u_1)-(\eta_1)_1=\sum_{j=1}^{2m}\la \pa_x\bigl((u_0+\xi^{j}_0)(u_1+\xi^{j}_1)\bigr)-\bar{\eta}^\ell_1.
\end{split}
\eeq
Hence,  the following two systems  in $ [0,T]\times\TT$   are equivalent:
\begin{equation*}
\left\{\begin{array}{l}
\displaystyle  \pa_t\r_0+\pa_x(u_0\r_0)=0,\\
\displaystyle \pa_t u_0+u_0\pa_xu_0+\pa_x\r_0= (\eta_1)_0,\\
\displaystyle \pa_tA+u_0\pa_xA+2\pa_xu_0A=0,\\
\displaystyle \pa_t\r_1+\pa_x\left(u_0\r_1+u_1\r_0\right)=\pa_xA,
 \\
 \displaystyle  \pa_tu_1+\pa_x\left(u_0u_1\right)+\pa_x\r_1=(\eta_1)_1,
 \\
  \displaystyle {\vec \r}(0)={\vec g},\quad A(0)=A_0\andf {\vec u}(0)={\vec v},
\end{array}\right.
\end{equation*}
and
\begin{equation}\label{S2eq12}
\left\{\begin{array}{l}
\displaystyle  \pa_t\r_0+\pa_x(u_0\r_0)=0,\\
\displaystyle \pa_t u_0+\sum_{j=1}^{2n}
\la \bigl(u_0+\xi^{j}_0\bigr) \pa_x\bigl(u_0+\xi^{j}_0\bigr)+\pa_x\r_0= \bar{\eta}_0,\\
\displaystyle \pa_tA+u_0\pa_xA+2\pa_xu_0A=0,\\
\displaystyle \pa_t\r_1+\pa_x\left(u_0\r_1+u_1\r_0\right)=\pa_xA,
 \\
 \displaystyle  \pa_t u_1+\sum_{j=1}^{2n}
\la \pa_x\left((u_0+\xi^{j}_0)(u_1+\xi^{j}_1)\right)+\pa_x\r_1= \bar{\eta}_1,
 \\
  \displaystyle {\vec \r}(0)={\vec g},\quad A(0)=A_0\andf {\vec u}(0)={\vec v}.
\end{array}\right.
\end{equation}
The preceding equivalent

\subsubsection{\textbf{From stationary sequence $\{\xi^j\}_{j=1}^{2m}$ to curves $\{\mu_n(t)\}_n$}}
Let us define a periodic function $\mu_i(t)$ with period $1$ as
\beq \label{S2eq15}
\begin{split}
\mu_i(t)\eqdefa&\xi_i^{j}\quad \mbox{if}\ \ t\in \Bigl[ \frac{j-1}{2m}, \frac{j}{2m} \Bigr)
\\
 \mu_i(t)\eqdefa&-\mu_i\left(t-\frac12\right)\quad \mbox{if}\ \ t\in (1/2,1]
 \end{split}
\eeq
for $i=0,1,$ and for $j\in \{1,..., m\}$.  Notice that
\begin{gather}
\sum_{j=1}^{2m}  \xi_i^{j}=0 \; \textrm{ for } i=0,1, \label{sum=0}\\
\int_0^1  \mu_i(t) dt=0 \; \textrm{ for } i=0,1.
\end{gather}

For any $n\in \mathbb{N}^*$, we also define a periodic function with period $T/n$ as
\begin{equation}
\mu_{i,n}(t)\eqdefa \mu_i(\frac{nt}T)\in L^{\infty}_T(E) \; \textrm{ for }  i=0,1,
\end{equation}
which is uniformly bounded in $ L^{\infty}_T(E)$.

Then we can rewrite Equation \eqref{S2eq12}  in $ [0,T]\times\TT$ as
\begin{equation}\label{S2eq16}
\left\{\begin{array}{l}
\displaystyle  \pa_t\r_0+\pa_x(u_0\r_0)=0,\\
\displaystyle \pa_t u_0+\left(u_0+\mu_{0,n}\right)\pa_x\left(u_0+\mu_{0,n}\right)
+\pa_x\r_0=\bar{\eta}_0+f_{0,n},\\
\displaystyle \pa_tA+u_0\pa_xA+2\pa_xu_0A=0 \\
\displaystyle \pa_t\r_1+\pa_x\left(u_0\r_1+u_1\r_0\right)=\pa_xA,
 \\
 \displaystyle  \pa_tu_1+\pa_x\left((u_0+\mu_{0,n})(u_1+\mu_{1,n})\right)+\pa_x\r_1=\bar{\eta}_1+f_{1,n},
 \\
  \displaystyle {\vec \r}(0)={\vec g},\quad A(0)=A_0\andf {\vec u}(0)={\vec v},
\end{array}\right.
\end{equation}
where
\beno
\begin{split}
f_{0,n}(t)\eqdefa& \frac12\pa_x\left(u_0+\mu_{0,n}(t)\right)^2-
\frac12\sum_{j=1}^{2m}
\la \pa_x\bigl(u_0+\xi^{j}_0\bigr)^2,\\
f_{1,n}(t)\eqdefa& \pa_x\Big((u_0+\mu_{0,n}(t))(u_1+\mu_{1,n}(t))\Big)-\sum_{j=1}^{2m}
\la \pa_x\left((u_0+\xi^{j}_0)(u_1+\xi^{j}_1)\right).
\end{split}
\eeno
Therefore,
\begin{equation}\label{eq-u-1-1-trans}
(\vec{u}, \vec{\r},A)= \mathcal{R}\bigl(\bv, \bg, A_0, \vec \mu_{n} ,0,  \bar{ \vec \eta}_0+\vec f_{n}\bigr).
\end{equation}
Because $ \vec u_0, \vec \mu_n, \vec \xi^j$ are uniformly bounded in $L^{\infty}_T(H^{k+2}_0\times H^k_0)$, we know that $ \vec f_n$ are uniformly bounded in $L^{\infty}_T(H^{k+1}_0\times H^{k-1}_0)$.

\subsubsection{\textbf{Removing the extra source term $ \vec f_n$}}\label{subsec-433}
We can see from \eqref{S2eq16} and \eqref{eq-u-1-1-trans} that the ``extra" source term  $ \vec f_n$  does not belong to $\mathbf{E}$. In order to remove $ \vec f_n$, a natural idea is to regard it as a  perturbation term, and to use the continuity of the mapping $\mathcal{R}$.  However, we only know the  uniform boundedness of $ \vec f_n$  in $L^{\infty}_T(H^{k+1}_0\times H^{k-1}_0)$, which is not assumed to be small.  In such a case we prove that with the help of a lemma $f_n$ can  be removed. More precisely, we have the following lemma concerning ``relaxation metric" (name according to  \cite[Section 4.1]{Agrachev-Sarychev-2005-JMFM}).

In the next lemma and in the following, for $\psi\in L^1((0,T);\R^l)$, $l\in \mathbb{N}\setminus\{0\}$, we define
$\cK \psi \in W^{1,1}((0,T);\R^l)$ by
\begin{equation}
\label{defcalK}
\cK \psi(t)\eqdefa \int_0^t  \psi (t')\,dt'.
\end{equation}

\begin{lemma}\label{lemmKtend0}
One has
\begin{equation}\label{con-Kf}
\lim_{n\to\infty} \left\|\cK  \vec f_{n}\right\|_{C([0,T]; H^{k+1}_0\times H^{k-1}_0)}=0.
\end{equation}
\end{lemma}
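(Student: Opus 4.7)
The strategy has three steps: first identify, by algebraic expansion and the precise construction of $\{\xi^j\}$ and $\mu_i$, exactly which pieces of $\vec f_n$ oscillate with zero time-average over one period; then estimate the pure oscillation pieces by the elementary observation that $\int_0^t h(ns/T)\,ds = O(T/n)$ uniformly when $h$ is $1$-periodic and of zero mean; and finally reduce the $u$-$\mu$ product pieces to the same situation by approximating $u_0, u_1$ by piecewise-constant-in-time functions.

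\emph{Step 1: Algebraic reduction.} Expand $(u_0+\mu_{0,n})^2$ in $f_{0,n}$ and $(u_0+\mu_{0,n})(u_1+\mu_{1,n})$ in $f_{1,n}$. Using $\sum_{j=1}^{2m}\lambda = 1$ and, from the symmetrization $\xi^{j+m}=-\xi^j$ in \eqref{def-xijlam}, $\sum_{j=1}^{2m}\lambda\xi_i^j = 0$ for $i=0,1$, every $u$-quadratic and every linear-in-$\xi^j$ term cancels, leaving
\begin{align*}
f_{0,n} &= \pa_x\bigl(u_0\mu_{0,n}\bigr) + \tfrac12\pa_x\Bigl(\mu_{0,n}^2 - \textstyle\sum_{j}\lambda(\xi_0^j)^2\Bigr),\\
f_{1,n} &= \pa_x\bigl(u_0\mu_{1,n}+u_1\mu_{0,n}\bigr) + \pa_x\Bigl(\mu_{0,n}\mu_{1,n} - \textstyle\sum_{j}\lambda\xi_0^j\xi_1^j\Bigr).
\end{align*}
The outer $\pa_x$ automatically places $\cK f_{i,n}$ in $H^s_0$, so it suffices to show that the $t$-antiderivatives of the bracketed quantities tend to $0$ uniformly on $[0,T]$, in $H^{k+2}$ for $f_{0,n}$ and in $H^k$ for $f_{1,n}$.

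\emph{Step 2: Oscillation estimates.} By the definition \eqref{S2eq15} of $\mu_i$, the one-period time-average of $\mu_0^2$ (resp.\ $\mu_0\mu_1$) equals precisely $\sum_{j}\lambda(\xi_0^j)^2$ (resp.\ $\sum_{j}\lambda\xi_0^j\xi_1^j$). Consequently $\mu_{0,n}^2 - \sum_j\lambda(\xi_0^j)^2$ and $\mu_{0,n}\mu_{1,n}-\sum_j\lambda\xi_0^j\xi_1^j$ are of the form $G(ns/T,x)$ with $G$ a fixed trigonometric polynomial in $x$ having $1$-periodic, bounded, zero-mean time coefficients, so their $t$-integrals are $(T/n)\int_0^{nt/T}G(r,\cdot)\,dr = O(T/n)$ uniformly in $t$, in every $H^s$. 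For the bilinear $u$-$\mu$ products, decompose $\mu_{i,n}(s,x) = c^n_{i,1}(s)\sin x + c^n_{i,2}(s)\cos x$ with $c^n_{i,\alpha}$ scalar, $T/n$-periodic and of mean zero; the needed claim becomes: for $u\in C([0,T];H^s)$,
\[
\sup_{t\in[0,T]}\Bigl\|\int_0^t c^n(s)\,u(s,\cdot)\,ds\Bigr\|_{H^s}\longrightarrow 0.
\]
Given $\varepsilon>0$, approximate $u$ within $\varepsilon$ in $L^\infty_T H^s$ by a time-step function $u^\varepsilon(s,\cdot) = \sum_i u(t_i,\cdot)\mathbf{1}_{[t_i,t_{i+1})}(s)$: the error integral is $O(\varepsilon T)$, and on the step-function part the integral is a finite sum $\sum_i u(t_i,\cdot)\int_{t_i}^{t\wedge t_{i+1}}c^n(s)\,ds$ whose scalar factors are $O(T/n)$ by the same periodicity argument. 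Sending $n\to\infty$ and then $\varepsilon\to 0$ proves the claim; one applies it with $(u,s)=(u_0,k+2)$ and $(u,s)=(u_1,k)$, which is legitimate since $u_0\in C([0,T];H^{k+2})$ and $u_1\in C([0,T];H^k)$ from $(\vec u,\vec\rho,A)\in Y^{k+2}(T)$ (see \eqref{def-uroA}).

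Combining Steps 1 and 2 delivers $\cK f_{0,n}\to 0$ in $C([0,T];H^{k+1}_0)$ and $\cK f_{1,n}\to 0$ in $C([0,T];H^{k-1}_0)$, which is exactly \eqref{con-Kf}. The only genuinely delicate point is the algebraic reduction of Step 1: the symmetrization $\xi^{j+m}=-\xi^j$ and the value $\lambda=1/(2m)$ are tailor-made so that every residual piece of $f_{0,n}$ and $f_{1,n}$ has vanishing period-mean, after which the trivial oscillation lemma used in Step 2 is sufficient to conclude.
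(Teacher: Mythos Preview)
Your argument is correct and, in substance, close to the paper's, but you organize it differently and in a way that is somewhat more transparent. The paper does not perform your explicit algebraic reduction; instead it treats $f_{0,n}$ as a functional of $u_0$, shows a uniform-in-$n$ Lipschitz bound $\|\cK\tilde f_{0,n}(u)-\cK\tilde f_{0,n}(v)\|\lesssim\|u-v\|$ to reduce to time-step $u_0$, invokes Arzel\`a--Ascoli to reduce to pointwise-in-$t$ convergence, and finally, for constant $u_0$, checks directly that the integral of $\tilde f_{0,n}$ over one period $[0,T/n]$ vanishes. Your route replaces the Lipschitz/Arzel\`a--Ascoli machinery by the explicit splitting into the pure $\mu$-oscillation part (handled by the period-average identity $\int_0^1\mu_0\mu_i\,dt=\sum_j\lambda\xi_0^j\xi_i^j$) and the $u$-$\mu$ cross terms (handled by the elementary oscillation lemma after step-function approximation of $u$). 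Both approaches rest on the same two facts --- the symmetrization $\xi^{j+m}=-\xi^j$ killing the linear-in-$\xi$ terms, and the exact matching of period-averages --- but yours makes this structure visible and avoids the compactness argument.

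One small imprecision: your decomposition $\mu_{i,n}(s,x)=c^n_{i,1}(s)\sin x+c^n_{i,2}(s)\cos x$ is literally valid only in the bottom step $E_1\to E_0$; in the general step $E_{m+1}\to E_m$ the $\xi^j$ lie in $E_m$, so $\mu_{i,n}(s,\cdot)$ is a trigonometric polynomial of degree $\le m+1$ and one needs the corresponding (still finite) basis. This does not affect your argument at all, since the only property you use is that $\mu_{i,n}$ is a finite sum $\sum_\alpha c^n_\alpha(s)\phi_\alpha(x)$ with fixed smooth $\phi_\alpha$ and scalar $T/n$-periodic zero-mean $c^n_\alpha$; just say so rather than committing to two modes.
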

\begin{proof}
We know that $\vec{u} \in H^{k+2}_0\times H^k_0, \vec \mu_n\in L^{\infty}_T(\mathbf{E}), \vec  \xi^k\in \mathbf{E}$.

\noindent$\bullet$ {\bf Step 1.} We only prove the above limit for   $f_{0,n}$, as the same proof holds for $f_{1,n}$.
More generally, we  prove the following lemma.
\begin{lemma}\label{lem-2.5}
Let $ \xi^j_0$ be given. Let $\mu_{0, n}$ be generated by $ \xi^j_0$ following \eqref{S2eq15}. For any $u_0(t)\in L^2_T(H^{k+2}_0)$, we define
\begin{equation*}
\tilde{f}_{0,n}(u_0)\eqdefa \frac12\pa_x\bigl(u_0(t)+\mu_{0,n}(t)\bigr)^2-
\frac12\sum_{j=1}^{2m}
\la \pa_x\bigl(u_0(t)+\xi^{j}_0\bigr)^2.
\end{equation*}
 Then, for any given $u_0(t)\in L^2_T(H^{k+2}_0)$, we have
\begin{equation}\label{ine-k-til-f}
\lim_{n\to+\infty}\left\|\cK \tilde{f}_{0,n}(u_0)\right\|_{C([0,T]; H^{k+1}\times H^{k-1})}=0.
\end{equation}
\end{lemma}
We notice that, by recalling \eqref{def-xijlam}, \eqref{S2eq15} and \eqref{sum=0},
\begin{align*}
&\;\;\;\lVert \mathcal{K}\tilde{f}_{0,n}(u)- \mathcal{K}\tilde{f}_{0,n}(v)\lVert_{C([0, T];H^{k+1})}\\
&\leq \lVert \tilde{f}_{0,n}(u)- \tilde{f}_{0,n}(v)\lVert_{L^{1}_T(H^{k+1})}\\
&\leq \frac{1}{2}\|\bigl(u(t)+\mu_{0,n}(t)\bigr)^2- \bigl(v(t)+\mu_{0,n}(t)\bigr)^2\|_{L^{1}_T(H^{k+2})}  \\
&\;\;\;\;\;\;\;\;\; + \frac{1}{4m}\|\sum_{j=1}^{2m}
\bigl(u(t)+\xi^{j}_0\bigr)^2- \bigl(v(t)+\xi^{j}_0\bigr)^2\|_{L^{1}_T(H^{k+2})}  \\
&= \frac{1}{2}\|\bigl(u(t)+\mu_{0,n}(t)+ v(t)+\mu_{0,n}(t)\bigr) \bigl(u(t)- v(t)\bigr)\|_{L^{1}_T(H^{k+2})}  \\
&\;\;\;\;\;\;\;\;\; + \frac{1}{2}\|
(u(t)+v(t))(u(t)-v(t))\|_{L^{1}_T(H^{k+2})}  \\
&\leq  \bigl(\lVert u\lVert_{L^{2}_T(H^{k+2})}+ \lVert v \lVert_{L^{2}_T(H^{k+2})}+  \lVert \mu_{0, n} \lVert_{L^{2}_T(H^{k+2})}\bigr) \lVert u-v\lVert_{L^{2}_T(H^{k+2})} \\
&\leq  \bigl(\lVert u\lVert_{L^{2}_T(H^{k+2})}+ \lVert v \lVert_{L^{2}_T(H^{k+2})}+  C\bigr) \lVert u-v\lVert_{L^{2}_T(H^{k+2})},
\end{align*}
where $C$ is independent of $n$.
Thus, it suffices to prove \eqref{ine-k-til-f} for  {\bf simple functions} $u_0(t)$.

The reason why we introduce $\tilde{f}_{0, n}$  is that in $f_{0, n}$ the functions  $\xi_0^k$ depend on $u_0(t)$.\\

\noindent$\bullet$ {\bf Step 2.} Since $u_0, \mu_0(t), \xi_0^j$ are chosen from a finite set, we know that $\mathcal{K} \tilde{f}_{0,n} (t)$ is included in a bounded finite dimensional set. Therefore, for all $t$, $\mathcal{K}f_{0, n}(t)$ is relatively compact.
On the other hand, as $\tilde{f}_{0, n}$ is uniformly bounded in $C([0, T];H^{k+1})$, we know that $\{\mathcal{K}\tilde{f}_{0, n}\}$ is equicontinuous on $[0, T]$.
Hence, by Arzel\`a-Ascoli theorem we only need to prove that
\begin{equation}\label{step-ine-2}
\lim_{n\to+\infty} \mathcal{K} \tilde{f}_{0, n}(t)=0, \textrm{ in } H^{k+1}_0,
\end{equation}
for any $t\in [0, T]$.\\

\noindent$\bullet$ {\bf Step 3.} It further suffices to prove \eqref{step-ine-2} for time-independent $u_0$. The general case (when $u_0(t)$ is timely piecewise constant) can be proved by applying the same approach.
Let us therefore suppose that $u_0$ is a constant function in $E$: $u_0(t,x)=\varphi(x)$ for some $\varphi\in E$.  We define the quadratic structure by
\begin{equation*}
B(y)\eqdefa \frac{1}{2} \pa_x y^2.
\end{equation*}
Thus
\begin{equation*}
\tilde{f}_{0,n}(t)= B(u_0+ \mu_{0, n})- \sum_{j=1}^{2m} \lambda B(u_0+ \xi_0^j).
\end{equation*}
The $T/n$-periodicity of $\mu_{0, n}$ tells us that $ (u_0+ \mu_{0, n})$ is periodic with period $T/n$, which to be combined with the fact that $B(u_0+ \xi_0^j)$ is stationary with respect to time,  implies that
$$\tilde{f}_{0, n}(t+ \frac{T}{n})= \tilde{f}_{0, n}(t).$$
Moreover, thanks to the construction of $\mu_{0, n}$, we also have that
\begin{align*}
\int_0^{T/n} \tilde{f}_{0, n} (t) dt&= \int_0^{T/n} \Big(B\bigl(u_0+ \mu_{0, n}(t)\bigr)- \sum_{k=1}^{2m} \lambda B\bigl(u_0+ \xi_0^k\bigr) \Big)dt\\
&= \frac{1}{2}\partial_x\left( \int_0^{T/n}  \bigl(u_0+ \mu_{0, n}(t)\bigr)^2- \lambda \sum_{k=1}^{2m} \bigl(u_0+ \xi_0^k\bigr)^2 dt\right)\\
&= \frac{1}{2}\partial_x\left( \int_0^{T/n}  \bigl(\mu_{0, n}(t)\bigr)^2- \lambda \sum_{k=1}^{2m} \bigl(\xi_0^k\bigr)^2 dt\right)\\
&= \frac{1}{2}\partial_x\left(  \frac{T}{2nm} \sum_{k=1}^{2m} \bigl(\xi_0^k\bigr)^2 -  \lambda \frac{T}{n} \sum_{k=1}^{2m} \bigl(\xi_0^k\bigr)^2 \right)\\
&=0.
\end{align*}

It is known that $\tilde{f}_{0, n}$ is uniformly bounded in $L^{\infty}(H^{k+1})$. Thus, for every $t\in [0,T]$,
\begin{equation*}
\lVert \mathcal{K} \tilde{f}_{0,n}(t)\lVert_{H^{k+1}_0}\leq C \frac{T}{n},
\end{equation*}
which completes the proof of Lemma \ref{lem-2.5}.
\end{proof}\\

Let us define
\begin{equation}\label{w-def}
w_{i,n}\eqdefa u_i-\cK f_{i,n},
\end{equation}
for $i=0,1$. Then \eqref{S2eq16} can be equivalently written as
\begin{equation}\label{S2eq17}
\left\{\begin{array}{l}
\displaystyle  \pa_t\r_0+\pa_x\bigl((w_{0,n}+\cK f_{0,n})\r_0\bigr)=0,\\
\displaystyle \pa_t w_{0,n}+\frac12\pa_x\bigl(w_{0,n}+\mu_{0,n}+\cK f_{0,n}\bigr)^2
+\pa_x\r_0=\bar{\eta}_0,\\
\displaystyle \pa_tA+\bigl(w_{0,n}+\cK f_{0,n}\bigr)\pa_xA+2\pa_x\bigl(w_{0,n}+\cK f_{0,n}\bigr)A=0,\\
\displaystyle \pa_t\r_1+\pa_x\left(\bigl(w_{0,n}+\cK f_{0,n}\bigr)\r_1+\bigl(w_{1,n}+\cK f_{1,n}\bigr)\r_0\right)=\pa_xA,
 \\
 \displaystyle  \pa_tw_{1,n}+\pa_x\bigl((w_{0,n}+\mu_{0,n}+\cK f_{0,n})(w_{1,n}+\mu_{1,n}+\cK f_{1,n})\bigr)+\pa_x\r_1=\bar{\eta}_1,
 \\
  \displaystyle {\vec \r}(0)={\vec g},\quad A(0)=A_0\andf {\vec u}(0)={\vec v},
\end{array}\right.
\end{equation}
which means that
\begin{equation*}
(\vec{w}_n, \vec{\r}, A)=\mathcal{R} \bigl(\vec{v}, \vec{g}, A_0, \vec \mu_{n}+\cK \vec f_{n}, \cK \vec f_{n}, \bar{\vec \eta}\bigr).
\end{equation*}
Equations \eqref{def-uroA}, \eqref{con-Kf} and  \eqref{w-def} tell us that $(\vec{w}_n, \vec{\r}, A)\in Y^{k+1}(T)$.
It further reduces to the following problem:
\begin{equation}\label{S2eq19}
\left\{\begin{array}{l}
\displaystyle  \pa_t\bar{\r}_{0,n}+\pa_x\bigl(\bar{w}_{0,n}\bar{\r}_{0,n}\bigr)=0,\\
\displaystyle \pa_t\bar{w}_{0,n}+\frac12\pa_x\bigl(\bar{w}_{0,n}+\mu_{0,n}\bigr)^2
+\pa_x\bar{\r}_{0,n}=\bar{\eta}_0,\\
\displaystyle \pa_t\bar{A}_n+\bar{w}_{0,n}\pa_x\bar{A}_n+2\pa_x\bar{w}_{0,n}\bar{A}_n=0,\\
\displaystyle \pa_t\bar{\r}_{1,n}+\pa_x\left(\bar{w}_{0,n}\bar{\r}_{1,n}+\bar{w}_{1,n}\bar{\r}_{0,n}\right)=\pa_x\bar{A}_n,
 \\
 \displaystyle  \pa_t\bar{w}_{1,n}+\pa_x\bigl((\bar{w}_{0,n}+\mu_{0,n})(\bar{w}_{1,n}+\mu_{1,n})\bigr)+\pa_x\bar{\r}_{1,n}=\bar{\eta}_1,
 \\
  \displaystyle \bar{{\vec \r}}(0)={\vec g},\quad \bar{A}_n(0)=A_0\andf \bar{{\vec u}}(0)={\vec v},
\end{array}\right.
\end{equation}
which means
\begin{equation}
(\bar{\vec{w}}_n, \bar{\vec{\r}}_n, \bar{A}_n)=\mathcal{R} \bigl(\vec{v}, \vec{g}, A_0, \vec \mu_{n}, 0, \bar{\vec{\eta}}\bigr).           \notag
\end{equation}

We know from Theorem \ref{th3.5},  Lemma \ref{lem-2.5} and  the fact that both  $\lVert (\vec{w}_n, \vec{\r}, A) \lVert_{Y^{k+1}(T)}$ and $\|\bigl(\vec{v}, \vec{g}, A_0, \vec \mu_{n}+\cK \vec f_{n}, \cK \vec f_{n}, \bar{\vec \eta}\bigr)\|_{X^k(T)}$ are uniformly bounded,  that  $\|\mathcal{R} \bigl(\vec{v}, \vec{g}, A_0, \vec \mu_{n}, 0, \bar{\vec{\eta}}\bigr)\|_{Y^k(T)}$ is uniformly bounded for large $n$   and that $\vec \rho_{0, n}(x)$ are uniformly away from 0. Since the norm of $\|\bigl(\vec{v}, \vec{g}, A_0, \vec \mu_{n}, 0, \bar{\vec{\eta}}\bigr)\|_{X^{k+2}(T)}$ is uniformly bounded, thanks to the blow up criteria we know that  $(\bar{\vec{w}}_n, \bar{\vec{\r}}_n, \bar{A}_n)$ are uniformly bounded in $Y^{k+2}(T)$ for large $n$. \\
Because for $n$ large enough  $\lVert (\vec{w}_n, \vec{\r}, A) \lVert_{Y^{k+1}(T)}$,  $\|(\bar{\vec{w}}_n, \bar{\vec{\r}}_n, \bar{A}_n)\|_{Y^{k+1}(T)}, \|\cK  \vec f_{n}\|_{C([0,T]; H^{k+1}_0\times H^{k-1}_0)}$ and  $\|\bigl(\vec{v}, \vec{g}, A_0, \vec \mu_{n}, 0, \bar{\vec{\eta}}\bigr)\|_{X^{k+1}(T)}$ are uniformly bounded, and $\vec \rho_{0, n}$ are uniformly away from 0,  we have  Lipschitz property in lower regularity spaces, namely in $X^k(T)$ and $Y^k(T)$ space,
\begin{equation*}
\lVert (\bar{\vec{w}}_n, \bar{\vec{\r}}_n, \bar{A}_n)- (\vec{w}_n, \vec{\r}, A)
\lVert_{Y^k(T)}\leq C \|\bigl(\vec{v}, \vec{g}, A_0, \vec \mu_{n}+\cK \vec f_{n}, \cK \vec f_{n}, \bar{\vec \eta}\bigr)- \bigl(\vec{v}, \vec{g}, A_0, \vec \mu_{n}, 0, \bar{\vec \eta}\bigr)\|_{X^k(T)},
\end{equation*}
which, together with Lemma \ref{lemmKtend0}, implies that
\begin{equation}\label{ine-w-bar-w}
\lim_{n\rightarrow +\infty} \lVert (\bar{\vec{w}}_n, \bar{\vec{\r}}_n, \bar{A}_n)- (\vec{w}_n, \vec{\r}, A)
\lVert_{Y^k(T)}=0.
\end{equation}
Finally,  \eqref{con-Kf}, \eqref{w-def} and \eqref{ine-w-bar-w} imply that
\begin{equation}
\lim_{n\rightarrow +\infty} \lVert (\bar{\vec{w}}_n, \bar{\vec{\r}}_n, \bar{A}_n)- (\vec{u}, \vec{\r}, A)
\lVert_{Y^k(T)}=0.   \notag
\end{equation}

\subsubsection{\textbf{Continuity property: on the use of Theorem \ref{thm-con}}}\label{subsec-434}
At first we prove the following lemma.
\begin{lemma}
For any $t_0>0$, any uniformly equicontinuous function $\vec \chi_n(t)\in C([0,T]; H^k\times H^{k-2})$, and any uniformly equicontinuous function $\tilde \chi_n(t)\in C([0,T]; H^{k-1})$, we have
\beno
\lim_{n\to +\infty}\bigl\|\int_0^{t_0}\mu_{0, n}(t) \chi_{0, n}(t)\,dt \bigr\|_{H^k}=0,
\eeno
\beno
\lim_{n\to +\infty}\bigl\|\int_0^{t_0} \mu_{0, n}(t)\tilde\chi_{n}(t)\,dt \bigr\|_{H^{k-1}}=0,
\eeno
\beno
\lim_{n\to +\infty}\bigl\|\int_0^{t_0}\mu_{1, n}(t)\chi_{1,n}(t)\,dt \bigr\|_{H^{k-2}}=0.
\eeno
\end{lemma}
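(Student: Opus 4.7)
The approach is a Riemann-Lebesgue / averaging argument exploiting the fundamental property that $\mu_i$ has mean zero over its period of length $1$: using \eqref{S2eq15} together with \eqref{sum=0} and the anti-symmetry $\mu_i(t)=-\mu_i(t-1/2)$ on $(1/2,1]$, we have
\begin{equation*}
\int_0^{T/n}\mu_{i,n}(t)\,dt=\frac{T}{n}\int_0^1\mu_i(s)\,ds=0 \quad \text{for } i=0,1.
\end{equation*}
Since moreover $\mu_{i,n}(t)\in E$ which is a fixed finite-dimensional space of smooth functions on $\TT$, the multiplication by $\mu_{i,n}(t)$ is a bounded operator on every $H^s$, uniformly in $t$ and in $n$; we set $M\eqdefa \sup_n\|\mu_{i,n}\|_{L^\infty_T(E)}<\infty$.

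The plan is to carry out the three estimates in parallel; only the ambient function space changes. Fix one of the three cases, say the first, and write $\chi\eqdefa \chi_{0,n}$, $\mu\eqdefa \mu_{0,n}$, working in $H^k$. Given $\varepsilon>0$, use the uniform equicontinuity of $\{\chi_n\}$ in $C([0,T];H^k)$ to pick $\delta>0$ such that $\|\chi_n(t)-\chi_n(s)\|_{H^k}<\varepsilon$ whenever $|t-s|<\delta$, uniformly in $n$. For $n$ large enough so that $T/n<\delta$, partition $[0,t_0]$ into the subintervals $I_j\eqdefa [jT/n,(j+1)T/n)$ for $j=0,\ldots,J-1$ together with a remainder interval $I_J$ of length less than $T/n$, where $J\leq nt_0/T$.

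On each full period $I_j$, split
\begin{equation*}
\int_{I_j}\mu(t)\chi(t)\,dt=\int_{I_j}\mu(t)\bigl(\chi(t)-\chi(jT/n)\bigr)\,dt+\Bigl(\int_{I_j}\mu(t)\,dt\Bigr)\chi(jT/n).
\end{equation*}
The second piece vanishes thanks to the mean-zero property recalled above, while the first piece has $H^k$-norm bounded by $(T/n)\cdot M\cdot\varepsilon$. Summing over $j=0,\ldots,J-1$ gives a contribution of size $t_0 M\varepsilon$. The leftover interval $I_J$ contributes at most $(T/n)M\|\chi\|_{L^\infty_T(H^k)}$, which tends to $0$ as $n\to\infty$ since $\{\chi_n\}$ is uniformly bounded (equicontinuity on the compact interval $[0,T]$ forces uniform boundedness once we fix any base point). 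Letting $n\to\infty$ then $\varepsilon\to 0$ yields the desired limit.

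The second and third estimates are identical in structure, using instead $\tilde\chi_n\in C([0,T];H^{k-1})$ and $\chi_{1,n}\in C([0,T];H^{k-2})$ together with the fact that multiplication by elements of $E$ is bounded on $H^{k-1}$ and $H^{k-2}$ respectively. The only step that requires attention is verifying that the constant $M$ bounding $\mu_{i,n}$ (as a multiplier) is independent of $n$, but this follows immediately from the finite dimensionality of $E$ and the fact that the coefficients of $\mu_{i,n}(t)$ in a fixed basis of $E$ range over the finite set $\{\xi_i^1,\ldots,\xi_i^{2m}\}$. No serious obstacle is anticipated; the mean-zero property on each period is the whole content, and uniform equicontinuity takes care of the rest.
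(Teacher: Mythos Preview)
Your proposal is correct and follows essentially the same averaging argument as the paper. The only cosmetic difference is that the paper exploits the half-period antisymmetry $\mu_i(t)=-\mu_i(t-1/2)$ to pair up contributions directly (writing $\int_i^{i+1/2}\mu_0(t)\bigl(\chi_{0,n}(tT/n)-\chi_{0,n}(tT/n+T/(2n))\bigr)\,dt$), whereas you freeze $\chi$ at the left endpoint and use the full-period mean-zero property; both routes reduce immediately to the uniform equicontinuity bound and a negligible remainder.
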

\begin{proof}
The proof is straightforward, thanks to the fact that $\mu_n$ behave like an oscillation. Indeed,
\begin{align*}
\int_0^{t_0} \mu_{0,n} \chi_{0, n}(t) dt=&\int_0^{t_0} \mu_0 (\frac{nt}{T})(t) \chi_{0, n}(t) dt\\
=&\frac{T}{n} \int_0^{\frac{n t_0}{T}} \mu_0 (t) \chi_{0, n}(\frac{tT}{n}) dt\\
=& \sum_{i=0}^{[\frac{n t_0}{T}]-1}  \frac{T}{n} \int_i^{i+1} \mu_0 (t) \chi_{0, n}(\frac{tT}{n}) dt+     \frac{T}{n} \int_{[\frac{n t_0}{T}]}^{\frac{n t_0}{T}} \mu_0 (t) \chi_{0, n}(\frac{tT}{n}) dt.
\end{align*}
Since $\vec \chi_n(t)\in C([0,T]; H^k\times H^{k-2})$ is equi-continuous, and $\mu_0(t)$ is in $L^{\infty}_T(\mathbf{E})$ (hence uniformly bounded in $L^{\infty}_T(H^s), \forall s> 0$), we know that
\begin{equation}
\int_{[\frac{n t_0}{T}]}^{\frac{n t_0}{T}} \mu_0 (t) \chi_{0, n}(\frac{tT}{n}) dt    \notag
\end{equation}
is uniformly bounded in $H^k$ for $t_0\in [0, T]$ and for $n$.  On the other hand, we know from the construction of $\mu_0(t)$ that
\begin{align*}
\int_i^{i+1} \mu_0 (t) \chi_{0, n}(\frac{tT}{n}) dt=& \int_i^{i+1/2} \mu_0 (t) \chi_{0, n}(\frac{tT}{n}) dt+  \int_{i+1/2}^{i+1} -\mu_0 (t-\frac{1}{2}) \chi_{0, n}(\frac{tT}{n}) dt\\
=& \int_i^{i+1/2} \mu_0 (t) \Big(\chi_{0, n}(\frac{tT}{n})- \chi_{0, n}(\frac{tT}{n}+ \frac{T}{2n}) \Big)dt.
\end{align*}
Since $\vec \chi_n: [0,T]\rightarrow H^k\times H^{k-2}$ is uniformly equicontinuous, for any $\delta>0$ there exists $M$ such that when $n>M$ we have
\begin{equation}
\lVert \chi_{0, n}(\frac{tT}{n})- \chi_{0, n}(\frac{tT}{n}+ \frac{T}{2n})\lVert_{H^k}<\delta.  \notag
\end{equation}
Hence
\begin{equation}
\lVert \mu_0(t) \Big(\chi_{0, n}(\frac{tT}{n})- \chi_{0, n}(\frac{tT}{n}+ \frac{T}{2n})\Big)\lVert_{H^k}<\delta, \forall 0<t<n.  \notag
\end{equation}
Therefore,
\begin{equation}
 \sum_{i=0}^{[\frac{n t_0}{T}]-1}  \frac{T}{n} \int_i^{i+1} \mu_0 (t) \chi_{0, n}(\frac{tT}{n}) dt< [\frac{n t_0}{T}] \frac{T}{n} \frac{\delta}{2}< T \delta. \notag
\end{equation}
 The same proof  shows that
\beno
\lim_{n\to +\infty}\bigl\|\int_0^{t_0}\mu_{0, n}\tilde \chi_{n}(0, t)\,dt \bigr\|_{H^{k-1}}=0.
\eeno
The same proof also holds for $\mu_1$.
\end{proof}
\\

Following Theorem \ref{thm-con}, we compare   \eqref{S2eq19} to the following equation  in $ [0,T]\times\TT$ ,
\begin{equation}\label{S2eq20}
\left\{\begin{array}{l}
\displaystyle  \pa_t\tilde{\r}_{0,n}+\pa_x\bigl((\tilde{w}_{0,n}+\mu_{0,n})\tilde{\r}_{0,n}\bigr)=0,\\
\displaystyle \pa_t\tilde{w}_{0,n}+\frac12\pa_x\left(\tilde{w}_{0,n}+\mu_{0,n}\right)^2
+\pa_x\tilde{\r}_{0,n}=\bar{\eta}_0,\\
\displaystyle \pa_t\tilde{A}_n+(\tilde{w}_{0,n}+\mu_{0,n})\pa_x\tilde{A}_n+2\pa_x(\tilde{w}_{0,n}+\mu_{0,n})\tilde{A}_n=0\\
\displaystyle \pa_t\tilde{\r}_{1,n}+\pa_x\Big((\tilde{w}_{0,n}+\mu_{0,n})\tilde{\r}_{1,n}+(\tilde{w}_{1,n}+\mu_{1,n})\tilde{\r}_{0,n}\Big)=\pa_x\tilde{A}_n,
 \\
 \displaystyle  \pa_t\tilde{w}_{1,n}+\pa_x\Big((\tilde{w}_{0,n}+\mu_{0,n})(\tilde{w}_{1,n}+\mu_{1,n})\Big)+\pa_x\tilde{\r}_{1,n}
 =\bar{\eta}_1,
 \\
  \displaystyle \tilde{{\vec \r}}_n(0)={\vec g},\quad\tilde{A}_n(0)=A_0\andf \tilde{{\vec u}}_n(0)={\vec v}.
\end{array}\right.
\end{equation}
As $(\bar{\vec{w}}_n, \bar{\vec{\r}}_n, \bar{A}_n)$ are uniformly bounded in $Y^{k+2}(T)$, thanks to Theorem \ref{thm-con} and the blow up criteria,  we know that
\begin{gather}
\lim_{n\rightarrow +\infty} \lVert (\bar{\vec{w}}_n, \bar{\vec{\r}}_n, \bar{A}_n)- (\tilde{\vec{w}}_n, \tilde{\vec{\r}}_n, \tilde{A}_n) \lVert_{Y^k(T)}=0, \notag \\
(\tilde{\vec{w}}_n, \tilde{\vec{\r}}_n, \tilde{A}_n)=\mathcal{R} \bigl(\vec{v}, \vec{g}, A_0, \vec{\mu}_{n}, \vec{\mu}_{n}, \bar{\vec{\eta}}\bigr) \textrm{ is uniformly bounded in } Y^{k+2}(T) \textrm{ for large } n. \notag
\end{gather}
\begin{remark}
We observe that the $L^{\infty}_T(H^s\times H^{s-2})$ norm of $\vec \mu_n$ (with $s>0$) and the source term $\bar{\vec{\eta}}$ only depend on $\vec \eta_1$.   This point probably could  be used to reduce the regularity of the initial state.  However, this is not the main purpose of this paper.
\end{remark}

\subsubsection{\textbf{Approximation of $\mu_n$ by smooth (with respect to time) functions}}\label{subsec-435}

In this step we approximate $\vec \mu_n$ by smooth $\vec \mu_n^m$ such that  their values at the end points are 0.
Let us take $\vec \mu^m_{n} \in C^\infty([0,T]; \mathbf{E})$ with $\vec \mu^m_{n}(0)= \vec\mu^m_{n}(T)=\mathbf{0}$ and
\beno
\lim_{m\to \infty}\|\vec{\mu}^m_{n}-\vec{\mu}_{n}\|_{L^2_T( H^{k+3}_0\times H^{k+1}_0)}=0.
\eeno
Let
\begin{equation}
(\tilde{\vec{w}}_n^m, \tilde{\vec{\r}}^m_n, \tilde{A}^m_n)\eqdefa \mathcal{R} \bigl(\vec{v}, \vec{g}, A_0, \vec{\mu}_{n}^m, \vec{\mu}_{n}^m, \bar{\vec{\eta}}\bigr)\in Y^{k+2}(T).                \notag
\end{equation}
Then, for any $n$,  thanks to the continuity property  of $\mathcal{R}$ stated in Theorem \ref{th3.5}, we have
\begin{equation}
\lim_{ m\rightarrow \infty} \lVert (\tilde{\vec{w}}_n^m, \tilde{\vec{\r}}_n^m, \tilde{A}_n^m)- (\tilde{\vec{w}}_n, \tilde{\vec{\r}}_n, \tilde{A}_n)\lVert_{Y^{k+2}(T)}=0.     \notag
\end{equation}
Consequently, we are able to select a sequence $\{m(n)\}_{\mathbb{N}}$ such that
\begin{equation}
\lim_{ n\rightarrow +\infty} \lVert (\tilde{\vec{w}}_n^{m(n)}, \tilde{\vec{\r}}_n^{m(n)}, \tilde{A}_n^{m(n)})- (\tilde{\vec{w}}_n, \tilde{\vec{\r}}_n, \tilde{A}_n)\lVert_{Y^{k+2}(T)}=0 \textrm{ as $n \rightarrow + \infty$.}     \notag
\end{equation}

\subsubsection{\textbf{Extension technique}}\label{subsec-436}
By taking ${\vec{w}}_{n}^{m(n)}=\tilde{\vec{w}}_{n}+\vec{\mu}_{n}^{m(n)}$, $\vec{\r}_n^{m(n)}= \tilde{\vec{\r}}_n^{m(n)}$ and $A_n^m= \tilde{A}_n^{m(n)}$, we get that in $ [0,T]\times\TT$
\begin{equation}\label{S2eq21}
\left\{\begin{array}{l}
\displaystyle  \pa_t{\r}_{0,n}^{m(n)}+\pa_x\bigl({w}_{0,n}^{m(n)} {\r}_{0,n}^{m(n)}\bigr)=0,\\
\displaystyle \pa_t{w}_{0,n}^{m(n)}+\frac12\pa_x\left({w}_{0,n}^{m(n)}\right)^2
+\pa_x{\r}_{0,n}^{m(n)}=\bar{\eta}_0+ \pa_t\mu_{0,n}^{m(n)},\\
\displaystyle \pa_t{A}_n^{m(n)}+{w}_{0,n}^{m(n)}\pa_xA_n^{m(n)}+2\pa_x{w}^{m(n)}_{0,n}A_n^{m(n)}=0,\\
\displaystyle \pa_t{\r}^{m(n)}_{1,n}+\pa_x\left({w}_{0,n}^{m(n)} {\r}_{1,n}^{m(n)}+{w}^{m(n)}_{1,n}{\r}_{0,n}^{m(n)}\right)=\pa_xA_n^{m(n)},
 \\
 \displaystyle  \pa_t{w}^{m(n)}_{1,n}+\pa_x\left({w}^{m(n)}_{0,n}{w}_{1,n}^{m(n)}\right)+\pa_x{\r}_{1,n}^{m(n)}
 =\bar{\eta}_1+ \pa_t\mu_{0,n}^{m(n)},
 \\
  \displaystyle {{\vec \r}}_n^{m(n)}(0)={\vec g},\quad A_n^{m(n)}(0)=A_0\andf {{\vec u}}_n^{m(n)}(0)={\vec v}.
\end{array}\right.
\end{equation}
Hence, as $n$ tends to $+\infty$,
\begin{equation}
(\vec{w}_n^{m(n)}, \vec{\r}^{m(n)}_n, A^{m(n)}_n)\eqdefa \mathcal{R} \bigl(\vec{v}, \vec{g}, A_0, 0, 0, \bar{\vec{\eta}}+ \pa_t \vec{\mu}^{m(n)}_n\bigr)\in Y^{k+2}(T),     \notag
\end{equation}
\begin{equation}
(\vec{w}_n^{m(n)}, \vec{\r}^{m(n)}_n, A^{m(n)}_n)(T)= (\tilde{\vec{w}}_n^{m(n)}, \tilde{\vec{\r}}^m_n, \tilde{A}^{m(n)}_n)(T)\rightarrow (\vec{u}, \vec{\r}, A)(T) \textrm{ in } Z^k.            \notag
\end{equation}

\section{Semiclassical limit of the controlled system }
In the end, this section is devoted to the proof of Theorem \ref{thm-sem-main}.

\begin{proof}[Proof of Theorem \ref{thm-sem-main}]
The main idea of the proof basically follows from \cite{1998-Grenier-PAMS}, which we outline as the following three steps:

\noindent$\bullet$ {\bf Step 1.}  \textit{There exists $T_0>0$ such that \eqref{S1eq1}--\eqref{cond-reg-be} have solutions which are uniformly bounded for $a^{\hbar}$ in $C([0, T_0]; H^{k})$  and for $S^{\hbar}$ in $C([0, T_0]; H^{k+1})$.}

At first we study the system of $u^{\hbar}$ and $(a^{\rm r, \hbar}, a^{\rm i, \hbar})$:
\begin{equation}\label{sys-u-a}
\left\{\begin{array}{l}
\displaystyle \pa_ta^{\rm r, \hbar}+u^{ \hbar}\pa_xa^{\rm r, \hbar}+\frac12a^{\rm r, \hbar}\pa_xu^{\hbar}=-\frac{\hbar}{2} \pa_x^2 a^{\rm i, \hbar},\\
\displaystyle \pa_ta^{\rm i, \hbar}u^{ \hbar}\pa_xa^{\rm i, \hbar}+\frac12a^{\rm i, \hbar}\pa_x u^{\hbar}= \frac{\hbar}{2} \pa_x^2 a^{\rm r, \hbar},\\
\displaystyle \pa_t u^{\hbar}+u^{\hbar}\pa_x u^{\hbar}+2a^{\rm r, \hbar}\pa_x a^{\rm r, \hbar}+ 2a^{\rm i, \hbar}\pa_x a^{\rm i, \hbar}=-\eta^{\hbar}(t, x),
 \\
\displaystyle (a^{\rm r, \hbar},a^{\rm i, \hbar},u^{\hbar})|_{t=0}=\left({\rm  Re} (a^0(\hbar)), {\rm Im} (a^0(\hbar)),\pa_xS\right).
\end{array}\right.
\end{equation}
This system can be written in the form
\begin{equation}\label{eq-w-im}
\pa_t w^{\hbar}+ A(w^{\hbar}) \pa_x w^{\hbar}= \hbar H(w^{\hbar})+ f^{\hbar},
\end{equation}
\begin{gather}
w^{\hbar}=
\begin{pmatrix}
a^{r, \hbar}  \\
a^{i, \hbar} \\
u^{\hbar}
\end{pmatrix},\;\;\;   A(w^{\hbar})=
\begin{pmatrix}
u^{\hbar} & 0 & \frac{1}{2} a^{\rm r, \hbar}  \\
0 & u^{\hbar} &  \frac{1}{2} a^{\rm i, \hbar} \\
2a^{\rm r, \hbar} & 2a^{\rm i, \hbar} & u^{\hbar}
\end{pmatrix}, \\
H(w^{\hbar})=
\begin{pmatrix}
-\frac{1}{2} \pa_x^2 a^{\rm i, \hbar}  \\
\frac{1}{2} \pa_x^2 a^{\rm r, \hbar} \\
0
\end{pmatrix}, \;\;\;
f^{\hbar}(t, x)= f_0(t, x)+ \hbar f_1(t, x)=
\begin{pmatrix}
0  \\
0 \\
-\eta^{\hbar}(t, x)
\end{pmatrix}.
\end{gather}
We observe that this system shares the following properties.
\begin{itemize}
\item[(i)] The maps $w\mapsto A(w)$ and $w\mapsto H(w)$ are linear.
\item[(ii)]  The left hand side of \eqref{eq-w-im} is symmetrizable with diagonal matrix $A_0:= \rm{diag} (1, 1, 1/4)$.
\item[(iii)]  $H(w^{\hbar})$ has an Hamiltonian structure.
\item[(iv)]  $f^{\hbar}(t, x)$ is uniformly bounded in $C([0, T]; H^{k+4})$.
\end{itemize}
Therefore, for $\alpha\leq k$ classical energy estimates lead to
\beq\label{ene-w}
\begin{split}
\pa_t(A_0 \pa_x^{\alpha} w^{\hbar}, \pa_x^{\alpha} w^{\hbar})
=& 2 \hbar (A_0 H(\pa_x^{\alpha} w^{\hbar}), \pa_x^{\alpha} w^{\hbar})\\
&- 2(A_0 \pa_x^{\alpha} (A(w^{\hbar}) \pa_x w^{\hbar}), \pa_x^{\alpha} w^{\hbar})+ 2(A_0 \pa_x^{\alpha} f^{\hbar}, \pa_x^{\alpha} w^{\hbar}).
\end{split} \eeq
Thanks to the special structure of $H(w^{\hbar})$, an integration by parts gives
\begin{equation}
(A_0 H(\pa_x^{\alpha} w^{\hbar}), \pa_x^{\alpha} w^{\hbar})=0.  \notag
\end{equation}
Moreover, since $\pa_x^{k} f^{\hbar}(t, x)$ are uniformly bounded on $[0, T]\times \mathbb{T}$ and using the fact that $A_0$ is a symmetrizer for \eqref{eq-w-im}, classical estimates  on the right hand side of  \eqref{ene-w} lead to
\begin{equation}
\pa_t \sum_{\alpha \leq k} (A_0 \pa_x^{\alpha} w^{\hbar}, \pa_x^{\alpha} w^{\hbar}) \leq C  \lVert w^{h} \lVert_{C^1}  \lVert w^{h} \lVert_{H^k}^2  + C \lVert w^{h} \lVert_{H^k},   \notag
\end{equation}
which $C>0$ is a constant independent of $\hbar\in (0,1]$.  As the initial data are uniformly bounded in $H^k$ (see \eqref{cond-reg-be}),  by  Gronwall's lemma we get the existence of  $T_0$ such that $a^{\hbar}, u^{\hbar}$ are uniformly bounded in $C([0, T_0]; H^{k})$.

Next, we solve equation \eqref{S1eq5} via \eqref{sys-u-a} and $u^{\hbar}= \pa_x S^{\hbar}$.  Then it suffices to prove that  $\int_{\mathbb{T}} S^{\hbar}(t)$ are uniformly bounded on $[0, T_0]$.  We know from  \eqref{S1eq5} that
\begin{equation}
 \pa_t S^\hbar+\frac12 (u^{\hbar})^2+f^\hbar(t, x)+|a^\hbar|^2-1=0.   \notag
\end{equation}
Since $\int_{\mathbb{T}} f^{\hbar}(t)=0$ and $\int_{\mathbb{T}} \frac12 (u^{\hbar})^2+ |a^\hbar|^2-1$ are uniformly bounded, this concludes the proof.\\

\noindent$\bullet$ {\bf Step 2.}  \textit{For $\hbar$ small enough,  the solutions $(a^{\hbar}, S^{\hbar})$ are uniformly bounded in $C([0, T]; H^{k}\times H^{k+1})$.  Moreover, $(a^{\hbar}, S^{\hbar}) \textrm{ tends to } (a_0, S_0) \textrm{  in } C([0, T]; H^k\times H^{k+1})$ as $\hbar\rightarrow 0^+$.}

It is known from the assumption of Theorem \ref{thm-sem-main} that \eqref{S1eq6a} admits a solution $(\r_0, u_0)$ in $C([0, T]; H^{k})$.   At first, by denoting $w^T(t):= \left(a^r_0(t), a^i_0(t), u_0(t)\right)$  we prove that  the zeroth order  limit system \eqref{S1eq6b},
\begin{gather}\label{w-lim}
\pa_t w+ A(w) \pa_x w=  f_0,\\
w(0)=\displaystyle (a_0^{\rm r},a_0^{\rm i},u_0)|_{t=0}=({\rm Re}(a_0^0), {\rm Im}(a_0^0),\pa_xS),
\end{gather}
 has a solution $w_0\in C([0, T]; H^{k+2})$.  Indeed, suppose that the maximal solution is on the interval $[0, s)$ with $s<T$, then $\r_0, u_0$ are bounded in $ C([0, s]; H^{k+2})$.  We know from  \eqref{S1eq6b} that $a_0$ belongs to $C([0, s]; H^{k+1})$, which is in contradiction of the definition of $s$.

 Let us define $v^{\hbar}= w^{\hbar}- w_0$. Then comparing \eqref{w-lim} and \eqref{eq-w-im} we get
 \begin{gather}\label{v-hbar}
 \pa_t v^{\hbar}+ \big( A(v^{\hbar})+ A(w_0)\big) \pa_x v^{\hbar}+ A(v^{\hbar})\pa_x w_0= \hbar \big( H(w_0)+ H(v^{\hbar})+ f_1\big),\\
v^{\hbar}(0)= \left({\rm Re}(a^0(\hbar))- {\rm Re} (a_0^0),  {\rm Im}(a^0(\hbar))- {\rm Im}(a_0^0), 0\right),
 \end{gather}
As $w_0$ belongs to  $C([0, T]; H^{k+2})$, we get that
 \begin{equation}
 \mid (\hbar H(\pa_x^{\alpha} w), \pa_x^{\alpha} v^{\hbar}) \mid \leq \hbar C  \lVert w_0\lVert_{H^{k+2}} \lVert v^{\hbar}\lVert_{H^{k}},  \forall \alpha\leq k,    \notag
 \end{equation}
which together with similar energy estimates as in the previous step lead to
\begin{equation}
\pa_t \sum_{\alpha \leq k} (A_0 \pa_x^{\alpha} v^{\hbar}, \pa_x^{\alpha} v^{\hbar}) \leq C  \lVert v^{h} \lVert_{H^k}^2 ( \lVert v^{h} \lVert_{C^1} +1) +  \hbar C \lVert v^{\hbar}\lVert_{H^{k}} + \hbar C \lVert v^{h} \lVert_{H^k}.    \notag
\end{equation}
Since $\lVert v^{\hbar}(0)\lVert_{H^k}$ tends to 0 as $\hbar$ tends to 0,  we are able to find positive $\hbar_0$ such that
\begin{equation}\label{v-h-bound}
\lVert v^{\hbar}\lVert_{C([0, T]; H^k)}\leq C \hbar+ C \lVert v^{\hbar}(0)\lVert_{H^k}\leq C \hbar, \forall 0< \hbar< \hbar_0.    \notag
\end{equation}
Therefore,  the solutions $(a^h, u^h)$ converge uniformly to $(a_0, u_0)$ in $C([0, T]; H^k \times H^k)$ as $\hbar \rightarrow 0^+$.  Meanwhile we know that $S^h$ (resp. $S_0$) satisfies \eqref{S1eq5} (resp. \eqref{S1eq6}), thus direct calculations as above show that
\begin{equation}
\int_{\mathbb{T}} S^{\hbar}(t)\rightarrow \int_{\mathbb{T}} S_0(t) \textrm{ in } C([0, T])\textrm{ as } \hbar\rightarrow 0^+.  \notag
\end{equation}

Hence
\begin{equation}
(a^{\hbar}, S^{\hbar}) \textrm{ tends to } (a_0, S_0) \textrm{  in } C([0, T]; H^k\times H^{k+1}) \textrm{ as } \hbar\rightarrow 0^+.  \notag
\end{equation}
\\
\noindent$\bullet$ {\bf Step 3.} \textit{Let $\tilde{S}^{\hbar}= (S^{\hbar}-S_0)/\hbar, \tilde{v}^{\hbar}= v^{\hbar}/\hbar$. Then $(\tilde{a}^{\hbar}, \tilde{S}^{\hbar})$ tends to $(a_1, S_1)$ in $C([0, T]; H^{k-2}\times H^{k-1})$.}

We deduce from \eqref{v-hbar} and the last equation of \eqref{sys-u-a} that
\begin{gather}\label{v-til-1}
\pa_t \tilde{v}^{\hbar}+ \big( \hbar A(\tilde{v}^{\hbar})+ A(w_0) \big) \pa_x \tilde{v}^{\hbar}+ A(\tilde{v}^{\hbar}) \pa_x w_0= H(w_0)+ \hbar H(\tilde{v}^{\hbar})+ f_1,\\
\tilde{v}^{\hbar}(0)= (\rm{Re} (a^0_1+ R(\hbar)), \rm{Im} (a^0_1+ R(\hbar)), 0).\label{v-til-2}
\end{gather}
The limit system of \eqref{v-til-1}--\eqref{v-til-2} reads
\begin{gather}\label{v-til-1-lim}
\pa_t \tilde{v}+ A(w_0)  \pa_x \tilde{v}+ A(\tilde{v}^{\hbar}) \pa_x w_0= H(w_0)+ f_1,\\
\tilde{v}(0)= (\rm{Re } (a^0_1), \rm{Im } (a^0_1), 0),\label{v-til-2-lim}
\end{gather}
which is exactly the first order system \eqref{S1eq7b}. Let us denote the solution of this linear system by $v_1=(a_1^r, a_1^i, u_1)$.

Thanks to \eqref{v-h-bound}, $\tilde{v}^{\hbar}$ are uniformly bounded in $C([0, T]; H^{k})\subset L^{\infty}([0, T]; H^k)$. Then by plugging this bound into \eqref{v-til-1}--\eqref{v-til-2},  we obtain the uniform boundedness of  $\pa_t \tilde{v}^{\hbar}$  in $C([0, T]; H^{k-2})\subset L^{\infty}([0, T]; H^{k-2})$.  Therefore, up to  a subsequence,  $\tilde{v}^{\hbar}$ converges to some function $v'$ in $C([0, T]; H^{k-2})$.  By taking the limit, we find that $v'$ solves the equation \eqref{v-til-1-lim}--\eqref{v-til-2-lim}. Hence $v'$ coincidence with $v_1$.  To this end, we deduce from
\begin{equation}
\pa_t \tilde{S}^{\hbar}+ u_0 \tilde{u}^{\hbar}+ \frac{1}{2} \hbar (\tilde{u}^{\hbar})^2+ F_2+ (\tilde{a}^{\hbar} \bar{a}_0+ \bar{\tilde{a}}^{\hbar} a_0)+ \hbar \mid \tilde{a}^{\hbar} \mid^2=0, \notag
\end{equation}
\begin{equation}
\pa_t S_1+ u_0 u_1+ 2\textrm{Re}(a_0\bar{a}_1)+ F_2=0, \notag
\end{equation}
that
\begin{equation}
\int_{\mathbb{T}} \tilde{S}^{\hbar}(t)\rightarrow \int_{\mathbb{T}} S_1(t) \textrm{ in } C([0, T]) \textrm{ as } \hbar\rightarrow 0^+.  \notag
\end{equation}
Therefore
\begin{equation}
(\tilde{a}^{\hbar}, \tilde{S}^{\hbar}) \textrm{ tends to } (a_1, S_1) \textrm{  in } C([0, T]; H^{k-2}\times H^{k-1}) \textrm{ as } \hbar\rightarrow 0^+.  \notag
\end{equation}
More precisely, we get the first order expansion of $\psi^{\hbar}$:
\begin{gather*}
a^{\hbar}= a_0+ \hbar a_1+ \hbar r_a^{\hbar}, \;\;
S^{\hbar}= S_0+ \hbar S_1+ \hbar r_S^{\hbar},  \\
\lim_{\hbar\rightarrow 0^+} \lVert r_a^{\hbar} \lVert_{C([0, T]; H^{k-2})}=0, \;\; \lim_{\hbar\rightarrow 0^+} \lVert r_S^{\hbar} \lVert_{C([0, T]; H^{k-1})}=0,
\end{gather*}
and complete the proof of Theorem \ref{thm-sem-main}. \end{proof}

\section*{Acknowledgments}

 Part of this work was done when Shengquan Xiang was visiting the  Academy of Mathematics and Systems Sciences (CAS), and when Ping Zhang was visiting Jacques-Louis Lions Laboratory of Sorbonne Universit\'{e}.
   We appreciate the hospitality and the financial support of these institutions.   Shengquan Xiang is financially supported by the Chair of Partial Differential Equations at EPFL. Ping Zhang is partially supported by K.C.Wong Education Foundation and NSF of China under Grants   11731007, 12031006 and 11688101.

\bibliographystyle{plain}
\bibliography{sch}

\begin{thebibliography}{10}

\bibitem{Agrachev-Sarychev-2005-JMFM}
Andrey~A. Agrachev and Andrey~V. Sarychev.
\newblock Navier-{S}tokes equations: controllability by means of low modes
  forcing.
\newblock {\em J. Math. Fluid Mech.}, 7(1):108--152, 2005.

\bibitem{Agrachev-Sarychev-2006-CMP}
Andrey~A. Agrachev and Andrey~V. Sarychev.
\newblock Controllability of 2{D} {E}uler and {N}avier-{S}tokes equations by
  degenerate forcing.
\newblock {\em Comm. Math. Phys.}, 265(3):673--697, 2006.

\bibitem{AC09}
Thomas Alazard and R\'{e}mi Carles.
\newblock Supercritical geometric optics for nonlinear {S}chr\"{o}dinger
  equations.
\newblock {\em Arch. Ration. Mech. Anal.}, 194(1):315--347, 2009.

\bibitem{Beauchard-2005-JMPA9}
Karine Beauchard.
\newblock Local controllability of a 1-{D} {S}chr\"{o}dinger equation.
\newblock {\em J. Math. Pures Appl. (9)}, 84(7):851--956, 2005.

\bibitem{Beauchard-Laurent-2010}
Karine Beauchard and Camille Laurent.
\newblock Local controllability of 1{D} linear and nonlinear {S}chr\"{o}dinger
  equations with bilinear control.
\newblock {\em J. Math. Pures Appl. (9)}, 94(5):520--554, 2010.

\bibitem{1993-Beirao-da-Veiga-CPAM}
H.~Beir\~{a}o~da Veiga.
\newblock Perturbation theorems for linear hyperbolic mixed problems and
  applications to the compressible {E}uler equations.
\newblock {\em Comm. Pure Appl. Math.}, 46(2):221--259, 1993.

\bibitem{coron-1996-euler-return}
Jean-Michel Coron.
\newblock On the controllability of {$2$}-{D} incompressible perfect fluids.
\newblock {\em J. Math. Pures Appl. (9)}, 75(2):155--188, 1996.

\bibitem{116}
Jean-Michel Coron.
\newblock Local controllability of a 1-{D} tank containing a fluid modeled by
  the shallow water equations.
\newblock {\em ESAIM Control Optim. Calc. Var.}, 8:513--554 (electronic), 2002.
\newblock A tribute to J. L. Lions.

\bibitem{coron}
Jean-Michel Coron.
\newblock {\em Control and nonlinearity}, volume 136 of {\em Mathematical
  Surveys and Monographs}.
\newblock American Mathematical Society, Providence, RI, 2007.

\bibitem{1996-Coron-Fursikov-RJMP}
Jean-Michel Coron and Andrei~V. Fursikov.
\newblock Global exact controllability of the {$2$}{D} {N}avier-{S}tokes
  equations on a manifold without boundary.
\newblock {\em Russian J. Math. Phys.}, 4(4):429--448, 1996.

\bibitem{coron:hal-03161523}
Jean-Michel. Coron, Amaury Hayat, Shengquan Xiang, and Christophe Zhang.
\newblock {Stabilization of the linearized water tank system}.
\newblock Preprint, hal-03161523, March 2021.

\bibitem{Nersesyan-2021}
Alessandro Duca and Vahagan Nersesyan.
\newblock Bilinear control and growth of sobolev norms for the nonlinear
  {S}chrödinger equation.
\newblock {\em Preprint, arXiv:2101.12103}, 2021.

\bibitem{MR2103189}
E.~Fern\'{a}ndez-Cara, S.~Guerrero, O.~Yu. Imanuvilov, and J.-P. Puel.
\newblock Local exact controllability of the {N}avier-{S}tokes system.
\newblock {\em J. Math. Pures Appl. (9)}, 83(12):1501--1542, 2004.

\bibitem{2000-Glass-COCV}
Olivier Glass.
\newblock Exact boundary controllability of 3-{D} {E}uler equation.
\newblock {\em ESAIM Control Optim. Calc. Var.}, 5:1--44, 2000.

\bibitem{Glass-2007-JEMS}
Olivier Glass.
\newblock On the controllability of the 1-{D} isentropic {E}uler equation.
\newblock {\em J. Eur. Math. Soc. (JEMS)}, 9(3):427--486, 2007.

\bibitem{1998-Grenier-PAMS}
Emmanuel Grenier.
\newblock Semiclassical limit of the nonlinear {S}chr\"{o}dinger equation in
  small time.
\newblock {\em Proc. Amer. Math. Soc.}, 126(2):523--530, 1998.

\bibitem{SaintVenantPI}
Amaury Hayat.
\newblock {PI controllers for the general Saint-Venant equations}.
\newblock 2021.
\newblock Preprint, arXiv:2108.02703.

\bibitem{Krieger-Xiang-2020}
Joachim Krieger and Shengquan Xiang.
\newblock Boundary stabilization of focusing {N}{L}{K}{G} near unstable
  equilibria: radial case.
\newblock {\em Preprint, arXiv:1911.03661}, 2020.

\bibitem{Laurent-2014}
Camille Laurent.
\newblock Internal control of the {S}chr\"{o}dinger equation.
\newblock {\em Math. Control Relat. Fields}, 4(2):161--186, 2014.

\bibitem{Lebeau-1992}
Gilles Lebeau.
\newblock Contr\^{o}le de l'\'{e}quation de {S}chr\"{o}dinger.
\newblock {\em J. Math. Pures Appl. (9)}, 71(3):267--291, 1992.

\bibitem{2006-Lin-Zhang-ARMA}
Fanghua Lin and Ping Zhang.
\newblock Semiclassical limit of the {G}ross-{P}itaevskii equation in an
  exterior domain.
\newblock {\em Arch. Ration. Mech. Anal.}, 179(1):79--107, 2006.

\bibitem{1927-Madelung-ZP}
Erwin Madelung.
\newblock Quantentheorie in hydrodynamischer form.
\newblock {\em Zeit. f. Phys.}, 40(3--4):322–326, 1927.

\bibitem{2010-Nersisyan-COCV}
Hayk Nersisyan.
\newblock Controllability of 3{D} incompressible {E}uler equations by a
  finite-dimensional external force.
\newblock {\em ESAIM Control Optim. Calc. Var.}, 16(3):677--694, 2010.

\bibitem{2011-Nersisyan-CPDE}
Hayk Nersisyan.
\newblock Controllability of the 3{D} compressible {E}uler system.
\newblock {\em Comm. Partial Differential Equations}, 36(9):1544--1564, 2011.

\bibitem{2012-Sarychev-MCRF}
Andrey Sarychev.
\newblock Controllability of the cubic {S}chroedinger equation via a
  low-dimensional source term.
\newblock {\em Math. Control Relat. Fields}, 2(3):247--270, 2012.

\bibitem{2006-Shirikyan-CMP}
Armen Shirikyan.
\newblock Approximate controllability of three-dimensional {N}avier-{S}tokes
  equations.
\newblock {\em Comm. Math. Phys.}, 266(1):123--151, 2006.

\bibitem{2018-Shirikyan-PAFA}
Armen Shirikyan.
\newblock Control theory for the {B}urgers equation: {A}grachev-{S}arychev
  approach.
\newblock {\em Pure Appl. Funct. Anal.}, 3(1):219--240, 2018.

\bibitem{Xiang-NS-2020}
Shengquan Xiang.
\newblock Small-time local stabilization of the two dimensional incompressible
  {N}avier-{S}tokes equations.
\newblock {\em Preprint, arXiv:2010.13696}, 2020.

\bibitem{2002-Zhang-JPDE}
Ping Zhang.
\newblock Semiclassical limit of nonlinear {S}chr\"{o}dinger equation. {II}.
\newblock {\em J. Partial Differential Equations}, 15(2):83--96, 2002.

\bibitem{2002-Zhang-SIAMJMA}
Ping Zhang.
\newblock Wigner measure and the semiclassical limit of
  {S}chr\"{o}dinger-{P}oisson equations.
\newblock {\em SIAM J. Math. Anal.}, 34(3):700--718, 2002.

\end{thebibliography}

\end{document}